\newtheorem{theorem}{Theorem}[section]
\newtheorem{lemma}[theorem]{Lemma}
\newtheorem{remark}[theorem]{Remark}
\newtheorem{proposition}[theorem]{Proposition}
\numberwithin{equation}{section}
\newcommand{\lbl}[1]{\label{#1}}
\newcommand{\be}{\begin{equation}}
\newcommand{\ee}{\end{equation}}
\newcommand\bes{\begin{eqnarray}} \newcommand\ees{\end{eqnarray}}
\newcommand{\bess}{\begin{eqnarray*}}
\newcommand{\eess}{\end{eqnarray*}}
\newcommand{\bbbb}{\left\{\begin{aligned}}
\newcommand{\nnnn}{\end{aligned}\right.}
\newcommand{\bea}{\begin{align*}}
\newcommand{\eea}{\end{align*}}
\newcommand\ep{\varepsilon}
\newcommand\kk{\left}
\newcommand\rr{\right}
\newcommand\dd{\displaystyle}
\newcommand\dx{{\rm d}x}
\newcommand\dy{{\rm d}y}
\newcommand\lm{\lambda}
\newcommand\nm{\nonumber}
\newcommand\yy{\infty}
\newcommand\qq{\eqref}
\newcommand\ol{\overline}
\newcommand\ud{\underline}
\newcommand\oo{\Omega}
\newcommand\boo{\ol\Omega}
\begin{document}\thispagestyle{empty}

\setlength{\abovedisplayskip}{7pt}
\setlength{\belowdisplayskip}{7pt}

\begin{center}
 {\LARGE\bf Longtime behaviors of a reducible cooperative system}\\[2mm]
 {\LARGE\bf with nonlocal diffusions and free boundaries}\footnote{This work was supported by NSFC Grants
 12301247, 12171120. This paper is not currently submitted to other journals and will not be submitted to other journals during the reviewing process.}\\[4mm]
{\Large Lei Li}\\[1mm]
{\small School of Mathematics and Statistics, Henan University of Technology, Zhengzhou, 450001, China}\\[3mm]

{\Large Mingxin Wang}\footnote{Corresponding author. {\sl E-mail}:  mxwang@hpu.edu.cn}\\[1mm]
{\small School of Mathematics and Information Science, Henan Polytechnic University, Jiaozuo, 454003, China}\end{center}
\date{\today}

\begin{abstract}
This paper aims at understanding the longtime behaviors of a reducible cooperative system with nonlocal diffusions and different free boundaries, describing the interactions of two mutually beneficial species. Compared with the irreducible and monostable cooperative system, the system we care about here has many nonnegative steady states, leading to much different and rich longtime behaviors. Moreover, since the possible nonnegative steady states on half space are non-constant, we need to employ more detailed analysis to understand the corresponding steady state problems which in turn helps us to derive a complete classification for the longtime behaviors of our problem. The spreading speeds of free boundaries and more accurate limits of $(u,v)$ as $t\to\infty$ are also discussed, and accelerated spreading can happen if some threshold conditions are violated by kernel functions.

\textbf{Keywords}: Nonlocal diffusion; free boundaries; spreading speed; longtime behaviors.

\textbf{AMS Subject Classification (2000)}: 35K57, 35R09,
35R20, 35R35, 92D25
\end{abstract}

\pagestyle{myheadings}
\section{Introduction}
{\setlength\arraycolsep{2pt}
\markboth{\rm$~$ \hfill A reducible cooperative system with nonlocal diffusions\hfill $~$}{\rm$~$ \hfill L. Li \& M.X. Wang\hfill $~$}
To describe the propagation of an invasive species which adopts a nonlocal diffusion strategy, Cao et al \cite{CDLL} proposed the following free boundary problem with nonlocal diffusion
\bes\label{1.5}\left\{\begin{aligned}
&u_t=d\int_{-\yy}^{\yy}J(x-y)u(t,y)\dy-du+f(u), & &t>0,~g(t)<x<h(t),\\
&u(t,x)=0,& &t>0, ~ x\notin(g(t),h(t)),\\
&h'(t)=\mu\int_{g(t)}^{h(t)}\!\!\int_{h(t)}^{\infty}
J(x-y)u(t,x)\dy\dx,& &t>0,\\
&g'(t)=-\mu\int_{g(t)}^{h(t)}\!\!\int_{-\infty}^{g(t)}
J(x-y)u(t,x)\dy\dx,& &t>0,\\
&h(0)=-g(0)=h_0>0,\;\; u(0,x)=u_0(x),& &x\in[-h_0,h_0],
 \end{aligned}\right.
 \ees
where $J$ satisfies
  \begin{enumerate}
\item[{\bf(J)}]$J\in C(\mathbb{R})\cap L^{\yy}(\mathbb{R})$, $J\ge 0$, $J(0)>0,~\dd\int_{\mathbb{R}}J(x)\dx=1$, \ $J$\; is even.
 \end{enumerate}
The reaction term $f(u)$ is of the Fisher-KPP type, i.e., $f\in C^1$, $f(0)=0<f'(0)$, $f(u)/u$ is strictly decreasing in $u>0$ and $f(u_*)=0$ for some $u_*>0$. The authors showed \eqref{1.5} is well-posed, and its longtime behaviors are governed by a spreading-vanishing dichotomy. When spreading happens, the spreading speed was studied by Du et al. \cite{DLZ}. They found that the spreading speed may be infinite, which is quite different from the local diffusion case (\cite{DL2010}). More precisely, it was proved in \cite{DLZ} that \eqref{1.5} has a unique finite spreading speed if and only if the kernel function $J$ satisfies
 \begin{enumerate}
   \item[{\bf(J1)}] $\dd\int_{0}^{\yy}xJ(x)\dx<\yy$.
 \end{enumerate}
The case where the spreading speed is infinite is usually called accelerated spreading which has been systematically investigated by Du and his collaborators \cite{DN1, DN2, DN3}. Among other things, the exact rate of accelerated spreading was recently derived by Du and Ni \cite{DN2}, in which a new method for constructing the suitable upper and lower solutions was put forward. For more development on the investigation of free boundary problem with nonlocal diffusion from ecology and epidemiology, for example, please refer to \cite{DN20}-\cite{LW24} or the expository article \cite{Du22}.

Motivated by the above work and to investigate the interactions of two mutually beneficial species, we in this paper investigate the following cooperative system with nonlocal diffusions and different free boundaries
\bes\left\{\begin{aligned}
&u_t=d_1\int_{0}^{\yy}J_1(x-y)u(t,y)\dy-d_1u+f_1(u,v), & &t>0,~x\in[0,s_1(t)),\\
&v_t=d_2\int_{0}^{\yy}J_2(x-y)v(t,y)\dy-d_2v+f_2(u,v), & &t>0,~x\in[0,s_2(t)),\\
&u(t,x)=0,~ x\in[s_1(t),\yy); ~ ~ ~ v(t,x)=0, ~ x\in[s_2(t),\yy), & &t>0,\\
&s'_1(t)=\mu_1\int_{0}^{s_1(t)}\!\!\int_{s_1(t)}^{\infty}
J_1(x-y)u(t,x)\dy\dx,& &t>0,\\
&s'_2(t)=\mu_2\int_{0}^{s_2(t)}\!\!\int_{s_2(t)}^{\infty}
J_2(x-y)v(t,x)\dy\dx,& &t>0,\\
&s_i(0)=s_{i0}>0,\\
&u=u_0(x),\;x\in[0,s_{10}];\;\;\;v=v_0(x),~ x\in[0,s_{20}], & & t=0,
 \end{aligned}\right.
 \label{1.1}\ees
where
 \[f_1(u,v)=r_1u\kk(a-u-\frac{u}{1+bv}\rr),\;\;\;f_2(u,v)=r_2v\kk(1-v-\frac{v}{1+qu}\rr),\]
and all parameters are positive. The initial functions $u_0$ and $v_0$ satisfy that $u_0\in C([0,s_{10}])$, $u_0(s_{10})=0<u_0(x)$ in $[0,s_{10})$, $v_0\in C([0,s_{20}])$ and $v_0(s_{20})=0<v_0(x)$ in $[0,s_{20})$. The kernel functions $J_i$ for $i=1,2$ satisfy the condition {\bf(J)}.

This model can be used to describe such a scenario in which the spatial movements of two species $u$ and $v$ are approximated by the integral operators, also known as nonlocal diffusion operators (see \cite{AMRT} for the biological interpretation), both $u$ and $v$ can across the fixed boundary $x=0$ but they will die immediately once they do it (corresponding to the homogeneous Dirichlet boundary conditions, see \cite{LLW22} for the explanation), and there is a cooperative relationship between $u$ and $v$.

There are three main differences between the model \qq{1.1} and free boundary problems of nonlocal diffusive cooperation systems studied in literature \cite{DN20, ZLW20, ZZLW20, DuNi22, CDu22, NV, WDu22} for epidemiological model (oral-fecal transmitted epidemic) and \cite{PLL23} for West Nile virus model. The first one is that the positive steady states of \qq{1.1} are non-constant, while those of the models studied in the above cited works is constant. The second one is that the model \qq{1.1} is reducible at $(0,0)$, but the models in \cite{DN20, ZLW20, ZZLW20, DuNi22, CDu22, NV, WDu22, PLL23} are irreducible at $(0,0)$. The last one is that the species $u$ and $v$ of \eqref{1.1} have their own habitats $[0,s_1(t))$ and $[0,s_2(t))$ that allows the existence  of a spatial area where only one species survives, while the species in the above cited works share the same habitat. In this paper, one will see that these differences bring about quite different dynamics.

The version of \eqref{1.1} with local diffusions, advection term and different free boundaries has been studied in \cite{LL, ZW, CT}. The well-posedness, longtime behaviors, criteria for spreading and vanishing, and estimates for spreading speed were obtained there. It is easy to see that in these existing literature, the possible positive steady states are constant, which eliminates the need for ones to investigate the corresponding steady state problems and also makes the iteration method effective for studying the longtime behaviors. However, for \eqref{1.1}, since the positive steady states are non-constant, the usual iteration method is invalid and we also need to utilize more subtle analysis to learn as many as possible about the steady state problems.

This paper is arranged as follows. Section 2 involves some preparatory results, mainly focused on the corresponding steady state problems on bounded interval and half space, respectively. Section 3 is devoted to the longtime behaviors of \eqref{1.1}. When spreading happens, the more accurate estimates for solution component $(u,v)$ are obtained in Section 4, in which we give a useful technical lemma and modify the existing iteration method. Finally, a discussion including some unsolved and interesting problems are given.

\section{Some preliminaries}
{\setlength\arraycolsep{2pt}

In this section, we give some preparatory results, including a principal eigenvalue problem, a free boundary problem, some evolving problems defined on bounded domain and some steady state problems. These results will pave the road for our later discussions.

Let $\oo=[l_1,l_2]$ with $l_1<l_2\in\mathbb{R}$, $P$ satisfy {\bf (J)} and constants $d, \alpha$ be positive. It is well-known (see e.g. \cite[Proposition 3.4]{CDLL}) that eigenvalue problem
 \bes\label{2.1}
d\int_\oo P(x-y)\phi(y)\dy-d\phi+\alpha\phi=\lambda\phi, ~ ~ x\in\oo,
\ees
has a unique principal eigenvalue $\lambda(d,P,\alpha,\oo)$. Moreover, $\lambda(d,P,\alpha,\oo)$ depends only on the length of interval $\oo$, $\lambda(d,P,\alpha,\oo)$ is continuous in $\oo$ and strictly increasing in $|\oo|$, and
 \bes
 \lim_{|\oo|\to0}\lambda(d,P,\alpha,\oo)=\alpha-d,\;\;\;\mbox{ and }\;\; \lim_{|\oo|\to\yy}\lambda(d,P,\alpha,\oo)=\alpha.
 \lbl{x2.2}\ees

\begin{proposition}[{\cite[Theorem 1.2]{DLZ}}]\label{p2.2}Let $P$ satisfy {\bf (J)} and constant $\beta$ be positive. Then semi-wave problem
\begin{eqnarray}\label{2.2}\left\{\begin{array}{lll}
 d\displaystyle\int_{-\infty}^{0}P(x-y)\phi(y){\rm d}y-d\phi+c\phi'+\phi(\alpha-\beta\phi)=0,\;\; \quad  x\in(-\yy,0),\\[3mm]
\phi(-\infty)=\dd\alpha/\beta,\ \ \phi(0)=0, \ \ c=\mu\int_{-\infty}^{0}\int_0^{\infty}\!P(x-y)\phi(x){\rm d}y{\rm d}x
 \end{array}\right.
 \end{eqnarray}
 has a unique solution pair $(c,\phi_c)$ with
  \bes
  c=c(d,P,\alpha,\beta,\mu)>0
  \lbl{x2.4}\ees
and $\phi_c'<0$ for $x\le0$ if and only if $P$ satisfies {\bf (J1)}.
\end{proposition}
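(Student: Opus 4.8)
The strategy is to decouple \eqref{2.2} into a one-parameter family of profile problems indexed by the speed $c$ together with a scalar identity that fixes $c$, and to read off {\bf(J1)} from that identity; a harmless rescaling of $x$ and of the unknown lets one assume $\ap=\beta$, but I keep $\ap,\beta$ general below. For the \emph{necessity} of {\bf(J1)}, suppose $(c,\phi_c)$ solves \eqref{2.2} with $c>0$ and $\phi_c'<0$ on $(-\yy,0]$. Interchanging the order of integration in the flux term and using $\int_0^\yy P(x-y)\dy=\int_{|x|}^{\yy}P(z)\,{\rm d}z$ for $x<0$, the relation defining $c$ becomes
\[
\frac{c}{\mu}=\int_{-\yy}^{0}\phi_c(x)\int_{|x|}^{\yy}P(z)\,{\rm d}z\,\dx .
\]
Since $\phi_c$ is strictly decreasing with $\phi_c(0)=0$ and $\phi_c(-\yy)=\ap/\beta$, it is bounded below by a positive constant on $(-\yy,x_0]$ for each fixed $x_0<0$; hence finiteness of the right-hand side forces $\int_{-\yy}^{x_0}\int_{|x|}^{\yy}P(z)\,{\rm d}z\,\dx<\yy$, which by Fubini differs by a finite amount from $\int_0^\yy zP(z)\,{\rm d}z$, so {\bf(J1)} holds. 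The very same computation shows that for \emph{any} admissible monotone profile the double integral in the flux term is bounded above by $(\ap/\beta)\int_0^\yy zP(z)\,{\rm d}z$, uniformly in $c$; this is the only place where {\bf(J1)} enters the constructive direction.

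For the \emph{sufficiency}, assume {\bf(J1)} and fix $c>0$. I would first solve the profile problem
\[
d\int_{-\yy}^0 P(x-y)\phi(y)\dy-d\phi+c\phi'+\phi(\ap-\beta\phi)=0\quad(x<0),\qquad \phi(0)=0,\ \ \phi(-\yy)=\ap/\beta,
\]
by approximation on bounded intervals $\oo_l=(-l,0)$: for $l$ large one has $\lambda(d,P,\ap,\oo_l)>0$ by \eqref{x2.2}, which (with the transport term treated as a perturbation) yields a small positive strict subsolution built from the corresponding principal eigenfunction, while the constant $\ap/\beta$ is a supersolution, so a monotone iteration produces a solution $\phi^c_l$ on $\oo_l$. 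A sliding/comparison argument and the strong maximum principle for the operator $\phi\mapsto d\int P\phi-d\phi+c\phi'$ give $(\phi^c_l)'<0$ and monotonicity of $\phi^c_l$ in $l$, so $\phi^c:=\lim_{l\to\yy}\phi^c_l$ exists and solves the profile problem, and letting $x\to-\yy$ in the equation (using that $\phi^c$ is monotone and stays above a positive constant) pins $\phi^c(-\yy)=\ap/\beta$. The KPP structure (strict decrease of $u\mapsto\ap-\beta u$) gives uniqueness of $\phi^c$, and comparing the profiles of two speeds shows $c\mapsto\phi^c$ is continuous and strictly decreasing.

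It then remains to choose $c$ satisfying the flux identity. Set $G(c)=\mu\int_{-\yy}^0\int_0^\yy P(x-y)\phi^c(x)\dy\,\dx-c$; then $G(0)>0$ since $\phi^0>0$ on $(-\yy,0)$, whereas the uniform bound above gives $G(c)\le\mu(\ap/\beta)\int_0^\yy zP(z)\,{\rm d}z-c$, which is negative for $c$ large (and if the range of admissible speeds is bounded, a limiting analysis at its endpoint gives the same sign change). Since $c\mapsto\int_{-\yy}^0\int_0^\yy P(x-y)\phi^c(x)\dy\,\dx$ is continuous and non-increasing, $G$ is continuous and strictly decreasing, hence has exactly one zero $c=c(d,P,\ap,\beta,\mu)>0$; with $\phi_c:=\phi^c$ this is the unique solution pair. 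The main obstacle is the fixed-speed profile construction and, above all, its strictly monotone and continuous dependence on $c$: nonlocal operators have no regularizing effect, so continuity in $c$ requires uniform tail and equicontinuity estimates together with a diagonal limit, and strict monotonicity in $c$ requires a genuine strong maximum principle for the nonlocal transport operator on the half-line; necessity of {\bf(J1)}, by contrast, is settled cleanly by the flux identity displayed above.
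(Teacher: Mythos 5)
The paper cites this as [DLZ, Theorem 1.2] without reproducing the argument, so the relevant comparison is to that reference. Your \emph{necessity} direction is correct and matches the standard argument there: after Fubini, $c/\mu=\int_{-\infty}^{0}\phi_c(x)\int_{|x|}^{\infty}P(z)\,{\rm d}z\,{\rm d}x$, and since a monotone profile with $\phi_c(-\infty)=\alpha/\beta$ is bounded below by a positive constant on $(-\infty,x_0]$, finiteness of the flux forces $\int_0^\infty zP(z)\,{\rm d}z<\infty$, i.e.\ {\bf(J1)}.

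The \emph{sufficiency} direction, however, has a genuine gap at its very first step. You take for granted that for every $c>0$ the fixed-speed profile problem with $\phi(0)=0$, $\phi(-\infty)=\alpha/\beta$, $\phi$ monotone has a nontrivial solution, produced by truncation to $(-l,0)$ with an eigenfunction sub-solution and ``the transport term treated as a perturbation.'' This is false when {\bf(J2)} holds: the content of [DLZ, Theorem 1.1] is that such a monotone profile exists \emph{if and only if} $0\le c<C_*$, where $C_*$ is the minimal traveling-wave speed of Proposition~\ref{p2.3} (finite under {\bf(J2)}, infinite otherwise). Mechanically, a positive nonincreasing sub-solution has $\phi'<0$, so $c\phi'$ contributes a strictly negative term growing linearly in $c$, while the eigenvalue gap $\lambda(d,P,\alpha,\oo_l)$ stays bounded; for $c$ large the sub-solution inequality cannot be rescued, the truncated profiles $\phi^c_l$ collapse as $l\to\yy$, and the step where you ``pin $\phi^c(-\infty)=\alpha/\beta$'' is exactly where this surfaces --- the clause ``stays above a positive constant'' is the assertion that requires the threshold, not a consequence of the construction. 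Your parenthetical about the range of admissible speeds being bounded acknowledges the possibility, but you neither identify why that range is $[0,C_*)$ nor carry out the endpoint analysis (one needs $\phi^c\to 0$ as $c\to C_*^-$, so by dominated convergence the flux tends to $0$ and $G(C_*^-)=-C_*<0$). Establishing the threshold $C_*$, the nontrivial existence range, and the continuous strict monotonicity of $c\mapsto\phi^c$ is the substantive core of [DLZ, Theorems 1.1--1.2], not a perturbative technicality. Once that scaffolding is in place, your intermediate-value argument on $G$, including the uniform bound $\mu(\alpha/\beta)\int_0^\infty zP(z)\,{\rm d}z$ for the case $C_*=\infty$, is sound.
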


Moreover, we recall some known results about the traveling wave solution of the Fisher-KPP equation with nonlocal diffusion. The following ``thin tailed'' condition is crucial.
\begin{enumerate}
   \item[{\bf(J2)}] $\dd\int_{0}^{\yy}e^{\lambda x}P(x)\dx<\yy$ for some $\lambda>0$.
 \end{enumerate}

\begin{proposition}[{\cite[Proposition 1.1]{CC}}, {\cite[Theorems 1 and 2]{Ya}} and {\cite[Theorems 5.1 and 5.2]{DLZ}}]\label{p2.3}Let $P$ satisfy {\bf (J)} and constant $\beta$ be positive. Then the following statements are valid.
\begin{enumerate}[$(1)$]
  \item If $P$ satisfies {\bf (J2)}, then there exists a constant $C_*>0$ such that problem
  \begin{eqnarray}\label{2.3}\left\{\begin{array}{lll}
 d\displaystyle\int_{-\infty}^{\yy}P(x-y)\phi(y){\rm d}y-d\phi+c\phi'+\phi(\alpha-\beta\phi)=0,\;\;\; x\in\mathbb{R},\\[3mm]
\phi(-\infty)=\alpha/\beta,\ \ \phi(\yy)=0
 \end{array}\right.
 \end{eqnarray}
 has a traveling wave solution if and only if $c\ge C_*$. The constant $C_*$ is usually called the minimal speed of \eqref{2.3} and can be formulated by
   \[C_*=\min_{\lambda>0}\frac{1}{\lambda}\left(d\int_{-\yy}^{\yy}P(x)e^{\lambda x}\dx-d+\alpha\right).\]
 \item If $P$ satisfies {\bf (J2)}, then the speed $c$ of semi-wave problem \eqref{2.2} will converge to $C_*$ as $\mu\to\yy$.
 \item If $P$ violates {\bf (J2)} but satisfies {\bf (J1)}, then the speed $c$ of semi-wave problem \eqref{2.2} will converge to $\yy$ as $\mu\to\yy$.
\end{enumerate}
\end{proposition}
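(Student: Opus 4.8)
Since the three assertions are recalled from \cite{CC,Ya,DLZ}, we only outline the structure of the arguments. For part $(1)$, the plan is to analyse the linearisation of \eqref{2.3} at the unstable equilibrium $\phi\equiv0$. Inserting the exponential ansatz $\phi(x)=e^{-\lambda x}$ with $\lambda>0$ into the linearised operator and using that $P$ is even, one is led to the characteristic identity $c\lambda=d\int_{\R}P(z)e^{\lambda z}{\rm d}z-d+\alpha$; condition {\bf(J2)} is precisely what makes the right-hand side finite for small $\lambda>0$, and since $\frac1\lambda\big(d\int_{\R}P(z)e^{\lambda z}{\rm d}z-d+\alpha\big)$ tends to $+\yy$ as $\lambda\to0^+$ and blows up (or leaves the feasible range of $\lambda$) as $\lambda$ increases, this function attains a positive minimum $C_*$ on $(0,\yy)$, which gives the stated formula. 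For $c\ge C_*$ a monotone traveling wave is then built by the sub/supersolution method: the supersolution is a truncated exponential $\min\{\alpha/\beta,\,e^{-\lambda x}\}$ with $\lambda$ a root of the characteristic identity, the subsolution a perturbed exponential of the form $\max\{0,\,e^{-\lambda x}-Me^{-(\lambda+\eta)x}\}$, and a monotone iteration between them (or a fixed point on a bounded window followed by a limit) produces a solution; the strong maximum principle then upgrades this to a strictly decreasing profile with $\phi(-\yy)=\alpha/\beta$ and $\phi(\yy)=0$. Nonexistence for $c<C_*$ follows from a sliding/comparison argument forcing any bounded nonnegative solution to decay exponentially at $+\yy$ with a rate solving the characteristic identity, which is impossible when $c<C_*$.

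For part $(2)$, observe first that {\bf(J2)} implies {\bf(J1)}, so by Proposition \ref{p2.2} the semi-wave speed $c=c(d,P,\alpha,\beta,\mu)$ is well defined and positive for each $\mu>0$. One shows $\mu\mapsto c(\mu)$ is nondecreasing (a larger $\mu$ only strengthens the free-boundary forcing, giving a comparison between two semi-waves) and bounded above by $C_*$ (a comparison with the traveling waves of \eqref{2.3}), so that $c_\yy:=\lim_{\mu\to\yy}c(\mu)$ exists with $c_\yy\le C_*$. For the reverse inequality, take $\mu_n\to\yy$ and translate each profile $\phi_{c(\mu_n)}$ so that it equals $\alpha/(2\beta)$ at $x=0$; as $\mu_n\to\yy$ the free boundary recedes to infinity in this translated frame, while the profiles, being uniformly bounded and uniformly Lipschitz by the equation, converge along a subsequence, locally uniformly, to a bounded monotone $\phi_\yy$ solving \eqref{2.3} on all of $\R$ with speed $c_\yy$. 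Hence $\phi_\yy$ is a genuine traveling wave, so $c_\yy\ge C_*$ and therefore $c_\yy=C_*$.

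For part $(3)$, monotonicity of $c(\mu)$ again gives a limit $c_\yy\in(0,\yy]$ as $\mu\to\yy$. If $c_\yy<\yy$, the compactness argument of part $(2)$ produces a traveling wave of \eqref{2.3} with finite speed $c_\yy$; but when {\bf(J2)} fails the characteristic identity has no root, so \eqref{2.3} admits no traveling wave of any finite speed --- a contradiction. Hence $c_\yy=\yy$.

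The step I expect to be the main obstacle is the construction in part $(1)$: selecting sub/supersolutions with exactly the right exponential behaviour near $x=+\yy$ and pushing the iteration through in the nonlocal setting, where no regularising effect is available; together with the compactness/limit step shared by $(2)$ and $(3)$, which must in particular exclude the translated profiles degenerating to a trivial limit.
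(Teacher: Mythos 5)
This proposition is stated as a recalled result, cited to Carr--Chmaj, Yagisita and Du--Li--Zhou, and the paper does not reprove it, so there is no in-paper argument to compare against. Your outline is nevertheless a fair summary of how those references proceed: the dispersion relation $c\lambda = d\int_{\R}P(z)e^{\lambda z}\,{\rm d}z-d+\alpha$ and its minimization over $\lambda>0$ is indeed where $C_*$ comes from, the sub/supersolution construction with exponential ansatz is the standard monotone-semiflow route to existence (Yagisita actually works by approximating the kernel by compactly supported ones and passing to the limit, but your direct sub/super construction is an equivalent and well-known alternative), and the monotonicity-plus-compactness scheme in (2) and (3) is essentially the argument of Du--Li--Zhou, Section 5. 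Two small caveats. In part (1) the critical case $c=C_*$ cannot be handled by a simple exponential supersolution because the characteristic equation has a double root there; one instead obtains the critical wave as a limit of waves with $c_n\searrow C_*$. In part (2), the inequality $c(\mu)\le C_*$ deserves a word: a translated full traveling wave with speed $C_*$ serves as a supersolution for the free boundary problem, which bounds the semi-wave speed from above. The degeneracy concern you flag at the end is precisely what the normalization $\phi(-a_\mu)=\alpha/(2\beta)$ handles: if the recentring points $a_\mu$ stayed bounded, the limiting profile would be a bounded nonnegative entire solution vanishing at a finite point while taking the value $\alpha/(2\beta)$, contradicting the strong maximum principle for the nonlocal operator; hence $a_\mu\to\infty$ and the limit is a genuine traveling wave. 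With those remarks your sketch matches the cited proofs.
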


Next we introduce a free boundary problem that has been studied in \cite{LLW22,LW24}. Let $(w,h)$ be the unique solution of problem
\bes\label{2.4}
\left\{\begin{aligned}
 & w_t=d\dd\int_{0}^{\yy}P(x-y)w(t,y)\dy-dw+w(\alpha-\beta w), && t>0,~ x\in[0,h(t)),\\
&w(t,x)=0, && t>0,~ x\ge h(t),\\
&h'(t)=\mu\dd\int_{0}^{h(t)}\!\!\int_{h(t)}^{\infty}
P(x-y)w(t,x)\dy\dx, && t>0,\\
&h(0)=h_0>0,\;\;  w(0,x)=w_0(x), &&x\in[0,h_0),
\end{aligned}\right.
 \ees
 where $w_0\in C([0,h_0])$, $w_0(x)$ is positive for $x\in[0,h_0)$ and vanishes at $x=h_0$.

 \begin{proposition}[{\cite[Theorems 3.2 and 3.4]{LLW22}} and {\cite[Theorem 1.1]{LW24}}]\label{p2.4}Let $(w,h)$ be the unique solution of \eqref{2.4}. Then one of the following alternatives must happen:

{\rm (i) {\bf Spreading:}} $\lim_{t\to\yy}h(t)=\yy$, and $\lim_{t\to\yy}w(t,x)=\theta(x)$ locally uniformly in $[0,\yy)$, where $\theta\in C([0,\yy))$ is the unique bounded positive solution of
 \bes\label{2.5}
d\int_{0}^{\yy}P(x-y)\theta(y)\dy-d\theta+\theta(\alpha-\beta\theta)=0, ~ ~x\in[0,\yy),
 \ees
and $\theta$ is strictly increasing in $[0,\yy)$, $0<\theta<\alpha/\beta$ and $\lim_{x\to\yy}\theta(x)=\alpha/\beta$. Moreover, if $P$ satisfies {\bf (J1)}, then
\bess
\lim_{t\to\yy}\frac{h(t)}{t}=c(d,P,\alpha,\beta,\mu) ~ ~ {\rm and ~ ~ }\lim_{t\to\yy}\max_{x\in[0,\,ct]}|w(t,x)-\theta(x)|=0,
\eess
where $c\in[0,c(d,P,\alpha,\beta,\mu))$ and $c(d,P,\alpha,\beta,\mu)$ is given by \qq{x2.4}. If $P$ violates {\bf (J1)}, then
\bess
\lim_{t\to\yy}\frac{h(t)}{t}=\yy ~ ~ {\rm and ~ ~ }\lim_{t\to\yy}\max_{x\in[0,\,ct]}|w(t,x)-\theta(x)|=0, ~ ~ \forall c\ge0.
\eess

{\rm (ii) {\bf Vanishing:}} $\lim_{t\to\yy}h(t)<\yy$, and $\lim_{t\to\yy}w(t,x)\to0$ uniformly in $[0,\yy)$.

Moreover, if $\lambda(d,P,\alpha,(0,h_0))\ge0$, then spreading happens for $(w,h)$.
 \end{proposition}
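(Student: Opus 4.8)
The plan is to establish, in order, the spreading--vanishing dichotomy, the identification of the spreading profile $\theta$, the spreading speed of $h$, and the sufficient condition for spreading, broadly following the scheme for scalar nonlocal free boundary problems in \cite{CDLL,LLW22}. First, by a standard contraction/comparison argument \qq{2.4} has a unique global solution $(w,h)$ with $w>0$ on $[0,h(t))$, $w$ bounded above by $\max\{\|w_0\|_{\yy},\alpha/\beta\}$, and $h'(t)>0$, so $h(\yy):=\lim_{t\to\yy}h(t)\in(h_0,\yy]$ exists. Extending $w$ by $0$ makes it a subsolution of the same equation on any larger interval, which lets one compare with the linear problem \qq{2.1} posed on $(0,h(\yy))$ when $h(\yy)<\yy$; using the monotonicity and limits of the principal eigenvalue recalled in \qq{x2.2}, $h(\yy)<\yy$ forces $\lambda(d,P,\alpha,(0,h(\yy)))\le0$, and then a supersolution of the form $Ce^{-\gamma t}\varphi(x)$ built from the corresponding eigenfunction gives $w(t,\cdot)\to0$ uniformly on $[0,\yy)$. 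This is the vanishing alternative; otherwise $h(\yy)=\yy$.

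\emph{Steady state and convergence when $h(\yy)=\yy$.} I would first solve \qq{2.5}: the constant $\alpha/\beta$ is a supersolution and, for $L$ large (legitimate since $\lambda(d,P,\alpha,(0,L))\to\alpha>0$), a small multiple of the principal eigenfunction of \qq{2.1} on $(0,L)$ extended by $0$ is a subsolution; a monotone iteration then yields a bounded positive solution $\theta$, while uniqueness together with $0<\theta<\alpha/\beta$, $\theta$ strictly increasing and $\theta(\yy)=\alpha/\beta$ follows from a sliding/comparison argument exploiting that $f(w)/w=\alpha-\beta w$ is strictly decreasing. For the convergence, since $w$ extended by $0$ is a subsolution of the fixed half-line problem, an upper comparison gives $\limsup_{t\to\yy}w(t,x)\le\theta(x)$; and since $h(t)>L$ eventually, a lower comparison with subsolutions supported on $(0,L)$, letting $L\to\yy$, gives $\liminf_{t\to\yy}w(t,x)\ge\theta(x)$, both locally uniformly, whence $w(t,\cdot)\to\theta$.

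\emph{Spreading speed.} Assume spreading. If $P$ satisfies \textbf{(J1)}, Proposition \ref{p2.2} supplies the semi-wave $(c,\phi_c)$ with $c=c(d,P,\alpha,\beta,\mu)>0$ and $\phi_c'<0$. A shifted, slightly sped-up copy of $\phi_c$ together with the moving curve $(c+\ep)t+K$ forms, for suitable $K$, a supersolution of \qq{2.4}, so $h(t)\le(c+\ep)t+K$ and $\limsup h(t)/t\le c$; a slightly slowed-down semi-wave on an expanding interval, raised by the eigenfunction mechanism above, is a subsolution, so $\liminf h(t)/t\ge c$. Comparing these barriers on $[0,ct]$ for $c<c(d,P,\alpha,\beta,\mu)$ yields $\max_{x\in[0,ct]}|w(t,x)-\theta(x)|\to0$. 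If $P$ violates \textbf{(J1)}, no semi-wave exists; instead one picks kernels $P_n\le P$ satisfying \textbf{(J1)} with $c(d,P_n,\alpha,\beta,\mu)\to\yy$ (using Proposition \ref{p2.3}(3) together with an exhaustion of $P$ by such kernels), compares $w$ from below with the corresponding free boundary problems to get $h(t)/t\to\yy$, and then obtains the uniform convergence on $[0,ct]$ for every $c\ge0$ as before.

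\emph{Sufficient condition and main obstacle.} If $\lambda(d,P,\alpha,(0,h_0))\ge0$ then, since $h(t)\ge h_0$ and $\lambda$ is strictly increasing in $|\oo|$, we would get $\lambda(d,P,\alpha,(0,h(\yy)))>\lambda(d,P,\alpha,(0,h_0))\ge0$ whenever $h(\yy)>h_0$, contradicting the fact that vanishing forces $\lambda(d,P,\alpha,(0,h(\yy)))\le0$; hence spreading occurs. The hard parts are the two-sided semi-wave construction behind $h(t)/t\to c$ together with $\max_{[0,ct]}|w-\theta|\to0$ (the supersolution must dominate both $w$ and its free boundary at once), and the accelerated-spreading case $h(t)/t\to\yy$ when \textbf{(J1)} fails, which requires the kernel-truncation / large-$\mu$ comparison machinery rather than an explicit barrier.
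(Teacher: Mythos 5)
The paper presents this proposition purely as a cited result (from {\cite{LLW22}} and {\cite{LW24}}) and gives no proof of its own, so there is no internal proof for your proposal to be measured against; what you have written is a reconstruction of the standard strategy one would find in those references. That outline --- well-posedness and monotonicity of $h$, the eigenvalue criterion for vanishing, monotone iteration for $\theta$, a squeeze argument for the locally uniform convergence, semi-wave comparison for the finite spreading speed, kernel truncation when {\bf (J1)} fails, and strict monotonicity of the principal eigenvalue for the sufficient condition --- is consistent both with the cited works and with the machinery the paper actually develops later (compare the proof of Proposition~\ref{p4.2}, which uses exactly the compactly-supported truncations $P_n$ and the semi-wave comparison you describe).

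Two places where your sketch is genuinely incomplete rather than merely compressed. First, the exponential supersolution $Ce^{-\gamma t}\varphi$ only forces $w\to 0$ when $\lambda(d,P,\alpha,(0,h(\yy)))<0$ \emph{strictly}; the borderline case $\lambda(d,P,\alpha,(0,h(\yy)))=0$ needs a separate argument, typically comparison with the fixed-domain evolution problem on $[0,h(\yy)]$ whose only nonnegative steady state is $0$ because $f(u)/u=\alpha-\beta u$ is strictly decreasing, so the supersolution decays to $0$ even without a spectral gap. Second, you assert $h(\yy)<\yy$ forces $\lambda(d,P,\alpha,(0,h(\yy)))\le 0$, but this needs its own comparison argument (if $\lambda>0$ the fixed-interval problem has a positive steady state, $w$ stays bounded below, and the free boundary condition then forces $h'(t)$ to stay bounded below, contradicting $h(\yy)<\yy$); you cite \eqref{x2.2} but the monotonicity of the eigenvalue alone does not yield this implication. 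The semi-wave super/subsolution construction you flag as the hard step is indeed the technically demanding part: the moving profile must simultaneously dominate $w$ and have boundary velocity at least $h'(t)$, which is where the $(c+\ep)$-shift and the sign of $\phi_c'$ from Proposition~\ref{p2.2} enter; your sketch gestures at this correctly but would need those details to be a proof.
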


 To stress the dependence of the unique bounded positive solution $\theta(x)$ of \eqref{2.5} on parameter $\beta$, we rewrite $\theta$ as $\theta_{\beta}$.
Next we give two technical lemmas.

\begin{lemma}\label{l2.1}Let $c(d,P,\alpha,\beta,\mu)$ be given by \qq{x2.4}. Then $c(d,P,\alpha,\beta,\mu)$ is continuous and non-increasing in $\beta>0$.
\end{lemma}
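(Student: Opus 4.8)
The plan is to establish the two assertions --- monotonicity and continuity --- separately. Since $c(d,P,\alpha,\beta,\mu)$ is defined through \eqref{x2.4} and Proposition \ref{p2.2} shows that this quantity exists only when $P$ satisfies {\bf (J1)}, I take {\bf (J1)} for granted throughout. For the monotonicity I will compare solutions of the free boundary problem \eqref{2.4}, and for the continuity I will run a compactness argument on the semi-wave problem \eqref{2.2}.

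\textbf{Monotonicity.} Fix $0<\beta_1<\beta_2$ and choose an initial pair $(w_0,h_0)$ with $h_0$ large enough that $\lambda(d,P,\alpha,(0,h_0))\ge 0$; this is possible because, by \eqref{x2.2}, $\lambda(d,P,\alpha,(0,h_0))$ is strictly increasing in $h_0$ with limit $\alpha>0$, and it does not depend on $\beta$. Let $(w_i,h_i)$ be the solution of \eqref{2.4} with reaction $w(\alpha-\beta_iw)$ and this common data. Since $w(\alpha-\beta_1w)\ge w(\alpha-\beta_2w)$ for all $w\ge 0$ while the free boundary law is the same (same $\mu$), $(w_1,h_1)$ is a supersolution of the problem solved by $(w_2,h_2)$, so the comparison principle for \eqref{2.4} gives $h_1(t)\ge h_2(t)$ for all $t>0$. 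By the choice of $h_0$, spreading occurs for both solutions, and Proposition \ref{p2.4}(i) identifies $\lim_{t\to\infty}h_i(t)/t=c(d,P,\alpha,\beta_i,\mu)$; hence $c(d,P,\alpha,\beta_1,\mu)\ge c(d,P,\alpha,\beta_2,\mu)$, and $c$ is non-increasing in $\beta$.

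\textbf{Continuity.} Fix $\beta_0>0$ and any sequence $\beta_n\to\beta_0$. By the monotonicity just proved, $c_n:=c(d,P,\alpha,\beta_n,\mu)$ lies in $[\,c(d,P,\alpha,2\beta_0,\mu),\,c(d,P,\alpha,\beta_0/2,\mu)\,]\subset(0,\yy)$ for all large $n$ (positivity by \eqref{x2.4}), so it suffices to prove that every subsequential limit $c_*$ of $\{c_n\}$ equals $c(d,P,\alpha,\beta_0,\mu)$. Along such a subsequence, let $\phi_n$ be the semi-wave profiles on $(-\yy,0]$. Then $0\le\phi_n\le\alpha/\beta_n$ is uniformly bounded, and solving the first line of \eqref{2.2} for $\phi_n'$ together with $c_n\ge c(d,P,\alpha,2\beta_0,\mu)>0$ shows $\{\phi_n'\}$ is uniformly bounded; hence, by a standard compactness argument, a further subsequence converges in $C^1_{\rm loc}((-\yy,0])$ to a non-increasing $\phi_*$ with $0\le\phi_*\le\alpha/\beta_0$ and $\phi_*(0)=0$. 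Passing to the limit in the first line of \eqref{2.2} (dominated convergence for the nonlocal term) shows $\phi_*$ solves that equation with parameter $\beta_0$ and speed $c_*$; passing to the limit in the speed identity --- after rewriting $\int_{-\yy}^{0}\!\int_{0}^{\yy}P(x-y)\phi_n(x)\,\dy\,\dx=\int_{-\yy}^{0}\phi_n(x)\big(\int_{-x}^{\yy}P(z)\,{\rm d}z\big)\dx$ and noting that $x\mapsto\int_{-x}^{\yy}P(z)\,{\rm d}z\in L^1((-\yy,0))$ exactly because $P$ satisfies {\bf (J1)} --- gives $c_*=\mu\int_{-\yy}^{0}\!\int_{0}^{\yy}P(x-y)\phi_*(x)\,\dy\,\dx$. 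The remaining relation in \eqref{2.2} is the value at $-\yy$: with $L:=\phi_*(-\yy)\in[0,\alpha/\beta_0]$, letting $x\to-\yy$ in the equation gives $c_*\phi_*'(x)\to-L(\alpha-\beta_0L)$, which must vanish since $\phi_*$ is bounded, so $L=0$ or $L=\alpha/\beta_0$; but $L=0$ would force $\phi_*\equiv0$ and hence $c_*=0$, contradicting $c_*>0$, so $L=\alpha/\beta_0$. Thus $(c_*,\phi_*)$ is a solution pair of \eqref{2.2} with parameter $\beta_0$ --- and $\phi_*$, being monotone and non-constant, is strictly decreasing --- so the uniqueness in Proposition \ref{p2.2} forces $c_*=c(d,P,\alpha,\beta_0,\mu)$. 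Since every subsequential limit equals this value, $c_n\to c(d,P,\alpha,\beta_0,\mu)$, which is the asserted continuity at $\beta_0$.

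The genuinely delicate step is the compactness part of the continuity argument, in particular excluding the degenerate alternative $\phi_*(-\yy)=0$; this is exactly where the uniform positive lower bound $c_n\ge c(d,P,\alpha,2\beta_0,\mu)>0$ (a consequence of the monotonicity) and condition {\bf (J1)} are indispensable, while the comparison principle for \eqref{2.4} and the dominated-convergence estimates are routine. As a side remark, the scaling $\psi:=(\beta/\alpha)\phi_c$ turns the semi-wave problem \eqref{2.2} with parameters $(d,P,\alpha,\beta,\mu)$ into the one with parameters $(d,P,\alpha,\alpha,\mu\alpha/\beta)$ and the same speed $c$, so that $c(d,P,\alpha,\beta,\mu)=c(d,P,\alpha,\alpha,\mu\alpha/\beta)$; this reduces the whole lemma to the continuity and monotonicity of $\nu\mapsto c(d,P,\alpha,\alpha,\nu)$, which follow from the same two arguments applied to the $\mu$-variable.
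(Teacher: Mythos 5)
Your monotonicity argument is exactly the paper's: compare solutions of \eqref{2.4} with identical data via the comparison principle, then read off the spreading speeds from Proposition~\ref{p2.4}. For continuity, the paper writes only that it ``can be derived by uniqueness and some compact methods'' and omits everything, so your compactness-then-uniqueness argument on the semi-wave profiles is precisely the route the paper alludes to, and you supply the real content: the two-sided bound on $c_n$ from monotonicity, the {\bf(J1)}-based $L^1$ weight that lets you pass to the limit in the speed identity, and the exclusion of the degenerate alternative $\phi_*(-\yy)=0$ via $c_*>0$.

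One step is genuinely glossed. Proposition~\ref{p2.2} asserts uniqueness of the pair $(c,\phi_c)$ in the class $\phi_c'<0$, whereas the locally uniform limit $\phi_*$ of the strictly decreasing $\phi_n$ is a priori only non-increasing; ``monotone and non-constant, hence strictly decreasing'' is not true for a general non-increasing function and needs an argument specific to \eqref{2.2} (e.g.\ a sliding or strong-maximum-principle argument showing that $\phi_*'(x_0)=0$ at an interior point forces $\phi_*$ to be locally constant and hence globally constant, or an appeal to the uniqueness theorem of \cite{DLZ} stated for merely non-increasing profiles). That patch is routine but should be made. Also, your ``$C^1_{\rm loc}$ convergence'' is easiest to justify by first getting $C^0_{\rm loc}$ convergence from the uniform Lipschitz bound and then reading $\phi_n'\to\phi_*'$ off the equation, since $P$ is only assumed continuous. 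Finally, your closing observation that $\psi=(\beta/\alpha)\phi_c$ gives $c(d,P,\alpha,\beta,\mu)=c(d,P,\alpha,\alpha,\mu\alpha/\beta)$, reducing $\beta$-dependence to $\mu$-dependence, is a clean reduction the paper does not use and is worth recording.
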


\begin{proof}  Let $(w_{\beta},h_{\beta})$ be the unique solution of \eqref{2.4}. By Proposition \ref{p2.4}, we can let $h_0$ be large enough, depending only on the principal eigenvalue of \eqref{2.1}, such that spreading happens for $(w_{\beta},h_{\beta})$. Then in view of Proposition \ref{p2.4} again, we see $\lim_{t\to\yy}h_{\beta}(t)/t=c(d,P,\alpha,\beta,\mu)$. Owing to a comparison argument, we have $h_{\beta_1}(t)\ge h_{\beta_2}(t)$ for all $t\ge0$ and any $0<\beta_1<\beta_2$. Thus, $c(d,P,\alpha,\beta_1,\mu)\ge c(d,P,\alpha,\beta_2,\mu)$. The monotonicity follows.

Since the continuity can be derived by uniqueness and some compact methods, we omit the details. The proof is complete.
\end{proof}

\begin{lemma}\label{l2.2}Let $\theta_{\beta}$ be the unique bounded positive solution of \eqref{2.5}. Then in the space $L^{\yy}([0,\yy))$, $\theta_{\beta}$ is continuous,  and strictly decreasing with respect to $\beta>0$, i.e., $\lim_{\beta\to\beta_1}\|\theta_{\beta}-\theta_{\beta_1}\|_{L^\yy([0,\yy))}=0$ and  $\theta_{\beta_1}(x)>\theta_{\beta_2}(x)$ for $\beta_1<\beta_2$ and $x\ge0$.
\end{lemma}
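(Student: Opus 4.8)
The plan is to establish the monotonicity in $\beta$ first, then upgrade ``non-increasing'' to ``strictly decreasing'' via a nonlocal maximum-principle argument, and finally deduce the $L^{\yy}$-continuity from the resulting monotone pointwise convergence together with the uniqueness in Proposition \ref{p2.4}, Dini's theorem on bounded intervals, and a tail estimate based on the monotonicity of $\theta_{\beta}$ in $x$.

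For the monotonicity I would argue exactly as in the proof of Lemma \ref{l2.1}. Fix $h_0$ so large that $\lambda(d,P,\ap,(0,h_0))\ge0$; this condition does not involve $\beta$, so by Proposition \ref{p2.4} spreading occurs for the solution $(w_{\beta},h_{\beta})$ of \qq{2.4} with this $h_0$ (and any fixed admissible $w_0$) for every $\beta>0$, and hence $w_{\beta}(t,\cdot)\to\theta_{\beta}$ locally uniformly on $[0,\yy)$ as $t\to\yy$. Given $0<\beta_1<\beta_2$, the inequality $s(\ap-\beta_1 s)\ge s(\ap-\beta_2 s)$ for $s\ge0$ and the comparison principle for \qq{2.4} give $w_{\beta_1}(t,x)\ge w_{\beta_2}(t,x)$ for all $t\ge0$, $x\ge0$; letting $t\to\yy$ yields $\theta_{\beta_1}(x)\ge\theta_{\beta_2}(x)$ on $[0,\yy)$. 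To make this strict, put $z=\theta_{\beta_1}-\theta_{\beta_2}\ge0$ and subtract the two instances of \qq{2.5}; using $\beta_2\theta_{\beta_2}^2-\beta_1\theta_{\beta_1}^2=-\beta_1(\theta_{\beta_1}+\theta_{\beta_2})z+(\beta_2-\beta_1)\theta_{\beta_2}^2$ one finds
\be
d\int_0^{\yy}P(x-y)z(y)\dy-dz+\kk[\ap-\beta_1(\theta_{\beta_1}+\theta_{\beta_2})\rr]z=-(\beta_2-\beta_1)\theta_{\beta_2}^2<0,\quad x\in[0,\yy).
\ee
If $z(x_0)=0$ for some $x_0\ge0$, the left-hand side there reduces to $d\int_0^{\yy}P(x_0-y)z(y)\dy\ge0$, contradicting the negative right-hand side (note $\theta_{\beta_2}(x_0)=\theta_{\beta_1}(x_0)>0$); hence $z>0$, i.e. $\theta_{\beta_1}>\theta_{\beta_2}$ on $[0,\yy)$.

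For the continuity, fix $\beta_1>0$. By the monotonicity above, as $\beta\downarrow\beta_1$ (resp. $\beta\uparrow\beta_1$) the functions $\theta_{\beta}$ increase (resp. decrease) pointwise on $[0,\yy)$ to limits $\ol\theta$ (resp. $\ud\theta$); moreover for $\beta$ near $\beta_1$ one has $0<\theta_{\beta}<\ap/\beta<2\ap/\beta_1$ and $\theta_{\beta}$ stays above a fixed positive steady state, so $\ol\theta$ and $\ud\theta$ are bounded and positive. Passing to the limit in \qq{2.5} (dominated convergence for the nonlocal term, using $P\in L^1(\R)$ and the uniform bound), $\ol\theta$ and $\ud\theta$ are bounded positive solutions of \qq{2.5} with $\beta=\beta_1$, hence both equal $\theta_{\beta_1}$ by the uniqueness in Proposition \ref{p2.4}. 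Thus $\theta_{\beta}\to\theta_{\beta_1}$ pointwise and monotonically as $\beta\to\beta_1$; since these are continuous functions with continuous limit, Dini's theorem gives $\|\theta_{\beta}-\theta_{\beta_1}\|_{L^{\yy}([0,R])}\to0$ for each $R>0$. For the tail I would use that $\theta_{\beta}$ and $\theta_{\beta_1}$ are nondecreasing in $x$ with $\theta_{\beta}(\yy)=\ap/\beta$ and $\theta_{\beta_1}(\yy)=\ap/\beta_1$: for $x\ge R$,
\be
|\theta_{\beta}(x)-\theta_{\beta_1}(x)|\le\max\kk\{\ap/\beta-\theta_{\beta_1}(R),\ \ap/\beta_1-\theta_{\beta}(R)\rr\},
\ee
so, given $\ep>0$, one first chooses $R$ with $\ap/\beta_1-\theta_{\beta_1}(R)<\ep/3$ and then $\beta$ close enough to $\beta_1$ that $|\ap/\beta-\ap/\beta_1|<\ep/3$ and $|\theta_{\beta}(R)-\theta_{\beta_1}(R)|<\ep/3$; combined with the convergence on $[0,R]$ this gives $\|\theta_{\beta}-\theta_{\beta_1}\|_{L^{\yy}([0,\yy))}<\ep$.

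The main obstacle I anticipate is precisely the $L^{\yy}$ (as opposed to merely local) part of the continuity: since $[0,\yy)$ is unbounded, compactness alone does not suffice, and one must combine the monotonicity of $\theta_{\beta}$ in $x$ with the explicit, $\beta$-uniformly controlled limit $\ap/\beta$ at infinity to control the tail. The other delicate point is identifying the monotone pointwise limits with $\theta_{\beta_1}$, which relies on passing to the limit in the nonlocal steady-state equation and on the uniqueness of its bounded positive solution; both steps are otherwise routine.
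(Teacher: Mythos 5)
Your proof is correct, and your treatment of the $L^\yy$-continuity is essentially the paper's own (monotone pointwise limit along a sequence, dominated convergence plus the uniqueness of the bounded positive solution of \qq{2.5}, Dini for local uniformity, and a tail estimate exploiting that $\theta_\beta(\yy)=\ap/\beta$). For the monotonicity, however, you take a genuinely different route. The paper establishes the weak inequality $\theta_{\beta_1}\ge\theta_{\beta_2}$ directly at the steady-state level, observing that $\theta_{\beta_1}$ satisfies the $\beta_2$-equation with a strict sign and invoking the supersolution argument of \cite[Lemma 2.7]{LLW22}; you instead obtain the same weak inequality dynamically, comparing the solutions of the free boundary problem \qq{2.4} for the two parameters and letting $t\to\yy$ via Proposition \ref{p2.4}, which is clean once Proposition \ref{p2.4} is in hand but imports more machinery. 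For strictness, the paper locates the largest $x_1$ with $\theta_{\beta_1}(x_1)=\theta_{\beta_2}(x_1)$ (which exists because $\theta_{\beta_1}(\yy)=\ap/\beta_1>\ap/\beta_2=\theta_{\beta_2}(\yy)$) and compares the nonlocal integrals of the two equations at $x_1$, contradicting $\theta_{\beta_1}\ge\theta_{\beta_2}$; you instead subtract the equations to obtain a linear nonlocal relation for $z=\theta_{\beta_1}-\theta_{\beta_2}$ with strictly negative right-hand side and evaluate it at a putative zero of $z$, a strong-maximum-principle argument that avoids the auxiliary point $x_1$, does not rely on the behavior at infinity, and would extend more easily to non-quadratic reaction terms. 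Both routes are correct; yours is arguably a bit more robust, while the paper's stays entirely within the steady-state framework.
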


\begin{proof} Choose $\beta_1<\beta_2$, and denote by $\theta_{\beta_1}$ and $\theta_{\beta_2}$ their corresponding bounded positive solutions, respectively. Then $\theta_{\beta_1}$ satisfies
\bess
d\int_{0}^{\yy}P(x-y)\theta_{\beta_1}(y)\dy-d\theta_{\beta_1}+\theta_{\beta_1}(\alpha-\beta_2\theta_{\beta_1})<0, ~ ~ x\in[0,\yy).
\eess
Then arguing as in the proof of \cite[Lemma 2.7]{LLW22}, we can deduce $\theta_{\beta_1}(x)\ge\theta_{\beta_2}(x)$ for $x\ge0$. Note that $\theta_{\beta_1}(\yy)=\alpha/\beta_1>\theta_{\beta_2}(\yy)=\alpha/\beta_2$. If there exists some $x_0\ge0$ such that $\theta_{\beta_1}(x_0)=\theta_{\beta_2}(x_0)$, then one can find the largest $x_1>0$ such that $\theta_{\beta_1}(x_1)=\theta_{\beta_2}(x_1)$ and $\theta_{\beta_1}(x)>\theta_{\beta_2}(x)$ for all $x\ge x_1$. Making use of the identities of $\theta_{\beta_i}$ with $i=1,2$, we have
\[d\int_{0}^{\yy}P(x_1-y)\theta_{\beta_1}(y)\dy-\beta_1\theta^2_{\beta_1}(x_1)=d\int_{0}^{\yy}P(x_1-y)\theta_{\beta_2}(y)\dy-\beta_2\theta^2_{\beta_2}(x_1),\]
which, combined with $\theta_{\beta_1}(x)>\theta_{\beta_2}(x)$ for all $x\ge x_1$, yields a contradiction. Thus $\theta_{\beta_1}(x)>\theta_{\beta_2}(x)$ for $x\ge0$.

Next we prove the continuity. Let $\beta_n\nearrow\beta_0$, and denote their corresponding bounded positive solutions by $\theta_{\beta_n}$ and $\theta_{\beta_0}$, respectively. By monotonicity, we see $\theta_{\beta_n}(x)>\theta_{\beta_0}(x)$ and $\theta_{\beta_n}(x)$ is strictly decreasing in $n\ge1$ for all $x\ge0$. Thus $\theta_{\yy}:=\lim_{n\to\yy}\theta_{\beta_n}(x)$ is well defined for $x\ge0$ and $\theta_{\yy}\ge\theta_{\beta_0}$. By the dominated convergence theorem and the uniqueness of the bounded positive solution (cf. \cite{LLW22}), we see $\theta_{\yy}=\theta_{\beta_0}$. In light of Dini's theorem, $\theta_{\beta_n}\to\theta_{\beta_0}$ locally uniformly in $[0,\yy)$ as $n\to\yy$. Obviously, for any small $\ep>0$, there exists a large $N_1$ such that $\alpha/\beta_n<\alpha/\beta_0+\ep$ for $n\ge N_1$. Together with the monotonicity of $\theta_{\beta}$ on $\beta$, one can find a large $X\gg1$ such that $\theta_{\beta_n}(x)\le\theta_{\beta_0}(x)+\ep$ for $x\ge X$ and $n\ge N_1$. As  $\theta_{\beta_n}\to\theta_{\beta_0}$ uniformly in $[0,X]$, we can find $N_2$ such that $\theta_{\beta_n}(x)\le\theta_{\beta_0}(x)+\ep$ for $x\in[0,X]$ and $n\ge N_2$. Setting $N=\max\{N_1,N_2\}$, one sees $\theta_{\beta_n}(x)\le\theta_{\beta_0}(x)+\ep$ for $x\ge0$ and $n\ge N$. Similarly, we can handle the case $\beta_n\searrow\beta_0$. Therefore, in the space $C([0,\yy))$, $\theta_{\beta}$ is continuous with respect to $\beta>0$.
\end{proof}

Let $\oo=(l_1,l_2)$, with $l_1<l_2$, be an interval and consider the following steady state problem
\bes\label{2.6}
\begin{cases}
\dd d_1\int_\oo J_1(x-y)u(y)\dy-d_1u+f_1(u,v)=0, ~ ~ x\in\boo,\\[3mm]
\dd d_2\int_\oo J_2(x-y)v(y)\dy-d_2v+f_2(u,v)=0, ~ ~ x\in\boo.
\end{cases}
\ees

\begin{lemma}\lbl{le2.7}{\rm(Strong maximum principle)} Let $p\in L^\yy(\oo)$ and $0\le w\in L^\yy(\oo)$ satisfy
 \bes
 \int_\oo J(x-y)w(y)\dy-dw(x)+p(x)w(x)\le 0,\;\;x\in\boo,
  \lbl{3.4}\ees
where $d>0$. Then either $w\equiv 0$, or $w>0$ in $\boo$ and $\inf_{\oo}w>0$.
\end{lemma}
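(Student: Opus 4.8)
The plan is to run the standard ``propagation of positivity'' argument for nonlocal operators, whose engine is the hypothesis $J(0)>0$ in {\bf(J)}. If $w\equiv0$ there is nothing to prove, so I assume $w\not\equiv0$ and aim to show $w>0$ in $\boo$ and $\inf_\oo w>0$. Since $J$ is continuous with $J(0)>0$, I first fix $\ep_0,\de_0>0$ with $J(z)\ge\ep_0$ for $|z|\le\de_0$. Rewriting \qq{3.4} as
\[\int_\oo J(x-y)w(y)\dy\le(d-p(x))\,w(x),\qquad x\in\boo,\]
and using $J,w\ge0$, the basic local observation drops out: if $w(x_0)=0$ for some $x_0\in\boo$, then the right-hand side vanishes there, forcing $\int_\oo J(x_0-y)w(y)\dy=0$, hence $w(y)=0$ for a.e.\ $y\in(x_0-\de_0,x_0+\de_0)\cap\oo$. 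In other words, an essential zero of $w$ instantly spreads over a full $\de_0$-neighbourhood.

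Next I would globalize this by a connectedness argument. Suppose, for contradiction, that $\inf_\oo w=0$. Then I can pick $x_n\in\oo$ with $w(x_n)\to0$ and, after passing to a subsequence, $x_n\to\bar x\in\boo$. From the rewritten inequality, $\int_\oo J(x_n-y)w(y)\dy\le(d+\|p\|_{L^\yy(\oo)})w(x_n)\to0$, while $0\le J(x_n-y)w(y)\le\|J\|_{L^\yy}w(y)$ with $w\in L^1(\oo)$; so dominated convergence gives $\int_\oo J(\bar x-y)w(y)\dy=0$, and therefore $w=0$ a.e.\ on $(\bar x-\de_0,\bar x+\de_0)\cap\oo$. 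Now let $I_0=(a,b)$ be the maximal subinterval of $\oo$ on which $w=0$ a.e.\ and which contains this neighbourhood. Since $w\not\equiv0$, $I_0\ne\oo$, so $I_0$ has an endpoint lying in the open interval $\oo$; say $b\in\oo$ (the left-endpoint case is symmetric). By maximality of $I_0$, $w$ is not a.e.\ zero on $(b,b+\eta)$ for any $\eta>0$. But picking $x_0\in(a,b)$ with $b-x_0$ small and $w(x_0)=0$ (possible since $w=0$ a.e.\ on $(a,b)$), the local observation at $x_0$ yields $w=0$ a.e.\ on $(x_0-\de_0,x_0+\de_0)\cap\oo$, an interval that overshoots $b$ to the right --- a contradiction. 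Hence $m:=\inf_\oo w>0$.

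Finally, with $w\ge m>0$ a.e.\ established, the pointwise conclusions follow quickly. Set $\rho:=\min_{x\in\boo}|(x-\de_0,x+\de_0)\cap\oo|$, which is positive because $x\mapsto|(x-\de_0,x+\de_0)\cap\oo|$ is continuous and strictly positive on the compact set $\boo$. For every $x\in\boo$,
\[\int_\oo J(x-y)w(y)\dy\ge\ep_0\int_{(x-\de_0,x+\de_0)\cap\oo}w(y)\dy\ge\ep_0\,m\,\rho>0,\]
so $(d-p(x))w(x)\ge\ep_0 m\rho>0$; this forces $w(x)>0$ and $d-p(x)>0$, whence $w(x)\ge\ep_0 m\rho/(d+\|p\|_{L^\yy(\oo)})>0$. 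Since $x\in\boo$ was arbitrary, $w>0$ in $\boo$ and $\inf_\oo w\ge\ep_0 m\rho/(d+\|p\|_{L^\yy(\oo)})>0$, as required.

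The step I expect to be the main obstacle is the connectedness bookkeeping in the second paragraph: in the merely-$L^\yy$ setting one must phrase vanishing and the support in the essential sense, justify that a maximal a.e.-vanishing interval exists, and --- crucially --- check at the endpoints of $\oo$ that the $\de_0$-neighbourhood produced by the integral identity genuinely runs past the boundary of that interval (which is exactly where $J(0)>0$ enters). The remaining steps are routine compactness and positivity bookkeeping.
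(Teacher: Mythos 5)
Your argument is correct and rests on the same engine as the paper's proof: the hypothesis $J(0)>0$ forces any (essential) zero of $w$ to spread over a $\delta_0$-neighbourhood, and one then propagates this through the connected interval. However, the execution differs in two respects that are worth noting. First, the paper introduces the auxiliary function $h(x)=\int_\oo J(x-y)w(y)\dy$, which is continuous on $\boo$, and runs the connectedness argument on the open set ${\cal O}=\{x\in\boo: h(x)>0\}$, showing it is simultaneously open and closed; you instead work directly with $w$ and a maximal a.e.-vanishing subinterval. The two are equivalent in one dimension, but the paper's formulation buys you a pointwise-defined continuous object to reason with, which sidesteps several of the representative/``essential sense'' worries you flag at the end, and also makes the final compactness step (where the paper passes $h(x_k)\to h(x_0)>0$ along a minimizing sequence) entirely routine. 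Second, the order is reversed: the paper first establishes the dichotomy $w\equiv 0$ or $w>0$ on $\boo$, and only then derives $\inf_\oo w>0$ by a separate compactness argument; your maximal-interval step proves $\inf_\oo w>0$ first and then deduces strict pointwise positivity on $\boo$ (with a quantitative lower bound, which is a small bonus). Both routes are sound; the paper's is cleaner in the $L^\yy$ setting for the reason just mentioned, and also more readily transferable to higher-dimensional $\oo$, whereas the maximal-interval trick is specific to intervals.
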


\begin{proof} Clearly, the function $h(x)=\int_\oo J(x-y)w(y)\dy$ is nonnegative and continuous in $\boo$. If there is $x_0\in\boo$ such that $h(x_0)=0$, then
the set
 \[ {\cal O}=\{x\in\boo:\, h(x)>0\}\]
is an open subset of $\boo$ by the continuity of $h(x)$. We can also observe that $w(x)>0$ in ${\cal O}$.

Let $x_i\in{\cal O}$ and $x_i\to \bar x\in\boo$, then $w(x_i)>0$. If $h(\bar x)=0$, then $w=0$ in a neighborhood $B_\sigma(\bar x)\cap\boo$ for some $\sigma>0$ as $J(0)>0$ and $J(\bar x-y)$ is continuous in $y\in\boo$. There exist $\tau >0$ and $i$ large enough such that $B_{\tau}(x_i)\subset B_{\sigma}(\bar{x})$, that is, $h>0$ in $B_{\tau}(x_i)\cap\boo$, which implies that $w>0$ in $B_{\tau}(x_i)\cap\boo$.  This contradiction yields that $h(\bar x)>0$. Thus, ${\cal O}$ is a closed subset of $\boo$. So ${\cal O}=\emptyset$ or ${\cal O}=\boo$,  then either $w\equiv 0$ or $w>0$ in $\boo$.

Assume that $w>0$ in $\oo$ and $\inf_{\oo}w=0$. Then there exist $x_k\in\oo$ and $x_0\in\boo$ such that $x_k\to x_0$ and $w(x_k)\to 0$. Hence, $h(x_k)\to h(x_0)>0$ and $p(x_k)w(x_k)\to 0$. Taking $x=x_k$ in \qq{3.4} and letting $k\to\yy$ we can get a contradiction.
\end{proof}

\begin{lemma}\lbl{le2.8} Let $(u,v)\in [L^\yy(\oo)]^2$ be a nonnegative solution of \qq{2.6}. Then one of the followings holds:\vspace{-2mm}
\begin{enumerate}[$(1)$]
  \item $u\equiv 0$ and $v\equiv0$ in $\boo$;\vspace{-2.5mm}
 \item $u\equiv 0$ and $v>0$ in $\boo$, and $v\in C(\boo)$;\vspace{-2.5mm}
 \item $v\equiv 0$ and $u>0$ in $\boo$, and $u\in C(\boo)$;\vspace{-2.5mm}
 \item $u>0, v>0$ in $\boo$, and $u,v\in C(\boo)$.
 \end{enumerate}\vspace{-2mm}
Moreover, problem \qq{2.6} has at most one positive solution.
\end{lemma}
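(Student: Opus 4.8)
The plan is to handle the dichotomy (1)--(4) first and then the uniqueness of the positive solution separately. For the classification, the key observation is that each equation in \qq{2.6} has the form treated by the strong maximum principle of Lemma \ref{le2.7}. Rewrite the $u$-equation as $d_1\int_\oo J_1(x-y)u(y)\dy - d_1 u + p_1(x)u = 0$ with $p_1(x) = r_1(a - u(x) - u(x)/(1+bv(x)))$, which lies in $L^\yy(\oo)$ since $u,v$ are bounded and nonnegative; note $f_1(u,v) = u\cdot p_1(x)$. By Lemma \ref{le2.7} applied with $w = u$ and $d = d_1$ (the inequality \qq{3.4} holds with equality), either $u\equiv 0$ or $u > 0$ in $\boo$ with $\inf_\oo u > 0$. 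The same argument applied to the $v$-equation gives either $v\equiv 0$ or $v > 0$ with $\inf_\oo v > 0$. Combining the two dichotomies yields exactly the four cases listed.

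Next I would establish the continuity claims. In case (3), where $u > 0$ and $v\equiv 0$, the $u$-equation reduces to $d_1\int_\oo J_1(x-y)u(y)\dy - d_1 u + r_1 u(a - 2u) = 0$; solving for $u$ one gets $u(x) = \big(d_1\int_\oo J_1(x-y)u(y)\dy + \text{(lower order)}\big)/(\text{something bounded away from }0)$ — more precisely, since $\inf_\oo u > 0$, the quadratic $r_1 u(a-2u)$ written as $u\cdot q(x)$ with $q(x)$ continuous wherever $u$ is, and the map $x\mapsto \int_\oo J_1(x-y)u(y)\dy$ is automatically continuous in $\boo$ for any $u\in L^\yy(\oo)$ because $J_1$ is continuous. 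Isolating the zeroth-order term, $d_1 u(x)(1 - \text{bounded})$ equals a continuous function, and because $\inf u > 0$ keeps us away from any degeneracy of the coefficient of $u$, a standard bootstrap (the right-hand side is continuous, hence so is $u$) gives $u\in C(\boo)$. The same reasoning handles $v$ in cases (2) and (4); in case (4) one treats the two equations as a coupled system but the argument is identical since the nonlinearities $f_1,f_2$ are continuous in $(u,v)$ and both components are bounded below by a positive constant.

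For the uniqueness of the positive solution I would argue by contradiction using the cooperative/sublinear structure together with a sliding or scaling comparison. Suppose $(u_1,v_1)$ and $(u_2,v_2)$ are two positive solutions of \qq{2.6}. Since $f_1(u,v)/u = r_1(a - u - u/(1+bv))$ is strictly decreasing in $u$ and (for fixed $u$) strictly increasing in $v$, and symmetrically for $f_2$, the system is a cooperative sublinear elliptic-type system of nonlocal type. The standard approach is to set $\sigma^* = \sup\{\sigma > 0 : \sigma(u_1,v_1) \le (u_2,v_2)\ \text{in}\ \boo\}$ (finite and positive since all components are bounded and bounded below), show $\sigma^* \ge 1$ by deriving a contradiction from the strict sublinearity if $\sigma^* < 1$: at a would-be touching point one uses the equation for $\sigma^* u_1$ versus $u_2$ and the strong maximum principle of Lemma \ref{le2.7} applied to the difference to force either $\sigma^* u_1 \equiv u_2$ (and similarly in the other component), which together with the equations and $\sigma^* < 1$ contradicts the strict monotonicity of $f_i(\cdot)/(\cdot)$; hence $(u_1,v_1)\le(u_2,v_2)$, and by symmetry $(u_2,v_2)\le(u_1,v_1)$, giving equality. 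The main obstacle is this last step: in the nonlocal setting one cannot use pointwise differential inequalities at an interior minimum the way one does for the Laplacian, so the contradiction must be extracted purely from the integral identity plus Lemma \ref{le2.7}, and one must be careful that the ``touching point'' argument is replaced by the infimum-is-attained-in-the-limit mechanism already used in the proof of Lemma \ref{le2.7} and in Lemma \ref{l2.2}; I expect to mimic the comparison technique of \cite[Lemma 2.7]{LLW22} essentially verbatim, adapted to the two-component cooperative system.
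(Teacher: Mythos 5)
Your overall plan matches the paper's: Lemma \ref{le2.7} gives the positive/zero dichotomy component-by-component, then continuity is obtained by ``solving'' the pointwise algebraic system, and uniqueness comes from a scaling comparison (your $\sigma^*=\sup\{\sigma:\sigma(u_1,v_1)\le(u_2,v_2)\}$ is just the reciprocal of the paper's $k^*=\inf\{k\ge 1:k(u_1,v_1)\ge(u_2,v_2)\}$). However, two of your steps are left incomplete in a way that matters.

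For continuity in case (4), your appeal to ``a standard bootstrap'' or an ``identical argument'' for the coupled system skips the actual content. The coupled system $P_1(x,u,v)=P_2(x,u,v)=0$ (with $Q_i(x)=d_i\int_\oo J_i(x-y)\cdot\,dy$ as the continuous data) admits a continuous solution branch only if the Jacobian in $(u,v)$ is nondegenerate, and this has to be checked: the paper computes
\[
\det\frac{\partial(P_1,P_2)}{\partial(u,v)}=\Big(\tfrac{Q_1}{u}+r_1u+\tfrac{r_1u}{1+bv}\Big)\Big(\tfrac{Q_2}{v}+r_2v+\tfrac{r_2v}{1+qu}\Big)-\tfrac{r_1r_2u^2v^2bq}{(1+bv)^2(1+qu)^2}>0,
\]
where the diagonal entries have been rewritten using the equations themselves. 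This positivity is what closes the argument and is not automatic from boundedness-below alone; a generic $2\times 2$ coupled quadratic system need not have a continuous selection. Your scalar cases (2)--(3) are fine by the quadratic formula, but case (4) is the one that needed work.

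For uniqueness, your $\sigma^*$ method is the right idea, but the mechanism you sketch is not the one that finishes it. You do not need Lemma \ref{le2.7} applied to a difference, nor any ``infimum-attained-in-the-limit'' device: on the compact interval $\boo$, with $u_i,v_i\in C(\boo)$ and $\inf_\oo u_1,\inf_\oo v_1>0$, the infimum defining $k^*$ is attained, so a genuine touching point $x_0$ exists. The contradiction is then purely algebraic. Subtracting the $u$-equations at $x_0$ and using $k^*u_1\ge u_2$ on $\oo$ gives $u_1(x_0)\big(1+\tfrac{1}{1+bv_1(x_0)}\big)\ge u_2(x_0)\big(1+\tfrac{1}{1+bv_2(x_0)}\big)$; dividing by $u_2(x_0)=k^*u_1(x_0)$ and then using $k^*v_1(x_0)\ge v_2(x_0)$ yields $\tfrac{1}{k^*}+\tfrac{1}{k^*+bv_2(x_0)}\ge 1+\tfrac{1}{1+bv_2(x_0)}$, which is false when $k^*>1$. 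Establishing ``$\sigma^*u_1\equiv u_2$'' via the strong maximum principle, as you propose, would still leave this algebraic step to be done and additionally requires a correctly-signed linear inequality for the difference in a coupled cooperative system, which you do not derive. So the scaling idea is right, but the concrete computation that actually produces the contradiction is missing from your proposal.
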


\begin{proof} If $u\equiv 0$, then $v$ satisfies
 \[d_2\int_\oo J_2(x-y)v(y)\dy-d_2v+r_2v(1-2v)=0, ~ ~ x\in\boo.\]
Firstly, in view of Lemma \ref{le2.7} it yields that either $v\equiv 0$ in $\boo$, or $v>0$ in $\boo$ and $\inf_\oo v>0$. In the latter case, it is easy to see that $v\in C(\boo)$.

If $u\not\equiv 0$, $v\not\equiv 0$, then $u, v>0$ in $\boo$ and $\inf_\oo u, \inf_\oo v>0$ by Lemma \ref{le2.7}.

We will prove $u,v\in C(\boo)$ by using the implicit function theorem.  Let
 \bess
&\dd Q_1(x)=d_1\int_\oo J_1(x-y)u(y)\dy, ~  ~ Q_2(x)=d_2\int_\oo J_2(x-y)v(y)\dy,\\[1mm]
&P_1(x,w,z)=Q_1(x)-d_1w+f_1(w,z), ~ ~ P_2(x,w,z)=Q_2(x)-d_2z+f_2(w,z).
 \eess
 Clearly, $P_i$ for $i=1,2$ are continuous with respect to $x\in\boo$ and $ w,z\ge0$, and $(u,v)$ satisfies
 \[P_1(x,u,v)=0,\;\;\;P_2=(x,u,v)=0,\;\;\;x\in\boo.\]
By the direct calculations, we have
\bess
{\rm det}\kk.\frac{\partial (P_1,P_2)}{\partial (u,v)}\rr|_{(u,v)}&=&\kk(ar_1-d_1-2r_1u-\frac{2r_1u}{1+bv}\rr)
\kk(r_2-d_2-2r_2v-\frac{2r_2v}{1+qu}\rr)-\frac{r_1r_2u^2v^2bq}{(1+bv)^2(1+qu)^2}\\[2mm]
&=&\kk(\frac{Q_1}{u}+r_1u+\frac{r_1u}{1+bv}\rr)
\kk(\frac{Q_2}{v}+r_2v+\frac{r_2v}{1+qu}\rr)-\frac{r_1r_2u^2v^2bq}{(1+bv)^2(1+qu)^2}\\[2mm]
&\ge&\kk(\frac{Q_1}{u}+r_1u+\frac{r_1u}{1+bv}\rr)
\kk(\frac{Q_2}{v}+r_2v+\frac{r_2v}{1+qu}\rr)-\frac{r_1ur_2v}{(1+bv)(1+qu)}>0.
\eess
It follows from the implicit function theorem that $(u,v)$ is continuous in $x\in\boo$.

At last we prove that \qq{2.6} has at most one positive solution. Let $(u_i,v_i)$, $i=1,2$, be two positive solutions. Then $u_i, v_i\in C(\boo)$ by the conclusion  (4). Thus, we can define
\[k^*=\inf\{k\ge1: k(u_1(x),v_1(x))\ge(u_2(x),v_2(x)) ~ ~ {\rm for ~ }x\in\boo\}.\]
Clearly, $k^*\ge1$ and $k^*(u_1(x),v_1(x))\ge(u_2(x),v_2(x))$ in $\boo$. We now show $k^*=1$. Otherwise, there exists some $x_0\in\boo$ such that $k^*u_1(x_0)=u_2(x_0)$ or $k^*v_1(x_0)=v_2(x_0)$. Without loss of generality, we assume that $k^*u_1(x_0)=u_2(x_0)$. In view of the identity of $u_i$ for $i=1,2$, we have
\bess
u_1(x_0)\left(1+\frac{1}{1+bv_1(x_0)}\right)\ge u_2(x_0)\left(1+\frac{1}{1+bv_2(x_0)}\right),
\eess
which, combined with $k^*u_1(x_0)=u_2(x_0)$, yields
\bess
\frac{1}{k^*}+\frac{1}{k^*+bk^*v_1(x_0)}\ge 1+\frac{1}{1+bv_2(x_0)}.
\eess
Owing to $k^*v_1(x_0)\ge v_2(x_0)$, we have
\bess
\frac{1}{k^*}+\frac{1}{k^*+bv_2(x_0)}\ge 1+\frac{1}{1+bv_2(x_0)}.
\eess
This is a contradiction. So $k^*=1$, i.e.,  $(u_1(x),v_1(x))\ge(u_2(x),v_2(x))$ in $\boo$. Exchanging the positions of $(u_1,v_1)$ and $(u_2,v_2)$, we can deduce $(u_2(x),v_2(x))\ge(u_1(x),v_1(x))$ in $\boo$. The uniqueness follows, and the proof is complete. \end{proof}

Let
 \bess
 \lm_1^p(\oo):=\lambda(d_1,J_1,r_1a,\oo),\;\;\;
 \lm_2^p(\oo):=\lambda(d_2,J_2,r_2,\oo)\eess
be the principal eigenvalues of \eqref{2.1} with $(d,P,\alpha)$ replaced by $(d_1,J_1,r_1a)$ and $(d_2,J_2,r_2)$, respectively. Let $(u^*,v^*)$ be the unique positive root of
  \bes\label{1.2}
   f_1(u,v)=f_2(u,v)=0.\ees

 \begin{lemma}\label{l2.3}
\begin{enumerate}[$(1)$]
  \item If $\lm_i^p(\oo)\le0$, $i=1,2$, then $(0,0)$ is the unique nonnegative solution of \eqref{2.6}.
\item If $\lm_1^p(\oo)>0$ and $\lm_2^p(\oo)\le0$, then \eqref{2.6} only has two nonnegative solutions, $(0,0)$ and $(\theta_1,0)$, where $\theta_1\in C(\boo)$ is the unique positive solution of
      \bes\label{x2.1}
      d_1\int_\oo J_1(x-y)\theta(y)\dy-d_1\theta+r_1\theta(a-2\theta)=0, ~ ~ x\in\boo.
      \ees
\item If  $\lm_1^p(\oo)\le0$ and $\lm_2^p(\oo)>0$, then \eqref{2.6} only has two nonnegative solutions, $(0,0)$ and $(0,\theta_2)$, where $\theta_2\in C(\boo)$ is the unique positive solution of
      \bes\label{a2.2}
      d_2\int_\oo J_2(x-y)\theta(y)\dy-d_2\theta+r_2\theta(1-2\theta)=0, ~ ~ x\in\boo.
      \ees
\item If $\lm_i^p(\oo)>0$, $i=1,2$, then \eqref{2.6} has a trivial solution $(0,0)$, two semi-trivial solutions $(\theta_1,0)$ and $(0,\theta_2)$, and a unique positive solution $(u_\oo,v_\oo)\in [C(\boo)]^2$, where $\theta_1$ and $\theta_2$ are given by $(2)$ and $(3)$, respectively.\vspace{-2mm}
\item Take $\oo=(-l,l)$ with $l>0$, and denote $(u_l,v_l)=(u_\oo,v_\oo)$. Assume $\lm_1^p(\oo)>0$ and $\lm_2^p(\oo)>0$. Then $(u_l,v_l)$ is nondecreasing in $l$, and $(u_l,v_l)\to(u^*,v^*)$ locally uniformly in $\mathbb{R}$ as $l\to\yy$, where $(u^*,v^*)$ is given by \eqref{1.2}.
\end{enumerate}
\end{lemma}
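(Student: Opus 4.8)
The plan is to read parts (1)--(4) off the case dichotomy of Lemma~\ref{le2.8}: the semi-trivial branches are governed by the scalar nonlocal logistic equation, for which a positive solution exists precisely when the relevant principal eigenvalue is positive, so the only real work is to rule out (in (1)--(3)) or construct (in (4)) a coexistence state; part (5) is then a monotone-limit argument. For \emph{parts (1)--(3)}, let $(u,v)$ be a nonnegative solution of \eqref{2.6}; by Lemma~\ref{le2.8} it is of one of the four listed types. If $u\equiv0$ and $v>0$ then $v$ solves $d_2\int_\oo J_2(x-y)v(y)\dy-d_2v+r_2v(1-2v)=0$, whose linearization at $0$ has principal eigenvalue $\lm_2^p(\oo)$; by the scalar theory this forces $\lm_2^p(\oo)>0$ and $v=\theta_2$. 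Symmetrically $v\equiv0$, $u>0$ forces $\lm_1^p(\oo)>0$ and $u=\theta_1$. In the coexistence case $u,v>0$ (hence $u,v\in C(\boo)$ with $\inf_\oo u,\inf_\oo v>0$ by Lemma~\ref{le2.8}), evaluate the $u$-equation at a maximum point $x_M\in\boo$ of $u$; since $\int_\oo J_1(x_M-y)\dy\le1$ one gets $f_1(u(x_M),v(x_M))\ge0$, hence $u(x_M)<a$ because $u(x_M),v(x_M)>0$, so $u<a$ on $\boo$, and likewise $v<1$ on $\boo$. Now $v<1$ gives $u/(1+bv)>u/(1+b)$, so $f_1(u,v)<r_1u(a-\gamma_1u)$ with $\gamma_1:=1+(1+b)^{-1}>1$, i.e. $u$ is a positive \emph{subsolution} of $d_1\int_\oo J_1(x-y)w(y)\dy-d_1w+r_1w(a-\gamma_1w)=0$ on $\boo$. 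Pairing this inequality with the principal eigenfunction $\phi_1>0$ of \eqref{2.1} (parameters $(d_1,J_1,r_1a)$) and using that $J_1$ is even, the linear terms collapse to $\lm_1^p(\oo)\int_\oo u\phi_1$ while the quadratic term is strictly negative, so $\lm_1^p(\oo)>0$; symmetrically $\lm_2^p(\oo)>0$. Hence coexistence forces \emph{both} eigenvalues positive, and parts (1)--(3) follow on combining this with the two semi-trivial subcases and the scalar existence/uniqueness of $\theta_1,\theta_2$.

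For \emph{part (4)}, the solutions $(0,0)$, $(\theta_1,0)$, $(0,\theta_2)$ exist by the scalar theory, and the system is cooperative for $u,v\ge0$ since $\partial_vf_1=r_1bu^2/(1+bv)^2\ge0$ and $\partial_uf_2=r_2qv^2/(1+qu)^2\ge0$, so the super--sub solution method applies. Because constants are supersolutions of the linear nonlocal operator $d_i\int_\oo J_i(x-y)\cdot\,\dy-d_i\cdot$ (as $\int_\oo J_i\le1$) and $f_1(u^*,v^*)=f_2(u^*,v^*)=0$, the constant pair $(u^*,v^*)$ is a supersolution; and $(\ep\phi_1,\ep\phi_2)$ (with $\phi_i$ normalized principal eigenfunctions of \eqref{2.1}) is a subsolution for $\ep>0$ small, since $f_1(\ep\phi_1,\ep\phi_2)\ge r_1\ep\phi_1(a-2\ep\phi_1)$ and $\lm_1^p(\oo)>0$ give the $u$-inequality, similarly the $v$-inequality, while $\ep\phi_1\le u^*$, $\ep\phi_2\le v^*$ for $\ep$ small. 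The resulting solution lies between these, hence is positive, and is therefore the unique positive solution $(u_\oo,v_\oo)$ from Lemma~\ref{le2.8}.

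For \emph{part (5)}: given admissible $l_1<l_2$, restricting $(u_{l_2},v_{l_2})$ to $[-l_1,l_1]$ gives a positive supersolution of \eqref{2.6} on $(-l_1,l_1)$ (only the nonnegative tail $d_i\int_{(-l_2,l_2)\setminus(-l_1,l_1)}J_i(x-y)\cdot\,\dy$ is dropped), so comparison with the subsolution $(\ep\phi_1,\ep\phi_2)$ and uniqueness give $(u_{l_1},v_{l_1})\le(u_{l_2},v_{l_2})$ on $[-l_1,l_1]$; the same with $(u^*,v^*)$ gives $(u_l,v_l)\le(u^*,v^*)$. Hence $(u_l,v_l)\uparrow(U,V)$ pointwise on $\R$ with $0<(U,V)\le(u^*,v^*)$, and letting $l\to\yy$ in the integral equations (dominated convergence, $J_i\in L^1$) shows $(U,V)$ solves the system on all of $\R$; arguing as in the proof of Lemma~\ref{le2.8}, $(U,V)$ is continuous. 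Next, since $f_1(u_l,v_l)\ge r_1u_l(a-2u_l)$ and $f_2(u_l,v_l)\ge r_2v_l(1-2v_l)$, the components $u_l$, $v_l$ are supersolutions on $(-l,l)$ of the scalar logistic equations \eqref{x2.1}, \eqref{a2.2} (with $\oo=(-l,l)$), whose unique positive solutions $\theta_1^l,\theta_2^l$ therefore satisfy $u_l\ge\theta_1^l$, $v_l\ge\theta_2^l$; the known locally uniform convergence $\theta_1^l\to a/2$, $\theta_2^l\to1/2$ yields $U\ge a/2$, $V\ge1/2$ on $\R$. Finally put $\Phi(v)=a(1+bv)/(2+bv)$, $\Psi(u)=(1+qu)/(2+qu)$, so that $u=\Phi(v)$, $v=\Psi(u)$ encode \eqref{1.2} and $g:=\Phi\circ\Psi$ has the unique positive fixed point $u^*$ (with $v^*=\Psi(u^*)$). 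Evaluating the limit equations along sequences approaching $\sup_\R U$ and $\inf_\R U$, using $\int_\R J_i=1$ and the monotonicity of $f_1(u,\cdot)$ and $f_2(\cdot,v)$, gives $\sup_\R U\le\Phi(\sup_\R V)\le g(\sup_\R U)$ and $\inf_\R U\ge g(\inf_\R U)$; since $g(t)-t$ is continuous, positive near $0^+$, tends to $-\yy$ as $t\to\yy$, and vanishes only at $u^*$, it is positive on $(0,u^*)$ and negative on $(u^*,\yy)$, forcing $\inf_\R U=\sup_\R U=u^*$ and then $\inf_\R V=\sup_\R V=\Psi(u^*)=v^*$. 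So $(U,V)\equiv(u^*,v^*)$, and since the monotone limit is continuous, Dini's theorem makes the convergence locally uniform.

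The step I expect to be hardest is identifying the limit in part (5): the problem on $\R$ has no boundary to exploit, so one must separately establish $\inf_\R U,\inf_\R V>0$ (via the scalar comparison with $\theta_1^l,\theta_2^l$) before the cooperative sup/inf argument can be closed — the implication ``$f_1(u,v)\ge0$ and $u>0\Rightarrow u\le\Phi(v)$'' genuinely needs $u>0$. A secondary subtlety, in parts (1)--(3), is that the coexistence branch must be converted into a scalar \emph{subsolution} — a positive supersolution always exists and would give nothing — in order to extract positivity of a principal eigenvalue.
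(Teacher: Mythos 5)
Your proof is correct, and for parts (1)--(4) it tracks the paper fairly closely; part (5), however, takes a genuinely different route to identify the limit, and the comparison is worth spelling out.

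For parts (1)--(3), the paper's argument is a little leaner: once Lemma \ref{le2.7} gives $u>0$ (or $v>0$) on $\boo$, one has directly $f_1(u,v)<r_1au$, so $u$ is a strict subsolution of the \emph{linear} eigenvalue problem \eqref{2.1} and pairing with $\phi_1$ forces $\lm_1^p(\oo)>0$; there is no need to first bound $u<a$, $v<1$ and introduce the refined coefficient $\gamma_1$. Your eigenfunction pairing is exactly the mechanism the paper uses implicitly when it writes ``It follows that $\lm_1^p(\oo)>0$,'' so this is the same idea expressed more elaborately. For part (4) you choose $(\ep\phi_1,\ep\phi_2)$ as the lower solution while the paper uses $(\theta_1,\theta_2)$; both are legitimate, and the supersolution $(u^*,v^*)$ and the appeal to Lemma \ref{le2.8} for uniqueness are the same.

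The interesting divergence is part (5). You identify the limit $(U,V)$ by a sup/inf fixed-point argument: first establish the uniform positive lower bounds $U\ge a/2$, $V\ge1/2$ on $\R$ via scalar comparison with the logistic states $\theta_i^l$, then evaluate the equations at near-extremal sequences to get $\sup U\le g(\sup U)$ and $\inf U\ge g(\inf U)$, and use the sign of $g(t)-t$ to pinch $\inf U=\sup U=u^*$. The paper instead proves $(\tilde u,\tilde v)$ is constant by a translation argument: for $l>2|x_1|$ the nesting $[-l+2|x_1|,l-2|x_1|]\subset[-l+|x_1|-x_1,\,l-|x_1|-x_1]\subset[-l,l]$ sandwiches $(u_{l-|x_1|}(\cdot+x_1),v_{l-|x_1|}(\cdot+x_1))$ between $(u_{l-2|x_1|},v_{l-2|x_1|})$ and $(u_l,v_l)$, and letting $l\to\yy$ yields $(\tilde u(x+x_1),\tilde v(x+x_1))=(\tilde u(x),\tilde v(x))$ for all $x,x_1$, hence constancy, hence $(u^*,v^*)$. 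Both routes are valid. The paper's has two advantages: it sidesteps the need for any a priori uniform positive lower bound (pointwise positivity from monotonicity in $l$ suffices), and it avoids the limiting argument along near-extremal sequences (which, as you note, genuinely requires $\inf_\R U>0$ to extract $\inf U\ge\Phi(\cdot)$ from $f_1\le0$). Your approach, on the other hand, is more in the spirit of the cooperative sup/inf technique and makes the structure of \eqref{1.2} explicit through $\Phi,\Psi,g$, which is a useful viewpoint to have; just be aware that you are paying for it with the extra scalar comparison step $u_l\ge\theta_1^l$, $v_l\ge\theta_2^l$ (which itself needs the comparison result that a positive supersolution dominates the unique positive solution of the scalar logistic problem -- true, but worth citing or proving).
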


\begin{proof}(1) Clearly, $(0,0)$ is a solution of \eqref{2.6}. If $(u,v)$ is a nonnegative solution of \eqref{2.6} and $u\not\equiv 0$, then $u\in C(\boo)$ and $u>0$ in $\boo$ by Lemma \ref{le2.7}. Thus we have
 \bess
 0&=&\dd d_1\int_\oo J_1(x-y)u(y)\dy-d_1u+f_1(u,v)\nm\\[1mm]
 &<&\dd d_1\int_\oo J_1(x-y)u(y)\dy-d_1u+r_1au,\;\;x\in\boo.
 \eess
It follows that $\lm_1^p(\oo)>0$. This is a contradiction. Similarly, $v\equiv0$.

(2) If $(u,v)$ is a nonnegative continuous solution of \eqref{2.6}, then  $v\equiv 0$ by the result (1). As $\lm_1^p(\oo)>0$, problem \eqref{x2.1}
has a unique positive solution $\theta_1$ and $\theta_1\in C(\boo)$.

The proof of (3) is similar.

(4) Due to $\lm_1^p(\oo)>0$ and $\lm_2^p(\oo)>0$, problems \qq{x2.1} and \eqref{a2.2}
have unique positive solutions $\theta_1$ and $\theta_2$, respectively, and $\theta_i\in C(\boo)$. Moreover, $\theta_1<a/2$ and $\theta_2<1/2$ on $\boo$. Thus
 \bess
\begin{cases}
\dd d_1\int_\oo J_1(x-y)\theta_1(y)\dy-d_1\theta_1+f_1(\theta_1,\theta_2)>0, ~ ~ x\in\boo,\\[3mm]
\dd d_2\int_\oo J_2(x-y)\theta_2(y)\dy-d_2\theta_2+f_2(\theta_1,\theta_2)>0, ~ ~ x\in\boo.
\end{cases}
\eess
Let $(\ol u,\ol v)=(u^*,v^*)$ with $(u^*,v^*)$ uniquely given by \eqref{1.2}. Then $(\ol u,\ol v)>(\theta_1,\theta_2)$ and $(\ol u,\ol v)$ satisfies
 \bess
\begin{cases}
\dd d_1\int_\oo J_1(x-y)\ol u(y)\dy-d_1\ol u+f_1(\ol u,\ol v)\le0, ~ ~ x\in\boo,\\[3mm]
\dd d_2\int_\oo J_2(x-y)\ol v(y)\dy-d_2\ol v+f_2(\ol u,\ol v)\le0, ~ ~ x\in\boo.
\end{cases}
 \eess
By the upper and lower solutions method, \eqref{2.6} has at least one positive solution $(u_\oo,v_\oo)$ and $(\theta_1, \theta_2)\le(u_\oo,v_\oo)\le (\ol u,\ol v)$. By Lemma \ref{le2.8}, $(u_\oo,v_\oo)$ is the unique positive solution of \eqref{2.6} and $u_\oo, v_\oo\in C(\boo)$.

(5) We first show $(u_l,v_l)$ is nondecreasing for all large $l>0$. Let $l>k>0$. Then $(u_l,v_l)$ satisfies
\bess
\begin{cases}
\dd d_1\int_{-k}^{k}J_1(x-y)u_l(y)\dy-d_1u_l+f_1(u_l,v_l)\le0, ~ ~ x\in[-k,k],\\[3mm]
\dd d_2\int_{-k}^{k}J_2(x-y)v_l(y)\dy-d_2v_l+f_2(u_l,v_l)\le0, ~ ~ x\in[-k,k].
\end{cases}
\eess
Arguing as in the proof of Lemma \ref{le2.8}, we can show that $(u_l,v_l)\ge (u_k,v_k)$.

From the arguments in the proof of (4), we know the unique positive solution $(u_l,v_l)$ of \eqref{2.6} satisfies $(u_l,v_l)\le (u^*,v^*)$ in $\boo$. Thus $(\tilde u(x),\tilde v(x)):=\lim_{l\to\yy}(u_l(x),v_l(x))$ is well defined for all $x\in\mathbb{R}$, and $(\tilde u(x),\tilde v(x))$ is positive. Using the dominated convergence theorem, we see $(\tilde u,\tilde v)$ satisfies
\bess
\begin{cases}
\dd d_1\int_{-\yy}^{\yy}J_1(x-y)\tilde u(y)\dy-d_1\tilde u+f_1(\tilde u,\tilde v)=0, ~ ~ x\in\mathbb{R},\\[3mm]
\dd d_2\int_{-\yy}^{\yy}J_2(x-y)\tilde v(y)\dy-d_2\tilde v+f_2(\tilde u,\tilde v)=0, ~ ~ x\in\mathbb{R}.
\end{cases}
\eess
We now show $(\tilde u,\tilde v)\equiv(u^*,v^*)$. Obviously, it is sufficient to show that $\tilde u$ and $\tilde v$ are positive constants since system $f_1(u,v)=0$ and $f_2(u,v)=0$ only has a unique positive root $(u^*,v^*)$. For the given  $x_1\in\mathbb{R}\setminus\{0\}$. When $l>2|x_1|$, it is easy to verify that
\[[-l+2|x_1|,l-2|x_1|]\subset[-l+|x_1|-x_1,l-|x_1|-x_1]\subset[-l,l].\]
As in the proof of monotonicity, we can see that for $x\in[-l+2|x_1|,l-2|x_1|]$,
\[(u_{l-2|x_1|}(x),v_{l-2|x_1|}(x))\le(u_{l-|x_1|}(x+x_1),v_{l-|x_1|}(x+x_1))\le(u_{l}(x),v_{l}(x)).\]
Letting $l\to\yy$, we have $(\tilde u(x+x_1),\tilde v(x+x_1))=(\tilde u(x),\tilde v(x))$ for all $x\in\mathbb{R}$. Setting $x=0$, we get $(\tilde u(x_1),\tilde v(x_1))=(\tilde u(0),\tilde v(0))$. Thus the assertion (5) is obtained. The proof is finished.
\end{proof}

Next we consider the longtime behaviors of the following evolving problem on fixed domain
\bes\label{2.10}
\begin{cases}
\dd u_t=d_1\int_\oo J_1(x-y)u(t,y)\dy-d_1u+f_1(u,v), ~ ~ t>0, ~ x\in\boo,\\[3mm]
\dd v_t=d_2\int_\oo J_2(x-y)v(t,y)\dy-d_2v+f_2(u,v), ~ ~ t>0, ~ x\in\boo,\\[2mm]
(u(0,x),v(0,x))\in [C(\boo)]^2, ~ u(0,x)\ge,\not\equiv0, ~ v(0,x)\ge,\not\equiv0, ~ x\in\boo.
\end{cases}
\ees

\begin{proposition}\label{p2.5}Let $(u,v)$ be the unique solution of \eqref{2.10}, and $\theta_i(x)$ be given in Lemma \ref{l2.3}, $i=1,2$. Then the following statements hold.
\begin{enumerate}[$(1)$]
  \item If $\lm_1^p(\oo)\le0$ and $\lm_2^p(\oo)\le0$, then $\lim_{t\to\yy}(u(t,x),v(t,x))=(0,0)$ uniformly in $\boo$.
  \item If $\lm_1^p(\oo)>0$ and $\lm_2^p(\oo)\le0$, then $\lim_{t\to\yy}(u(t,x),v(t,x))=(\theta_1(x),0)$ uniformly in $\boo$.
  \item If $\lm_1^p(\oo)\le0$ and $\lm_2^p(\oo)>0$, then $\lim_{t\to\yy}(u(t,x),v(t,x))=(0,\theta_2(x))$ uniformly in $\boo$.
\item If $\lm_1^p(\oo)>0$ and $\lm_2^p(\oo)>0$, then $\lim_{t\to\yy}(u(t,x),v(t,x))=(u_\oo(x),v_\oo(x))$ uniformly in $\boo$, where $(u_\oo(x),v_\oo(x))$ is the unique positive solution of \eqref{2.6}.
\end{enumerate}
\end{proposition}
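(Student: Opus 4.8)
The plan is to combine a scalar comparison argument with the order-preserving structure of \eqref{2.10}. Three ingredients will be used. (i) Since $\partial f_1/\partial v=r_1bu^2/(1+bv)^2\ge0$ and $\partial f_2/\partial u=r_2qv^2/(1+qu)^2\ge0$, the system in \eqref{2.10} is cooperative; hence its solution semiflow on $[C(\boo)]^2$ is order-preserving, a time-independent subsolution (resp.\ supersolution) of \eqref{2.6} used as an initial datum generates a nondecreasing (resp.\ nonincreasing) orbit, and a bounded monotone orbit converges --- pointwise, then uniformly on the compact set $\boo$ by Dini's theorem (the nonlocal evolution keeps the profiles continuous in $x$) --- to a nonnegative solution of \eqref{2.6}. (ii) For $u,v\ge0$ one has the elementary pinchings $r_1u(a-2u)\le f_1(u,v)\le r_1u(a-u)$ and $r_2v(1-2v)\le f_2(u,v)\le r_2v(1-v)$, and moreover $f_1(u,v)\le r_1u(a-c_\ep u)$ whenever $0\le v\le\ep$, with $c_\ep:=1+\frac1{1+b\ep}\uparrow2$ as $\ep\downarrow0$, and symmetrically $f_2(u,v)\le r_2v(1-c'_\ep v)$ whenever $0\le u\le\ep$, with $c'_\ep:=1+\frac1{1+q\ep}\uparrow2$. (iii) For the scalar nonlocal logistic $w_t=d\int_\oo P(x-y)w(t,y)\dy-dw+w(\ap-\kappa w)$ on $\oo$ with $w(0,\cdot)\ge,\not\equiv0$, it is classical (and also recoverable by the monotone scheme in (i)) that $w(t,\cdot)\to0$ uniformly if $\lambda(d,P,\ap,\oo)\le0$, while $w(t,\cdot)$ tends uniformly to the unique positive steady state if $\lambda(d,P,\ap,\oo)>0$; here the positive steady state of the $r_1w(a-\kappa w)$-logistic is $(2/\kappa)\theta_1$ (by the scaling $w\mapsto(2/\kappa)w$ in \eqref{x2.1}), and that of the $r_2w(1-\kappa w)$-logistic is $(2/\kappa)\theta_2$ (by the same scaling in \eqref{a2.2}).

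By the upper estimates in (ii), $u$ and $v$ are subsolutions of the scalar logistic equations whose reaction terms linearize at the origin to $r_1a\,u$ and $r_2v$, i.e.\ with thresholds $\lm_1^p(\oo)$ and $\lm_2^p(\oo)$; hence $u(t,\cdot)\to0$ uniformly whenever $\lm_1^p(\oo)\le0$, and $v(t,\cdot)\to0$ uniformly whenever $\lm_2^p(\oo)\le0$. This settles statement (1). For (2) we thus have $v(t,\cdot)\to0$; for $u$, the lower bound $f_1\ge r_1u(a-2u)$ makes $u$ a supersolution of the $r_1u(a-2u)$-logistic (threshold $\lm_1^p(\oo)>0$), so $\liminf_{t\to\yy}u(t,\cdot)\ge\theta_1$ uniformly, while for each $\ep>0$, once $v\le\ep$ on $\boo$, $u$ is a subsolution of the $r_1u(a-c_\ep u)$-logistic, whence $\limsup_{t\to\yy}u(t,\cdot)\le(2/c_\ep)\theta_1$ uniformly; letting $\ep\downarrow0$ gives $u(t,\cdot)\to\theta_1$ uniformly. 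Statement (3) is the mirror image: $u(t,\cdot)\to0$ first, after which $v$ is squeezed between the positive steady states of the $r_2v(1-2v)$- and $r_2v(1-c'_\ep v)$-logistics, namely between $\theta_2$ and $(2/c'_\ep)\theta_2$, yielding $v(t,\cdot)\to\theta_2$ uniformly.

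For statement (4), $\lm_i^p(\oo)>0$ for $i=1,2$. As in cases (2)/(3) one first gets $\liminf_{t\to\yy}u(t,\cdot)\ge\theta_1>0$ and $\liminf_{t\to\yy}v(t,\cdot)\ge\theta_2>0$ uniformly on $\boo$. Next, letting $\phi_i>0$ be the principal eigenfunction of \eqref{2.1} associated with $\lm_i^p(\oo)$, a direct computation (this is where $\lm_i^p(\oo)>0$ enters) shows that $(\ep_0\phi_1,\ep_0\phi_2)$ is a strict subsolution of \eqref{2.6} for all small $\ep_0>0$, while $(K_1,K_2)$ with $K_1>a$, $K_2>1$ large enough is a supersolution of \eqref{2.6} dominating the initial data. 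Denote by $\underline U(t)$ and $\overline U(t)$ the solutions of \eqref{2.10} from $(\ep_0\phi_1,\ep_0\phi_2)$ and $(K_1,K_2)$ respectively. By (i), $\underline U$ is nondecreasing and bounded above by $(K_1,K_2)$, and $\overline U$ is nonincreasing and --- the initial data being ordered --- bounded below by $\underline U$; hence each converges uniformly on $\boo$ to a nonnegative solution of \eqref{2.6} which is $\ge(\ep_0\phi_1,\ep_0\phi_2)>(0,0)$ on $\boo$, i.e.\ to a positive solution, necessarily $(u_\oo,v_\oo)$ by Lemmas \ref{le2.8} and \ref{l2.3}(4). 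Finally, the lower estimates give $T>0$ with $(\ep_0\phi_1,\ep_0\phi_2)\le(u(T,\cdot),v(T,\cdot))\le(K_1,K_2)$, so order-preservation yields $\underline U(s,\cdot)\le(u(T+s,\cdot),v(T+s,\cdot))\le\overline U(s,\cdot)$ for all $s\ge0$; as $s\to\yy$ both bounds tend uniformly to $(u_\oo,v_\oo)$, and (4) follows.

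The only genuinely delicate step is case (4): one must exclude that $\overline U(t)$, starting from a large supersolution, descends to a semi-trivial state $(\theta_1,0)$ or $(0,\theta_2)$, or to $(0,0)$. This is exactly what the uniform lower bounds $\liminf u\ge\theta_1$ and $\liminf v\ge\theta_2$ prevent: they force every steady state trapped in the sandwich to be strictly positive, after which the uniqueness of the positive solution of \eqref{2.6} (Lemma \ref{le2.8}) pins it down. A secondary point --- upgrading the pointwise monotone convergence of $\underline U,\overline U$ to uniform convergence --- is handled by Dini's theorem once one notes that the nonlocal flow preserves continuity of the profiles in $x$; and everything rests on the classical large-time behaviour of the scalar nonlocal logistic on a bounded interval, which can itself be obtained by the monotone comparison scheme of (i).
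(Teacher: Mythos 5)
Your proof is correct and, for case (4), takes essentially the same route as the paper: trap $(u,v)$ between the monotone orbits starting from a small eigenfunction subsolution $\ep(\phi_1,\phi_2)$ and a large constant supersolution $(K_1,K_2)$, then identify the common limit as $(u_\oo,v_\oo)$ by uniqueness. For cases (1)--(3) the paper simply declares them ``obvious,'' and your scalar comparison with the logistic barriers $r_1u(a-2u)$, $r_1u(a-c_\ep u)$, etc., is the natural way to fill that gap; your extra preliminary step in (4) (deriving $\liminf u\ge\theta_1$, $\liminf v\ge\theta_2$ before trapping) is not needed once one shifts time so that $(u(0,\cdot),v(0,\cdot))>0$ on the compact $\boo$, as the paper does, but it is harmless and correct.
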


\begin{proof} Here we only prove the conclusion (4) as (1)-(3) are obviously.

Firstly, by the maximum principle, $(u(t,x),v(t,x))>(0,0)$ for $t>0$ and $x\in\boo$. We may assume $(u(0,x),v(0,x))>(0,0)$ for $x\in\boo$.

Let $\phi_1$ and $\phi_2$ be the corresponding positive eigenfunctions of $\lm_1^p(\oo)$ and $\lm_2^p(\oo)$, respectively. It is easy to verify that for small $\ep>0$, there holds
 \bess\begin{cases}
\dd d_1\int_\oo J_1(x-y)\ep\phi_1(y)\dy-d_1\ep\phi_1+f_1(\ep\phi_1,\ep\phi_2)\ge0, ~ ~ x\in\boo,\\[3mm]
\dd d_2\int_\oo J_2(x-y)\ep\phi_2(y)\dy-d_2\ep\phi_2+f_2(\ep\phi_1,\ep\phi_2)\ge0, ~ ~ x\in\boo.
 \end{cases}\eess
Let $0<\ep\ll 1$ and $K_1,K_2\gg 1$ such that $\ep(\phi_1(x),\phi_2(x))\le (u(0,x),v(0,x))\le (K_1,K_2)$, and $f(K_1,K_2)\le0$ and $f_2(K_1,K_2)\le0$. Let $(\bar{u},\bar{v})$ and $(\ud u,\ud v)$ be the solutions of \eqref{2.10} with  initial functions $(K_1,K_2)$ and $\ep(\phi_1,\phi_2)$, respectively. In light of some comparison arguments, $(\ud u,\ud v)\le (u,v)\le(\bar{u},\bar{v})$ in $\boo\times[0,\yy)$, and $(\ud u,\ud v)$ and $(\bar{u},\bar{v})$ are nondecreasing and nonincreasing in $t$, respectively. It is easy to see that $(\ud u(t,x),\ud v(t,x))$ and $(\bar{u}(t,x),\bar{v}(t,x))$ converge to $(u_\oo(x),v_\oo(x))$ uniformly in $\boo$ as $t\to\yy$. Hence $(u(t,x),v(t,x))\to(u_\oo(x),v_\oo(x))$ uniformly in $\boo$ as $t\to\yy$. The proof is ended.
\end{proof}

Next we consider the following steady state problem on half space $[0,\yy)$
 \bes\label{2.11}
\begin{cases}
\dd d_1\int_{0}^{\yy}J_1(x-y)u(y)\dy-d_1u+f_1(u,v)=0, ~ ~ x\in[0,\yy),\\[3mm]
\dd d_2\int_{0}^{\yy}J_2(x-y)v(y)\dy-d_2v+f_2(u,v)=0, ~ ~ x\in[0,\yy).
 \end{cases}\ees

Let $\theta_{1\yy}$ and $\theta_{2\yy}$ be the unique bounded positive solutions of \eqref{2.5} with $(d,P,\alpha,\beta)$ replaced by $(d_1,J_1,r_1a, 2r_1)$ and $(d,P,\alpha,\beta)=(d_2,J_2,r_2, 2r_2)$, respectively.

\begin{theorem}\label{p2.6}Problem \eqref{2.11} has a trivial solution $(0,0)$, two semi-trivial continuous nonnegative solutions $(\theta_{1\yy},0)$ and $(0,\theta_{2\yy})$, and a unique positive solution $(\tilde u,\tilde v)$, where $\theta_{i\yy}$ for $i=1,2$ are uniquely given as above. Moreover, this positive solution $(\tilde u,\tilde v)$ is continuous and strictly increasing in $[0,\yy)$, and $\lim_{x\to\yy}(\tilde u(x),\tilde v(x))=(u^*,v^*)$.
\end{theorem}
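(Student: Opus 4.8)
The plan is to treat the existence and uniqueness of the positive solution $(\tilde u,\tilde v)$ of \eqref{2.11} by a bounded-domain approximation, using the family $(u_l,v_l)$ from Lemma \ref{l2.3}(5), and then to establish the qualitative properties (continuity, monotonicity, limit at $\infty$) by a sliding/comparison argument. First I would set $\oo_l=(-l,l)$ and recall that, since $\lm_i^p((-l,l))\nearrow r_1a>0$ and $r_2>0$ respectively as $l\to\yy$ (by \eqref{x2.2}), for all large $l$ the bounded-domain system \eqref{2.6} on $\oo_l$ has a unique positive solution $(u_l,v_l)$; restricting to $x\in[0,l)$ and comparing with the half-line problem, one checks that $(u_l,v_l)$ restricted to $[0,l)$ is a subsolution (or can be made to sit below) the half-line dynamics, while $(\ol u,\ol v)=(u^*,v^*)$ is a constant supersolution of \eqref{2.11}. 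Monotonicity of $(u_l,v_l)$ in $l$ (shifted appropriately to the half-line geometry, i.e. using domains $[0,l)$ with the nonlocal operator on $(0,\infty)$ truncated) gives a pointwise increasing limit $(\tilde u,\tilde v):=\lim_{l\to\yy}(u_l,v_l)$, which is positive, bounded above by $(u^*,v^*)$, and — by the dominated convergence theorem applied to the integral terms — solves \eqref{2.11}. Continuity of $\tilde u,\tilde v$ on $[0,\yy)$ follows from the implicit function theorem exactly as in the proof of Lemma \ref{le2.8}(4), the same determinant computation showing the Jacobian of $(P_1,P_2)$ in $(u,v)$ is positive wherever $u,v>0$.

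Next I would prove uniqueness of the positive solution among \emph{bounded} solutions. Given two bounded positive solutions $(u_1,v_1)$, $(u_2,v_2)$, both are continuous and, by Lemma \ref{le2.7} (strong maximum principle), satisfy $\inf u_i,\inf v_i>0$ on compact sets; combined with the fact that any bounded positive solution must have $\liminf_{x\to\infty}$ bounded away from $0$ — which I would get by comparison with the one-species problem \eqref{2.5}, forcing $u_i\to u^*$, $v_i\to v^*$ — one can define $k^*=\inf\{k\ge1: k(u_1,v_1)\ge(u_2,v_2)\ \text{on}\ [0,\yy)\}$ and run the identical sweeping argument as in Lemma \ref{le2.8}: if $k^*>1$ there is a contact point (or a contact at infinity, handled by the common limit $(u^*,v^*)$), and plugging into the identity for $u_i$ (or $v_i$) and using the monotone structure of the cooperative nonlinearity $f_1,f_2$ — precisely the inequality $u(1+\tfrac1{1+bv})$ being increasing in $u$ and in $v$ — yields a contradiction; so $k^*=1$, and by symmetry the two solutions coincide. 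The semi-trivial solutions: if $u\equiv0$, then $v$ solves \eqref{2.5} with $(d_2,J_2,r_2,2r_2)$, whose unique bounded positive solution is $\theta_{2\yy}$ by Proposition \ref{p2.4}; similarly for $(\theta_{1\yy},0)$; the trivial solution is immediate.

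For the qualitative properties of $(\tilde u,\tilde v)$: boundedness above by $(u^*,v^*)$ is already in hand; for the strict monotonicity in $x$ I would use the translation (sliding) technique as in Lemma \ref{l2.3}(5) — for $\tau>0$, the pair $(\tilde u(x+\tau),\tilde v(x+\tau))$ restricted to $[0,\yy)$ satisfies the inequalities of a supersolution on $[0,\yy)$ relative to $(\tilde u,\tilde v)$ (because enlarging the domain of the nonlocal integral only increases it), so a $k^*$-type comparison gives $(\tilde u(x+\tau),\tilde v(x+\tau))\ge(\tilde u(x),\tilde v(x))$, hence monotone nondecreasing; strictness then comes from the strong maximum principle applied to, say, $w=\tilde u(\cdot+\tau)-\tilde u$, which satisfies a linear inequality of the form \eqref{3.4} and cannot vanish on an interval without vanishing identically, contradicting $\tilde u(0)=0<\lim\tilde u$. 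Finally, monotone plus bounded gives $\lim_{x\to\infty}(\tilde u,\tilde v)=(u_\infty,v_\infty)$ existing; passing to the limit in \eqref{2.11} (dominated convergence, translating $x\to\infty$) shows $(u_\infty,v_\infty)$ solves the full-line constant system $f_1=f_2=0$, hence equals $(u^*,v^*)$ by uniqueness of that positive root.

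The main obstacle I anticipate is the uniqueness/comparison step on the \emph{unbounded} domain: the sweeping argument needs a contact point, and on $[0,\yy)$ the infimum defining $k^*$ may only be "attained at infinity." Handling that requires first pinning down the boundary behavior at $x=\infty$ — showing every bounded positive solution tends to $(u^*,v^*)$ — which itself seems to need a separate comparison with the scalar half-line problems and a careful use of the cooperative structure (so that $u$ large forces $v$ not too small and vice versa). Making this rigorous, rather than the bounded-domain existence or the IFT-continuity (which are routine given the earlier lemmas), is where the real work lies.
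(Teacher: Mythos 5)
Your proposal follows the same overall structure as the paper's proof (bounded-domain approximation for existence, sliding for monotonicity, $k^*$-sweeping for uniqueness), and you correctly identify the boundary behavior at infinity as the critical step for uniqueness. However, there are two genuine problems.

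First, in the strict-monotonicity step you propose to apply the strong maximum principle to $w=\tilde u(\cdot+\tau)-\tilde u$ and to derive a contradiction from ``$\tilde u(0)=0<\lim\tilde u$''. This is false: the positive solution $\tilde u$ of \eqref{2.11} satisfies $\tilde u(0)>0$, in fact $\inf_{[0,\infty)}\tilde u>0$. The boundary $x=0$ in \eqref{2.11} is not a vanishing point of $\tilde u$; it is only where the integral domain is cut off. So ruling out $w\equiv 0$ requires a different argument, e.g.\ observing that $w\equiv 0$ would force $\tilde u$ to be $\tau$-periodic and nondecreasing, hence constant, which cannot solve \eqref{2.11} on the half-line; or, as the paper does, a direct computation showing that $\int_0^\infty J_1(x_2-y)\tilde u(y)\dy>\int_0^\infty J_1(x_1-y)\tilde u(y)\dy$ for $x_2>x_1$, propagated inductively over intervals of length $\delta$ where $J_1>0$.

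Second, the proposed route to the boundary condition $(u,v)\to(u^*,v^*)$ for an arbitrary bounded positive solution is not correct as sketched. Comparison with the scalar half-line problems \eqref{2.5} only yields a \emph{lower} bound on the liminf (the scalar problems give $\theta_{1\infty}(\infty)=a/2$ and $\theta_{2\infty}(\infty)=1/2$, below $u^*$ and $v^*$), and in a cooperative system the scalar comparison provides no \emph{upper} control whatsoever, which is exactly what is needed. The paper's Step~2 proves $(u,v)\le(u^*,v^*)$ by a direct supremum analysis: if $u_{\sup}>u^*$, evaluating the $u$-equation at (or along a sequence approaching) the supremum yields $a-u_{\sup}-u_{\sup}/(1+bv)\ge 0$, hence $v_{\sup}>v^*$, and repeating for $v$ produces a second positive root of $f_1=f_2=0$, contradicting uniqueness of $(u^*,v^*)$; a separate subcase handles suprema not attained at a finite point. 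You flag this as ``where the real work lies'' and leave it unresolved, but the direction you point (``$u$ large forces $v$ not too small'') is not the inequality the argument actually runs on, so the gap is substantive rather than a routine fill-in.
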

\begin{proof}Certainly, we only need to show the existence and uniqueness of the positive solution and its properties.

{\it Step 1: The existence}. By \qq{x2.2}, there exists $l_0\gg 1$ such that $\lambda(d_1,J_1,r_1a,(0,l))>0$ and $\lambda(d_2,J_2,r_2,(0,l))>0$ for $l\ge l_0$. By  virtue of Lemma \ref{l2.3}, the problem
\bess
\begin{cases}
\dd d_1\int_{0}^{l}J_1(x-y)u(y)\dy-d_1u+f_1(u,v)=0, ~ ~ x\in[0,l],\\[2mm]
\dd d_2\int_{0}^{l}J_2(x-y)v(y)\dy-d_2v+f_2(u,v)=0, ~ ~ x\in[0,l]
\end{cases}
\eess
has a unique positive solution $(u_l,v_l)$, which is continuous for $x\in[0,l]$ and $(u_l,v_l)\le (u^*,v^*)$. Moreover, as in the proof of (5) of Lemma \ref{l2.3}, we can show that $(u_l,v_l)$ is nondecreasing for all large $l>0$. So we can define $(\tilde u(x),\tilde v(x))=\lim_{l\to\yy}(u_l(x),v_l(x))$ for $x\ge0$, and clearly $(0,0)<(\tilde u(x),\tilde v(x))\le(u^*,v^*)$ for all $x\ge0$. Due to the dominated convergence theorem, $(\tilde u,\tilde v)$ satisfies \eqref{2.11}.

Similar to the arguments in the proof of Lemma \ref{le2.8}, we can show $(\tilde u(x),\tilde v(x))$ is continuous in $x\ge0$. Now we show that $(\tilde u(x),\tilde v(x))\to(u^*,v^*)$ as $x\to\yy$. Otherwise, there exist $\ep_0>0$ and a sequence $\{x_n\}$ satisfying $x_n\nearrow\yy$ as $n\to\yy$ such that $\tilde u(x_n)\le u^*-\ep_0$ or $\tilde v(x_n)\le v^*-\ep_0$. Without loss of generality, we suppose $\tilde u(x_n)\le u^*-\ep_0$ for $n\ge1$.

Set $w_n(x)=u_{2x_n}(x+x_n)$ and $z_n(x)=v_{2x_n}(x+x_n)$. It is easy to see that $(w_n,z_n)$ satisfies
  \bess\begin{cases}
 \dd d_1\int_{-x_n}^{x_n}J_1(x-y)w_n(y)\dy-d_1w_n+f_1(w_n,z_n)=0, ~ ~ x\in[-x_n,x_n],\\[3mm]
\dd d_2\int_{-x_n}^{x_n}J_2(x-y)z_n(y)\dy-d_2z_n+f_2(w_n,z_n)=0, ~ ~ x\in[-x_n,x_n].
 \end{cases}\eess
Thus $(w_n,z_n)$ is the unique positive solution of \eqref{2.6} on $[-x_n,x_n]$. It then follows from (5) of Lemma \ref{l2.3} that $(w_n(x),z_n(x))\to(u^*,v^*)$ locally uniformly in $\mathbb{R}$ as $n\to\yy$. Hence for large $n$,
\[(u_{2x_n}(x_n),v_{2x_n}(x_n))=(w_n(0),z_n(0))\ge(u^*-{\ep_0}/{2},v^*-{\ep_0}/{2}).\]
Due to the definition of $(\tilde u,\tilde v)$, we have
\[(\tilde u(x_n),\tilde v(x_n))\ge(u_{2x_n}(x_n),v_{2x_n}(x_n))\ge(u^*-{\ep_0}/{2},v^*-{\ep_0}/{2}).\]
 However, since $\tilde u(x_n)\le u^*-\ep_0$ for $n\ge1$, we derive a contradiction. So \bes
 (\tilde u(x),\tilde v(x))\to(u^*,v^*)\;\;\;\mbox{ as}\;\;x\to\yy.
 \lbl{x2.13}\ees

{\it Step 2; The uniqueness}. Let $(u,v)$ be another bounded positive solution of \eqref{2.11}. We first show that
 \bes
 (u(x),v(x))\to(u^*,v^*)
 \;\;\;\mbox{ as}\;\;x\to\yy.
 \lbl{x2.14}\ees

Analogously, we can show that $(u,v)$ is continuous in $x\ge0$. Moreover, by arguing as in the proof of (5) of Lemma \ref{l2.3}, we can conclude that $(u,v)\ge(u_l,v_l)$ for $x\in[0,l]$ and all large $l$, which, combined with the definition of $(\tilde u,\tilde v)$, yields that $(u,v)\ge(\tilde u,\tilde v)$.

We now show that $(u,v)\le(u^*,v^*)$. By way of contradiction, without loss of generality, we suppose that $u_{\rm sup}:=\sup_{[0,\yy)}u>u^*$. Then there are two cases that need to be considered:\vspace{-1mm}
\begin{enumerate}[$(1)$]
  \item {\it Case 1:} There exists some $x_1\in[0,\yy)$ such that $u(x_1)=u_{\rm sup}>u^*$.\vspace{-1mm}
  \item {\it Case 2:} $u(x)<u_{\rm sup}$ for all $x\ge0$, and there exists a sequence converging to $\yy$ such that $u$ converges to $u_{\rm sup}>u^*$ along this sequence.
\end{enumerate}\vspace{-1mm}

We next show both these two cases are impossible, which implies $u_{\rm sup}\le u^*$.
For Case 1, substituting $x_1$ into the identity of $u$ yields
\bes\label{2.12}
a-u(x_1)-\frac{u(x_1)}{1+bv(x_1)}\ge0.\ees
This indicates that $v(x_1)>v^*$, i.e., $v_{\rm sup}:=\sup_{[0,\yy)}v>v^*$. If there exists some $x_2\ge0$ such that $v(x_2)=\sup_{[0,\yy)}v>v^*$, then putting $x_2$ into the identity of $v$ leads to $1-v(x_2)-\frac{v(x_2)}{1+qu(x_2)}\ge0$. Notice that $u(x_1)\ge u(x_2)$ and $v(x_1)\le v(x_2)$. We can obtain
  \bess
  a-u(x_1)-\frac{u(x_1)}{1+bv(x_2)}\ge0 ~ ~ {\rm and ~ ~ }1-v(x_2)-\frac{v(x_2)}{1+qu(x_1)}\ge0.\eess
It is not hard to show that there exists another positive root of $f_1(u,v)=0$ and $f_2(u,v)$, which is different from $(u^*,v^*)$. This is a contradiction. So in Case 1, $v$ can not achieve its supremum that is large than $v^*$. If $v(x)<v_{\rm sup}$ for all $x\ge0$, then one can find a sequence $\{x_n\}\nearrow\yy$ such that $v(x_n)\to v_{\rm sup}$ as $n\to\yy$. Since $u(x_n)$ is bounded, there exists a subsequence, still denoted by itself, such that $u(x_n)$ converges to some positive constant $u_{\yy}$. Moreover, direct calculations show
\bess
\limsup_{n\to\yy}\kk(\int_{0}^{\yy}J_2(x_n-y)v(y)\dy
-v(x_n)\rr)\le\limsup_{n\to\yy}(v_{\rm sup}-v(x_n))=0,
\eess
which, combined with the identity of $v$, yields that
\[1-v_{\rm sup}-\frac{v_{\rm sup}}{1+qu_{\yy}}\ge0.\]
Together with \eqref{2.12}, we also can derive a contradiction. Thus Case 1 is impossible. Similarly, we can show Case 2 is also impossible, and the details are omitted here. Hence $(u,v)\le(u^*,v^*)$, which combined with $(u,v)\ge(\tilde u,\tilde v)$ gives \qq{x2.14}.

With the aid of \qq{x2.13} and \qq{x2.14}, we can argue as in the proof of Lemma \ref{le2.8} to show the uniqueness. Since the modifications are obvious, we omit the details.

{\it Step 3: The monotonicity of $(\tilde u,\tilde v)$}. For any $x\ge0$ and $\delta>0$, denote $(\tilde u(x+\delta),\tilde v(x+\delta))$ by $(u^{\delta}_{\yy}(x),v^{\delta}_{\yy}(x))$. Then $(u^{\delta}_{\yy}(x),v^{\delta}_{\yy}(x))$ satisfies
 \bess\begin{cases}
 \dd d_1\int_{-\delta}^{\yy}J_1(x-y)u^{\delta}_{\yy}(y)\dy-d_1u^{\delta}_{\yy}
 +f_1(u^{\delta}_{\yy},v^{\delta}_{\yy})=0, ~ ~ x\in[0,\yy),\\[3mm]
\dd d_2\int_{-\delta}^{\yy}J_2(x-y)v^{\delta}_{\yy}(y)\dy-d_2v^{\delta}_{\yy}
+f_2(u^{\delta}_{\yy},v^{\delta}_{\yy})=0, ~ ~  x\in[0,\yy),
 \end{cases}\eess
which implies
 \bess\begin{cases}
 \dd d_1\int_{0}^{\yy}J_1(x-y)u^{\delta}_{\yy}(y)\dy-d_1u^{\delta}_{\yy}
 +f_1(u^{\delta}_{\yy},v^{\delta}_{\yy})\le0, ~ ~ x\in[0,\yy),\\[3mm]
\dd d_2\int_{0}^{\yy}J_2(x-y)v^{\delta}_{\yy}(y)\dy-d_2v^{\delta}_{\yy}
+f_2(u^{\delta}_{\yy},v^{\delta}_{\yy})\le0, ~ ~  x\in[0,\yy).
\end{cases}
\eess
Then we can argue as in the proof of Lemma \ref{le2.8} to show that $(u^{\delta}_{\yy}(x),v^{\delta}_{\yy}(x))\ge(\hat{u}(x),\hat{v}(x))$ for all $x\ge0$. Thus $(\tilde u(x),\tilde v(x))$ is nondecreasing in $x\ge0$.

Now we show that $(\tilde u,\tilde v)$ is strictly increasing in $x\ge0$. We only prove the monotonicity of $\tilde u$ since one can similarly prove the case for $\tilde v$. Thanks to {\bf (J)}, there exists a $\delta>0$ such that $J_1(x)>0$ for $x\in[-\delta,\delta]$. Hence it is sufficient to show that $\tilde u$ is strictly increasing in $[k\delta,(k+1)\delta]$ for all nonnegative integer $k$. If there exist $x_1$ and $x_2$ with $0\le x_1<x_2\le \delta$ such that $\tilde u(x_1)=\tilde u(x_2)$, then by the equation of $\tilde u$, we have
\bess
d_1\int_{0}^{\yy}J_1(x_1-y)\tilde u(y)\dy-\frac{r_1\tilde u^2(x_1)}{1+b\tilde v(x_1)}=d_1\int_{0}^{\yy}J_1(x_2-y)\tilde u(y)\dy-\frac{r_1\tilde u^2(x_1)}{1+b\tilde v(x_2)},
\eess
which, combined with $\tilde v(x_1)\le \tilde v(x_2)$, yields
\bes\label{2.14}
\int_{0}^{\yy}J_1(x_1-y)\tilde u(y)\dy\ge\int_{0}^{\yy}J_1(x_2-y)\tilde u(y)\dy.\ees
However, a straightforward computation gives
\bess
&&\int_{0}^{\yy}\!J_1(x_2-y)\tilde u(y)\dy-\int_{0}^{\yy}\!J_1(x_1-y)\tilde u(y)\dy\\[1mm]
&=&\int_{-x_2}^{\yy}\!J_1(y)\tilde u(y+x_2)\dy-\int_{-x_1}^{\yy}\!J_1(y)\tilde u(y+x_1)\dy\\[1mm]
&=&\int_{-x_2}^{-x_1}\!J_1(y)\tilde u(y+x_2)\dy+\int_{-x_1}^{\yy}\!J_1(y)\tilde u(y+x_2)\dy
-\int_{-x_1}^{\yy}\!J_1(y)\tilde u(y+x_1)\dy\\[1mm]
&\ge&\int_{-x_2}^{-x_1}\!J_1(y)\tilde u(y+x_2)\dy>0.
\eess
So this contradiction shows that $\tilde u$ is strictly increasing in $[0,\delta]$.

Arguing inductively, we assume that $\tilde u$ is strictly increasing in $[k\delta,(k+1)\delta]$. If there exist $x_1$ and $x_2$ with $(k+1)\delta\le x_1<x_2\le(k+2)\delta$ such that $\tilde u(x_1)=\tilde u(x_2)$, then by the equation of $\tilde u$, we see \eqref{2.14} holds. Moreover, since $J_1(x)>0$ for $x\in[-\delta,\delta]$ and $\tilde u(x_2-\delta)-\tilde u(x_1-\delta)>0$, we have
\bess
&&\int_{0}^{\yy}\!J_1(x_2-y)\tilde u(y)\dy-\int_{0}^{\yy}\!J_1(x_1-y)\tilde u(y)\dy\\[1mm]
&=&\int_{-x_2}^{\yy}\!J_1(y)\tilde u(y+x_2)\dy-\int_{-x_1}^{\yy}\!J_1(y)\tilde u(y+x_1)\dy\\[1mm]
&=&\int_{-x_2}^{-x_1}\!J_1(y)\tilde u(y+x_2)\dy+\int_{-x_1}^{\yy}\!J_1(y)\tilde u(y+x_2)\dy
-\int_{-x_1}^{\yy}\!J_1(y)\tilde u(y+x_1)\dy\\[1mm]
&\ge&\int_{-\delta}^{0}\!J_1(y)(\tilde u(y+x_2)-\tilde u(y+x_1))\dy>0.
 \eess
As above, we can get a contradiction, and thus $\tilde u$ is strictly increasing in $[0,\yy)$. The proof is complete.
\end{proof}

\section{Longtime behaviors of \eqref{1.1}}
In this section, we investigate the longtime behaviors of \eqref{1.1}. Firstly, taking advantage of the similar arguments as in the proofs of \cite{DN20,NV,LWWcpaa}, we can prove \eqref{1.1} admits a unique global solution $(u,v,s_1,s_2)$. Moreover,  for any $T>0$, $(u,v,s_1,s_2)\in C(D^T_1)\times C(D^T_2)\times [C^1([0,T])]^2$ where $D^T_i:=\{(t,x):0\le t\le T, ~ 0\le x\le s_i(t)\}$ for $i=1,2$, $(0,0)\le(u(t,x),v(t,x))\le (\max\{\|u_0\|_{\yy},u^*\},\max\{\|v_0\|_{\yy},v^*\})$, and $s'_i(t)>0$ for $t>0$ and $x\in[0,s_i(t)]$ with $i=1,2$. Hence $s_i(\yy):=\lim_{t\to\yy}s_i(t)$ for $i=1,2$ are well defined, and $s_i(\yy)\in(s_{i0},\yy]$. If $s_1(\yy)=\yy$ ($s_2(\yy)=\yy$), we call the spreading of $u$ ($v$); otherwise, we call vanishing.

In the following, we study the longtime behaviors of \eqref{1.1} under spreading and vanishing cases. If $J_i$ satisfies {\bf (J1)} for $i=1,2$, we denote by
 \bes\begin{cases}
 \ud c_1:=c(d_1,J_1,r_1a,2r_1,\mu_1),\;\;\bar c_1:=c(d_1,J_1,r_1a,r_1,\mu_1),\\
 \ud c_2:=c(d_2,J_2,r_2,2r_2,\mu_2),\;\;\bar c_2:=c(d_2,J_2,r_2,r_2,\mu_2)
 \end{cases}\lbl{x3.1}\ees
the speeds of semi-waves of \eqref{2.2} with $(d,P,\alpha,\beta,\mu)$ replaced by  $(d_1,J_1,r_1a,2r_1,\mu_1)$, $(d_1,J_1,r_1a,r_1,\mu_1)$, $(d_2,J_2,r_2,2r_2,\mu_2)$ and $(d_2,J_2,r_2,r_2,\mu_2)$, respectively. For clarity, we denote
 \bess
 \lm_1^p(l):=\lambda(d_1,J_1,r_1a,\oo),\;\;\;
 \lm_2^p(l):=\lambda(d_2,J_2,r_2,\oo)\;\;\;\mbox{when}\;\;\oo=(0,l).\eess

\begin{theorem}\label{t3.1} Let $(u,v,s_1,s_2)$ be the solution of \eqref{1.1}, and $\theta_{i\yy}$ be given in Theorem \ref{p2.6}, $i=1,2$.
\begin{enumerate}[$(1)$]
  \item If $s_1(\yy)<\yy$ $(s_2(\yy)<\yy)$, then $\lm_1^p(s_1(\yy))\le0$ $(\lm_2^p(s_2(\yy))\le0)$. Moreover, $\lim_{t\to\yy}u(t,x)=0$ uniformly in $[0,\yy)$ if $s_1(\yy)<\yy$, and  $\lim_{t\to\yy}v(t,x)=0$ uniformly in $[0,\yy)$ if $s_2(\yy)<\yy$.
\item Suppose $s_1(\yy)=\yy$ and $s_2(\yy)<\yy$.
  \begin{enumerate}[{\rm(i)}]
    \item If $J_1$ satisfies {\bf (J1)}, then
    \bess
    \dd\lim_{t\to\yy}\frac{s_1(t)}{t}=\ud c_1,\; {\rm ~ and ~ }\,\lim_{t\to\yy}\max_{x\in[0,\,ct]}|u(t,x)-\theta_{1\yy}(x)|=0,
    \;\;\forall\;c\in[0,\ud c_1).
    \eess
\item If $J_1$ violates {\bf (J1)}, then
    \bess
     \dd \lim_{t\to\yy}\frac{s_1(t)}{t}=\yy,\; {\rm ~ and ~ }\lim_{t\to\yy}\max_{x\in[0,\,ct]}|u(t,x)-\theta_{1\yy}(x)|=0, \;\; \forall\; c\ge0.
    \eess
  \end{enumerate}
  \item Suppose $s_1(\yy)<\yy$ and $s_2(\yy)=\yy$. Then we have
  \begin{enumerate}[{\rm(i)}]
\item if $J_2$ satisfies {\bf (J1)}, then
  \bess
  \lim_{t\to\yy}\frac{s_2(t)}{t}=\ud c_2,\; {\rm ~ and ~ }\lim_{t\to\yy}\max_{x\in[0,ct]}|v(t,x)-\theta_{2\yy}(x)|=0,
  \;\;\forall\;c\in[0,\ud c_2).
  \eess
 \item if $J_2$ violates {\bf (J1)}, then
   \bess
 \lim_{t\to\yy}\frac{s_2(t)}{t}=\yy,\; {\rm ~ and ~ }\lim_{t\to\yy}\max_{x\in[0,ct]}|v(t,x)-\theta_{2\yy}(x)|=0, \;\;\forall\;c\ge0.
    \eess
  \end{enumerate}
 \item Suppose $s_1(\yy)=\yy$ and $s_2(\yy)=\yy$. Then $\lim_{t\to\yy}(u(t,x),v(t,x))=(\tilde u(x),\tilde v(x))$ locally uniformly in $[0,\yy)$, where $(\tilde u,\tilde v)$ is the unique bounded positive solution of \eqref{2.11}. Moreover, we have the following estimates for the spreading speeds of free boundaries $s_i(t)$ with $i=1,2$.
      \begin{enumerate}[{\rm(i)}]
 \item If $J_1$ satisfies {\bf (J1)}, then
      \bess
      \ud c_1\le \liminf_{t\to\yy}\frac{s_1(t)}{t}\le \limsup_{t\to\yy}\frac{s_1(t)}{t}\le \bar c_1.
      \eess
 \item If $J_1$ violates {\bf (J1)}, then $\lim_{t\to\yy}\frac{s_1(t)}{t}=\yy$.
   \item If $J_2$ satisfies {\bf (J1)}, then
      \bess
 \ud c_2\le \liminf_{t\to\yy}\frac{s_2(t)}{t}\le \limsup_{t\to\yy}\frac{s_2(t)}{t}\le\bar c_2.
      \eess
\item If $J_2$ violates {\bf (J1)}, then $\lim_{t\to\yy}\frac{s_2(t)}{t}=\yy$.
      \end{enumerate}
\end{enumerate}
\end{theorem}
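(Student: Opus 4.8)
The plan is to derive everything by comparison, reducing to the scalar free--boundary problem \qq{2.4} (Propositions~\ref{p2.2}--\ref{p2.4}), the fixed--domain dynamics \qq{2.10} (Proposition~\ref{p2.5}), the half--line steady state \qq{2.11} (Theorem~\ref{p2.6}), and the continuity in the absorption coefficient of the semi--wave speed (Lemma~\ref{l2.1}) and of the profile $\theta_\beta$ in $L^\yy$ (Lemma~\ref{l2.2}). The structural inputs I will use repeatedly are: the system is cooperative, so the usual comparison principle holds; $f_1$ is nondecreasing in $v$ with $f_1(u,0)=r_1u(a-2u)$ and $f_1(u,v)\le r_1u(a-u)\le r_1au$, and symmetrically $f_2(0,v)=r_2v(1-2v)$, $f_2(u,v)\le r_2v(1-v)\le r_2v$; the a priori bound on $(u,v)$ recalled above; and that $\lm_i^p(l)$ is strictly increasing in $l$ with $\lm_1^p(l)\to r_1a>0$, $\lm_2^p(l)\to r_2>0$ as $l\to\yy$ by \qq{x2.2}, so whenever $s_i(\yy)=\yy$ one can fix $T$ with $\lm_i^p(s_i(T))\ge0$ and ``restart'' the scalar problem from time $T$ (with initial datum $u(T,\cdot)$, which is admissible since it is continuous, positive on $[0,s_i(T))$ and vanishes at $s_i(T)$).

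\emph{Part $(1)$.} If $s_1(\yy)<\yy$ then $\int_0^\yy s_1'(t)\,\dt=s_1(\yy)-s_{10}<\yy$. I would first show $\max_{[0,s_1(t)]}u(t,\cdot)\to0$ by the usual compactness argument: if not, choose $t_n\to\yy$ and $x_n$ with $u(t_n,x_n)\ge\ep_0$, shift time by $t_n$, pass to a limit $(u_\yy,v_\yy)$ of the shifted solutions solving the nonlocal system on the limiting interval $[0,s_1(\yy))$, and observe $\int_0^T\!\!\int_0^{s_1(\yy)}\!\!\int_{s_1(\yy)}^\yy J_1(x-y)\,u_\yy(t,x)\,\dy\dx\dt=\lim_n(s_1(t_n+T)-s_1(t_n))=0$, so $u_\yy$ vanishes on an interval adjacent to $s_1(\yy)$; a time--dependent version of the strong maximum principle (Lemma~\ref{le2.7}) then forces $u_\yy\equiv0$, contradicting $u_\yy\not\equiv0$. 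Given this uniform decay, if $\lm_1^p(s_1(\yy))>0$ then $\lm_1^p(s_1(T))>0$ for $T$ large; since $f_1(u,v)\ge f_1(u,0)=r_1u(a-2u)\ge(r_1a-o(1))u$ as $t\to\yy$, the restriction of $u$ to the fixed interval $[0,s_1(T)]$ is eventually a supersolution of a linear nonlocal eigenvalue problem on $(0,s_1(T))$ with positive principal eigenvalue, and comparison would force exponential growth of $u$, a contradiction. Hence $\lm_1^p(s_1(\yy))\le0$; the case $s_2(\yy)<\yy$ is identical.

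\emph{Parts $(2)$, $(3)$, and the speed estimates in $(4)$.} Suppose $s_1(\yy)=\yy$, $s_2(\yy)<\yy$; by Part $(1)$, $v(t,\cdot)\to0$ uniformly. Using $f_1(u,v)\ge f_1(u,0)=r_1u(a-2u)$ and restarting at $T$ with $\lm_1^p(s_1(T))\ge0$, $(u,s_1)$ is a supersolution from time $T$ of \qq{2.4} with $(d,P,\alpha,\beta,\mu)=(d_1,J_1,r_1a,2r_1,\mu_1)$, which spreads (Proposition~\ref{p2.4}); this gives $\liminf_{t\to\yy}s_1(t)/t\ge\ud c_1$ and $\liminf_{t\to\yy}\min_{[0,\,ct]}(u-\theta_{1\yy})\ge0$ for $c<\ud c_1$, with the sharper versions when $J_1$ violates {\bf(J1)}. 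For the upper bound, given $\ep>0$ pick $T_\ep$ with $v\le\ep$ for $t\ge T_\ep$; then $f_1(u,v)\le f_1(u,\ep)=r_1u(a-\gamma_\ep u)$ with $\gamma_\ep:=1+1/(1+b\ep)\nearrow2$ as $\ep\searrow0$, so $(u,s_1)$ is a subsolution from time $T_\ep$ of \qq{2.4} with $\beta=r_1\gamma_\ep$; since $s_1(\yy)=\yy$ this scalar problem spreads, so $\limsup_{t\to\yy}s_1(t)/t\le c(d_1,J_1,r_1a,r_1\gamma_\ep,\mu_1)$ and $\limsup_{t\to\yy}\max_{[0,\,ct]}(u-\theta_{r_1\gamma_\ep})\le0$ in the corresponding range of $c$. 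Letting $\ep\to0$ and invoking Lemmas~\ref{l2.1}--\ref{l2.2} closes both bounds at $\ud c_1$ and $\theta_{1\yy}$, proving $(2)$; $(3)$ is symmetric, with the roles of $(u,v)$ and $(f_1,f_2)$ exchanged, $f_2(\ep,v)=r_2v(1-\delta_\ep v)$, $\delta_\ep\nearrow2$. In Part $(4)$, $s_1(\yy)=s_2(\yy)=\yy$, and the same one--sided comparisons give $\liminf s_1/t\ge\ud c_1$, $\liminf s_2/t\ge\ud c_2$ (and the limit $=\yy$ when the relevant {\bf(J1)} fails), while $f_1(u,v)\le r_1u(a-u)$ and $f_2(u,v)\le r_2v(1-v)$ give $\limsup s_1/t\le\bar c_1$, $\limsup s_2/t\le\bar c_2$.

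\emph{Convergence in $(4)$, and the main difficulty.} For the lower bound on $(u,v)$: for each large $L$ (so $\lm_1^p((0,L))>0$, $\lm_2^p((0,L))>0$) and once $s_i(T_L)>L$, the restriction of $(u,v)$ to $[0,L]$ is a supersolution from time $T_L$ of \qq{2.10} on $\oo=(0,L)$ (because $\int_0^{s_i(t)}\ge\int_0^L$ and $u,v\ge0$), so Proposition~\ref{p2.5}$(4)$ gives $\liminf_{t\to\yy}(u,v)\ge(u_L,v_L)$ uniformly on $[0,L]$, with $(u_L,v_L)$ the positive solution of \qq{2.6} on $(0,L)$; letting $L\to\yy$, the monotone construction in Step~1 of the proof of Theorem~\ref{p2.6} gives $(u_L,v_L)\nearrow(\tilde u,\tilde v)$, hence $\liminf_{t\to\yy}(u,v)\ge(\tilde u,\tilde v)$ locally uniformly. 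For the upper bound: extend $u,v$ by zero beyond $s_1(t),s_2(t)$; since $f_i$ vanishes at the trivial value and the nonlocal terms are nonnegative, $(u,v)$ is a subsolution (in the appropriate sense, across the moving interface) of the half--line problem $U_t=d_1\int_0^\yy J_1(x-y)U\,\dy-d_1U+f_1(U,V)$, $V_t=d_2\int_0^\yy J_2(x-y)V\,\dy-d_2V+f_2(U,V)$ on $[0,\yy)$; comparing with its solution $(U,V)$ started from constants $(M_1,M_2)$ large enough to be spatially constant supersolutions -- so $(U,V)$ is nonincreasing in $t$ and $(U,V)\ge(u,v)$ -- we get $(U,V)\searrow(U_\yy,V_\yy)$, a bounded nonnegative solution of \qq{2.11} with $(U_\yy,V_\yy)\ge\limsup_{t\to\yy}(u,v)\ge(\tilde u,\tilde v)>0$, hence positive, hence $(U_\yy,V_\yy)=(\tilde u,\tilde v)$ by the uniqueness in Theorem~\ref{p2.6}; therefore $\limsup_{t\to\yy}(u,v)\le(\tilde u,\tilde v)$. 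The two bounds coincide and Dini's theorem upgrades the convergence to local uniformity. I expect the two delicate points to be the uniform decay in Part~$(1)$, where the moving boundary and the merely $L^\yy$ spatial regularity of $u$ make the limit--equation argument technical, and in Part~$(4)$ the legitimacy of the half--line subsolution comparison together with the invocation of the uniqueness of the positive steady state from Theorem~\ref{p2.6} -- precisely the step where the non-constant steady states rule out the usual iteration scheme.
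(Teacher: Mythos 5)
Your Parts (2), (3), and (4) follow essentially the same route as the paper: lower bounds via restarting the scalar free-boundary problem \qq{2.4} with $\beta=2r_1$ (resp. $2r_2$) once $\lambda_1^p(s_1(T))>0$; upper bounds via $v\le\ep$ giving a supersolution comparison with \qq{2.4} for $\beta=r_1\tfrac{2+b\ep}{1+b\ep}$ (your $r_1\gamma_\ep$ is the same coefficient) followed by $\ep\to0$ and Lemmas \ref{l2.1}--\ref{l2.2}; and in Part (4) the squeeze between the bounded-domain problems \qq{2.10}, whose positive steady states $(u_l,v_l)$ increase to $(\tilde u,\tilde v)$, and the monotone half-line evolution started from a large constant, whose limit is identified with $(\tilde u,\tilde v)$ through the uniqueness in Theorem \ref{p2.6}. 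The speed estimates in (4) via $f_1(u,v)\le r_1u(a-u)$ and $f_2(u,v)\le r_2v(1-v)$ are also identical to the paper's.

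Part (1) is where you take a genuinely different route, and it is the more delicate one. You propose to first prove $u\to0$ uniformly by the compactness/rigidity argument (exploiting $\int_0^\yy s_1'(t)\,\dt<\yy$ to kill the boundary flux in the limit and then invoking a strong maximum principle for the limiting evolution on $[0,s_1(\yy))$), and only afterwards extract $\lambda_1^p(s_1(\yy))\le0$ by noting that a positive eigenvalue would produce a positive lower bound on $u$ via the fixed-domain problem on $[0,s_1(T)]$. The paper reverses the order and avoids the compactness machinery entirely: it obtains $\lambda_1^p(s_1(\yy))\le0$ directly by comparing $(u(t+T,\cdot),s_1(t+T))$ against the scalar problem \qq{2.4} with $(\alpha,\beta)=(r_1a,2r_1)$, which must spread if $\lambda_1^p(s_1(T))>0$, contradicting $s_1(\yy)<\yy$; then $u\to0$ follows by a one-line comparison with the fixed-domain logistic on $[0,s_1(\yy)]$, which decays once its principal eigenvalue is nonpositive. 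Your route is workable but the heaviest step in your version is hand-waved: extracting the time-shifted limit $(u_\yy,v_\yy)$ requires equicontinuity arguments since nonlocal diffusion does not regularize in $x$, and Lemma \ref{le2.7} is stated only for the elliptic problem, so you would need to spell out the parabolic version you invoke. A minor slip in wording: the comparison at the end of your Part (1) yields a positive lower bound on $u$ (approach to the positive steady state $\theta_1$), not ``exponential growth''; the contradiction with $u\to0$ is still correct, but the stated mechanism is not.
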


\begin{proof}

(1) We only prove the assertion for $u$ and $s_1$ since the other case can be dealt with similarly. Arguing on the contrary that $\lm_1^p(s_1(\yy))>0$, by continuity, there is $T>0$ such that $\lm_1^p(s_1(T))>0$. Clearly, $(u(t+T),s_1(t+T))$ is an upper solution of \eqref{2.4} with $(d,P,\alpha,\beta,\mu, h_0,w_0)$ replaced by $(d_1,J_1,r_1a,2r_1,\mu_1,s_1(T),u(T,x))$. Since $\lm_1^p(s_1(T))>0$, by Proposition \ref{p2.4}, spreading happens for \eqref{2.4}, i.e., $\lim_{t\to\yy}h(t)=\yy$. However, as $s_1(t+T)\ge h(t)$, we have $s_1(\yy)=\yy$. This is a contradiction. So $\lm_1^p(s_1(\yy))\le0$.

Consider problem
\bess
\begin{cases}
\dd\bar{u}_t=d_1\int_{0}^{s_1(\yy)}J_1(x-y)\bar{u}(t,y)\dy-d_1\bar{u}+r_1\bar{u}(a-\bar{u}), ~ ~ t>0, ~ x\in[0,s_1(\yy)],\\
\bar{u}(0,x)=\|u_0\|_{\yy}.
\end{cases}
\eess
Thanks to $\lm_1^p(s_1(\yy))\le0$, we have $\lim_{t\to\yy}\bar{u}(t,x)=0$ uniformly in $[0,s_1(\yy)]$. A comparison consideration yields $u(t,x)\le \bar{u}(t,x)$ for $t\ge0$ and $x\in[0,s_1(\yy)]$. Thus, $\lim_{t\to\yy}u(t,x)=0$ uniformly in $[0,\yy)$ as $u(t,x)=0$ for $x\ge s_1(t)$.

(2)  By virtue of $s_1(\yy)=\yy$ and \qq{x2.2}, we can find a large $T>0$ such that $\lm_1^p(s_1(T))>0$. Let $(\ud u,\ud s_1)$ be the unique solution of \eqref{2.4} with $(d,P,\alpha,\beta,\mu, h_0,w_0)=(d_1,J_1,r_1a,2r_1,\mu_1,s_1(T),u(T,x))$. In light of a comparison method, we see $(u(t+T,x),s_1(t+T))\ge(\ud u(t,x),\ud s_1(t))$ for $t\ge0$ and $x\in[0,\ud s_1(t)]$. By Proposition \ref{p2.4}, we have
  \bess
\lim_{t\to\yy}\frac{\ud s_1(t)}{t}=\ud c_1,\; {\rm ~ and ~ }\lim_{t\to\yy}\max_{x\in[0,ct]}|\ud u(t,x)-\theta_{1\yy}(x)|=0,\;\;\forall\;
c\in[0,\ud c_1).
 \eess
 Hence
\bes\label{3.1}
\liminf_{t\to\yy}\frac{ s_1(t)}{t}\ge\ud c_1,\; {\rm ~ and ~ }\liminf_{t\to\yy}u(t,x)\ge\theta_{1\yy}(x)\;\;\mbox{
uniformly in}\;\; x\in[0,ct],\,\,\forall\;c\in[0,\ud c_1).\ees

On the other hand, owing to $s_2(\yy)<\yy$ and the statement (1), $\lim_{t\to\yy}v(t,x)=0$ uniformly in $[0,\yy)$. For the small $\ep>0$, there exists a large $T>0$ such that $v(t,x)\le \ep$ for $t\ge T$ and $x\ge0$. Hence, $(u,s_1)$ satisfies
 \bess
\left\{\begin{aligned}
 & u_t\le d_1\dd\int_{0}^{s_1(t)}J_1(x-y)u(t,y)\dy-d_1 u+r_1u\kk(a-u-\frac{u}{1+b\ep}\rr), && t>T,~x\in[0,s_1(t)),\\
&u(t,x)=0, && t>T,~ x\ge s_1(t),\\
&s'_1(t)=\mu_1\dd\int_{0}^{s_1(t)}\!\!\int_{s_1(t)}^{\infty}
J_1(x-y)u(t,x)\dy\dx, && t>T,\\
&s_1(T)>0,\;\;  u(T,x)>0, &&x\in[0,s_1(T)).
\end{aligned}\right.
 \eess
 Let $(\bar{u},\bar{s}_1)$ be the unique solution of \eqref{2.4} with $(d_1,J_1,r_1a,\frac{2+b\ep}{1+b\ep}r_1,\mu_1,s_1(T),u(T,x))$ in place of $(d,P,\alpha,\beta,\mu,h_0,w_0)$.
 Clearly, $(u(t+T,x),s_1(t+T))\le(\bar{u}(t,x),\bar{s}_1(t))$ for $t\ge0$ and $x\in[0,s_1(t+T)]$. Together with $s_1(\yy)=\yy$, we have $\bar{s}_1(t)\to\yy$ as $t\to\yy$, which implies that spreading happens for $(\bar{u},\bar{s}_1)$. It then follows from Proposition \ref{p2.4} that
 \bes\label{3.2}
 \lim_{t\to\yy}\frac{\bar{s}_1(t)}{t}=c\kk(d_1,\,J_1,\,r_1a,\,r_1\frac{2+b\ep}{1+b\ep},\,\mu_1\rr) \; {\rm ~ and ~ }\;\lim_{t\to\yy}\max_{x\in[0,ct]}|\bar{u}(t,x)-\theta_{1\ep}(x)|=0,
 \ees
where $0\le c<c(d_1,J_1,r_1a,r_1\frac{2+b\ep}{1+b\ep},\mu_1)$ and $\theta_{1\ep}$ is the unique bounded positive solution of \eqref{2.5} with $(d,P,\alpha,\beta)$ replaced by $(d_1,J_1,r_1a, r_1\frac{2+b\ep}{1+b\ep})$. By Lemma \ref{l2.1} and the arbitrariness of $\ep$, we have $\limsup_{t\to\yy}s_1(t)/t\le\ud c_1$. Together with \eqref{3.1}, we obtain $\lim_{t\to\yy}s_1(t)/t=\ud c_1$.

Using Lemma \ref{l2.1}, \eqref{3.2} and $u(t+T,x)\le \bar{u}(t,x)$ for $t\ge0$ and $x\in[0,s_1(t)]$, we have
  \bes\label{3.3}\limsup_{t\to\yy}u(t,x)\le \theta_{1\ep}(x) ~ ~ {\rm uniformly ~ in ~ }[0,ct],\;\;\forall\;c\in[0,\ud c_1).\ees
By Lemma \ref{l2.2}, for the given $0<\delta\ll 1$, we can take $0<\ep\ll 1$ such that $\theta_{1\ep}\le \theta_{1\yy}+\delta$ for $x\ge0$, which, combined with \eqref{3.3}, yields that $\limsup_{t\to\yy}u(t,x)\le \theta_{1\yy}(x)+\delta$ uniformly in $[0,ct]$ for any given $c\in[0,\ud c_1)$. The arbitrariness of $\delta$ implies that $\limsup_{t\to\yy}u(t,x)\le \theta_{1\yy}(x)$ uniformly in $[0,ct]$ for any given $c\in[0,\ud c_1)$. Together with \eqref{3.1}, we complete the proof of the assertion (i) in (2).

If $J_1$ does not satisfies {\bf (J1)}, then it follows from Proposition \ref{p2.4} that $\lim_{t\to\yy}{\ud s_1(t)}/{t}=\yy$ and $\lim_{t\to\yy}\max_{x\in[0,\,ct]}|\ud u(t,x)-\theta_{1\yy}(x)|=0$ for any $c\ge0$. Hence
  \bess
\lim_{t\to\yy}\frac{ s_1(t)}{t}=\yy,\;  {\rm ~ and ~ }\liminf_{t\to\yy}u(t,x)\ge\theta_{1\yy}(x) ~ ~ {\rm uniformly ~ in ~ }[0,ct],\;\;\forall\;c\ge 0.
  \eess
On the other hand, by Proposition \ref{p2.4} again, $\lim_{t\to\yy}\max_{x\in[0,ct]}|\bar{u}(t,x)-\theta_{1\ep}(x)|=0$ for all $c\ge0$.
Then arguing as above, we can derive
 \[\limsup_{t\to\yy}u(t,x)\le \theta_{1\yy}(x) ~ ~ {\rm uniformly ~ in ~ }[0,ct],  ~ ~ \forall c\ge0,\]
 which, combined with the lower limit of $u$, completes the proof of (ii) in (2).

Conclusion (3) is parallel to (2), and we omit the proof.

(4) Let $(u_l,v_l)$ be defined in the proof of Theorem \ref{p2.6}. For the given $L>0$, as $\lim_{l\to\yy}(u_l,v_l)=(\tilde u,\tilde v)$ locally uniformly in $[0,\yy)$, there exists a large $l>L$ such that $(u_l,v_l)\ge(\tilde u-\ep,\tilde v-\ep)$ for $x\in[0,L]$, and $\lm_i^p(l)>0$, $i=1,2$. For the fixed $l$, due to $s_i(\yy)=\yy$, one can find a large $T_l>0$ such that $s_i(T_l)>l$, $i=1,2$. Then $(u,v)$ satisfies
 \bess
 \begin{cases}
 \dd u_t\ge d_1\int_{0}^{l}J_1(x-y)u(t,y)\dy-d_1u+f_1(u,v), &~ t>T_l, ~ x\in[0,l],\\[3mm]
 \dd v_t\ge d_2\int_{0}^{l}J_2(x-y)v(t,y)\dy-d_2v+f_2(u,v), & t>T_l, ~ x\in[0,l],\\
 u(T_l,x)>0, ~ v(T_l,x)>0, & x\in[0,l].
 \end{cases}
 \eess
 Let $(\ud u,\ud v)$ be the solution of
 \bess
 \begin{cases}
 \dd \ud u_t= d_1\int_{0}^{l}J_1(x-y)\ud u(t,y)\dy-d_1\ud u+f_1(\ud u,\ud v), & t>T_l, ~ x\in[0,l],\\[3mm]
 \dd \ud v_t=d_2\int_{0}^{l}J_2(x-y)\ud v(t,y)\dy-d_2\ud v+f_2(\ud u,\ud v), &t>T_l, ~ x\in[0,l],\\
 \ud u(T_l,x)=u(T_l,x), ~ \ud v(T_l,x)=v(T_l,x), & x\in[0,l].
 \end{cases}
 \eess
Owing to $\lm_i^p(l)>0$, $i=1,2$, by Proposition \ref{p2.5}, $\lim_{t\to\yy}(\ud u,\ud v)=(u_{l},v_{l})$ uniformly in $[0,l]$. There exists $T>T_l$ such that $(\ud u,\ud v)\ge(u_{l}-\ep,v_{l}-\ep)$  for $t\ge T$ and $x\in[0,l]$. Moreover, a comparison argument gives $(u, v)\ge(\ud u,\ud v)$ for $t\ge T$ and $x\in[0,l]$. Hence $(u,v)\ge(\tilde u-2\ep,\tilde v-2\ep)$ for $t\ge T$ and $x\in[0,L]$, which implies
  \bes
  \liminf_{t\to\yy}(u,v)\ge(\tilde u,\tilde v)\;\;\;\mbox{locally\, uniformly\, in }\; [0,\yy).
  \lbl{x3.4}\ees

On the other hand, let $(\bar{u},\bar{v})$ be the unique solution of
 \bess
 \begin{cases}
 \dd \bar u_t= d_1\int_{0}^{\yy}J_1(x-y)\bar u(t,y)\dy-d_1\bar u+f_1(\bar u,\bar v), ~ ~ t>0, ~ x\in[0,\yy),\\[3mm]
 \dd \bar v_t=d_2\int_{0}^{\yy}J_2(x-y)\bar v(t,y)\dy-d_2\bar v+f_2(\bar v,\bar v), ~ ~ t>0, ~ x\in[0,\yy),\\
 \bar u(0,x)=\|u_0\|_{\yy}+u^*, ~ \bar v(0,x)=\|v_0\|_{\yy}+v^*, ~ ~ x\in[0,\yy).
 \end{cases}
 \eess
Then $(u,v)\le(\bar{u},\bar{v})$ and $(\bar{u},\bar{v})$ is nonincreasing in $t\ge0$ by the comparison principle. It is easy to show that $\lim_{t\to\yy}(\bar{u},\bar{v})$ exists and is a bounded nonnegative solution of \eqref{2.11}. Besides, $(\bar{u},\bar{v})\ge(\ud u,\ud v)$, which implies that $\liminf_{t\to\yy}(\bar{u},\bar{v})\ge(\tilde u,\tilde v)$ locally uniformly in $[0,\yy)$. Thus, $\limsup_{t\to\yy}(u,v)\le\lim_{t\to\yy}(\bar{u},\bar{v})=(\tilde u,\tilde v)$ locally uniformly in $[0,\yy)$ by the uniqueness of bounded positive solution of \eqref{2.11}. This, combined with \qq{x3.4}, yields that $\lim_{t\to\yy}(u,v)=(\tilde u,\tilde v)$ locally uniformly in $[0,\yy)$.

Next we prove the estimates for spreading speeds of $s_i$ with $i=1,2$. We only handle $s_1$. Based on the proof of (2), by comparing $(u,s_1)$ and $(\ud u,\ud s_1)$, we have that if $J_1$ satisfies {\bf (J1)} then
$\liminf_{t\to\yy}{s_1(t)}/{t}\ge \ud c_1$, and if $J_1$ violates {\bf (J1)} then  $\lim_{t\to\yy}s_1(t)/t=\yy$. Let $(\bar{u},\bar{s}_1)$ be the unique solution of \eqref{2.4} with $(d,P,\alpha,\beta,\mu, h_0,w_0)=(d_1,J_1,r_1a,r_1,\mu_1,s_1(0),u_0)$. Then $(u(t,x),s_1(t))\le (\bar{u}(t,x),\bar{s}_1(t))$ for $t\ge0$ and $x\in[0,s_1(t)]$. By $s_1(\yy)=\yy$, we have $\bar{s}_1(\yy)=\yy$. It follows by Proposition \ref{p2.4} that
 \[\limsup_{t\to\yy}\frac{s_1(t)}{t}\le \lim_{t\to\yy}\frac{\bar{s}_1(t)}{t}=\bar c_1.\]
The proof is complete.
\end{proof}

Next we give some criteria determining when spreading or vanishing happens.

\begin{theorem}\label{t3.2} Let $(u,v,s_1,s_2)$ be the unique solution of \eqref{1.1}. Then the following statements are valid.
\begin{enumerate}[$(1)$]
  \item If $r_1a\ge d_1$ $(r_2\ge d_2)$, then spreading always happens for $(u,s_1)$ $((v,s_2))$ no matter what other parameters are.
  \item If $r_1a<d_1$ $(r_2<d_2)$, then there exists a unique $\ell_1>0$ $(\ell_2>0)$ such that spreading happens for $(u,s_1)$ $((v,s_2))$ when  $s_{10}\ge \ell_1$ $(s_{20}\ge\ell_2)$.
  \item If $r_1a<d_1$ and $s_{10}<\ell_1$ $(r_2<d_2 {\rm ~ and ~  } s_{20}<\ell_2)$, there exists a unique $\mu^*_1>0$ $(\mu^*_2>0)$ such that spreading happens for $(u,s_1)$ $((v,s_2))$ if and only if $\mu_1>\mu^*_1$ $(\mu_2>\mu^*_2)$.
\end{enumerate}
\end{theorem}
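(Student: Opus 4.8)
The plan is to reduce every assertion about $(u,s_1)$ (resp. $(v,s_2)$) to the scalar nonlocal free boundary problem \eqref{2.4} by comparison, using the elementary bounds
\[
r_1u(a-2u)\le f_1(u,v)\le r_1u(a-u),\qquad r_2v(1-2v)\le f_2(u,v)\le r_2v(1-v),
\]
valid for all $u,v\ge0$ because $0\le\frac{u}{1+bv}\le u$ and $0\le\frac{v}{1+qu}\le v$. I treat only $(u,s_1)$, the case of $(v,s_2)$ being identical after replacing $(d_1,J_1,r_1a,\mu_1,s_{10},u_0,b)$ by $(d_2,J_2,r_2,\mu_2,s_{20},v_0,q)$. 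The lower bound makes $(u,s_1)$ a supersolution of \eqref{2.4} with $(d,P,\alpha,\beta,\mu,h_0,w_0)=(d_1,J_1,r_1a,2r_1,\mu_1,s_{10},u_0)$, hence $s_1(t)\ge\ud h(t)$ for the solution $(\ud w,\ud h)$ of that problem; the upper bound makes $(u,s_1)$ a subsolution of the same problem with $\beta=2r_1$ replaced by $\beta=r_1$, hence $s_1(t)\le\ol h(t)$. The decisive point is that these two comparison problems share the \emph{same} associated principal eigenvalue $\lm_1^p(s_{10})$, since the eigenvalue problem \eqref{2.1} does not involve $\beta$.

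For $(1)$: if $r_1a\ge d_1$, then the limits \eqref{x2.2} together with the strict monotonicity of $l\mapsto\lm_1^p(l)$ give $\lm_1^p(s_{10})>r_1a-d_1\ge0$ for every $s_{10}>0$; by the final clause of Proposition \ref{p2.4} the lower comparison problem spreads, so $s_1(\infty)\ge\ud h(\infty)=\infty$ whatever the remaining parameters are. For $(2)$: if $r_1a<d_1$, then $\lm_1^p(l)\to r_1a-d_1<0$ as $l\to0$ and $\lm_1^p(l)\to r_1a>0$ as $l\to\infty$, so by continuity and strict monotonicity there is a unique $\ell_1>0$ with $\lm_1^p(\ell_1)=0$; then $s_{10}\ge\ell_1$ forces $\lm_1^p(s_{10})\ge0$ and the same comparison gives spreading of $(u,s_1)$.

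For $(3)$, suppose $r_1a<d_1$ and $s_{10}<\ell_1$, so $\lm_1^p(s_{10})<0$. Step one: by the scalar theory in \cite{LLW22} there are finite positive numbers $\mu^{**}_{r_1}$ and $\mu^{**}_{2r_1}$ such that problem \eqref{2.4} with $\alpha=r_1a$, initial datum $(s_{10},u_0)$ and sub-critical eigenvalue $\lm_1^p(s_{10})<0$ spreads for $\mu>\mu^{**}_{\beta}$ and vanishes for $\mu<\mu^{**}_{\beta}$, where $\beta\in\{r_1,2r_1\}$. The lower comparison then yields spreading of $(u,s_1)$ for $\mu_1>\mu^{**}_{2r_1}$, and the upper comparison yields vanishing for $\mu_1<\mu^{**}_{r_1}$; hence the set $\Sigma:=\{\mu_1>0:\ (u,s_1)\text{ spreads}\}$ obeys $(\mu^{**}_{2r_1},\infty)\subset\Sigma\subset[\mu^{**}_{r_1},\infty)$, so $\mu^*_1:=\inf\Sigma$ lies in $[\mu^{**}_{r_1},\mu^{**}_{2r_1}]\subset(0,\infty)$. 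Step two: $\Sigma$ is an up-set. Indeed, for $\mu_1<\tilde\mu_1$, comparing the solutions of \eqref{1.1} with these two values of the first expansion rate and identical remaining data, the $\tilde\mu_1$-solution is a vector supersolution of the $\mu_1$-problem — the only altered relation is the expansion law for $s_1$, whose nonnegative right-hand side is multiplied by the larger constant $\tilde\mu_1$, and the system is cooperative — so $s_1(t)\le\tilde s_1(t)$ for all $t$, whence $\mu_1\in\Sigma\Rightarrow\tilde\mu_1\in\Sigma$. Therefore spreading holds for all $\mu_1>\mu^*_1$ and vanishing for all $\mu_1<\mu^*_1$. Step three: $\mu^*_1$ itself is a vanishing value. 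If instead $s_1(\infty;\mu^*_1)=\infty$, pick $T$ with $s_1(T;\mu^*_1)>\ell_1$; by continuous dependence of the solution of \eqref{1.1} on $\mu_1$ over $[0,T]$ there is $\mu_1<\mu^*_1$ with $s_1(T;\mu_1)>\ell_1$, and then applying the lower comparison on $[T,\infty)$ with initial domain length $s_1(T)>\ell_1$ (so $\lm_1^p(s_1(T))>0$) together with Proposition \ref{p2.4} makes $(u,s_1)$ spread for this $\mu_1<\mu^*_1$, contradicting $\mu^*_1=\inf\Sigma$. Hence $\Sigma=(\mu^*_1,\infty)$, which is exactly $(3)$, and $\mu^*_1$ is unique as the endpoint of $\Sigma$.

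The main obstacle is $(3)$: a two-sided scalar sandwich does not by itself produce a \emph{sharp} threshold, because the upper and lower comparison problems carry genuinely different critical rates $\mu^{**}_{r_1}<\mu^{**}_{2r_1}$ (the scalar critical rate scales linearly in $\beta$), leaving the window $(\mu^{**}_{r_1},\mu^{**}_{2r_1})$ undecided by comparison alone. Sharpness has to be extracted from the intrinsic monotonicity of $s_1$ in $\mu_1$ for the full cooperative system, plus the openness argument at the endpoint; the vector comparison principle underpinning that monotonicity (already available from the well-posedness theory of \eqref{1.1}) is the one genuinely system-specific ingredient, whereas $(1)$–$(2)$ are routine once the $\beta$-independence of $\lm_1^p$ and the limits \eqref{x2.2} are in hand.
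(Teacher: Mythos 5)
Your proof is correct and mirrors the paper's argument almost exactly: sandwich $(u,s_1)$ between the scalar free boundary problems \eqref{2.4} with $\beta=2r_1$ and $\beta=r_1$, which share the same principal eigenvalue threshold $\lm_1^p$, then settle $(1)$--$(2)$ by that threshold and derive $(3)$ from the two scalar vanishing/spreading windows combined with monotonicity of $(u,s_1)$ in $\mu_1$ for the cooperative system. The only presentational difference is that the paper invokes \cite[Theorem 3.9]{DL2010} for the sharp-threshold step, whereas you spell out that argument explicitly via continuous dependence of $s_1(T;\mu_1)$ on $\mu_1$ at $\mu_1=\mu^*_1$.
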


\begin{proof}We only prove the conclusions about $(u,s_1)$ since those of $(v,s_2)$ can be obtained by following similar lines.

(1) If $r_1a\ge d_1$, then $\lambda_1^p(l)>0$ for all $l>0$ by \qq{x2.2}, so spreading happens by Theorem \ref{t3.1}(1).

(2) If $r_1a<d_1$, by \qq{x2.2} again, there is a unique $\ell_1>0$ such that $\lambda_1^p(\ell_1)=0$ and $\lambda_1^p(l)(l-\ell_1)>0$ for $l\neq\ell_1$.  Due to  Theorem \ref{t3.1}(1), spreading happens when $s_{10}\ge\ell_1$.

(3) Owing to $r_1a<d_1$ and $s_{10}<\ell_1$, it follows from \cite[Theorem 3.3]{LLW22} that there is a small $\ud \mu_1>0$ and a large $\ol \mu_1>0$ such that vanishing happens for \eqref{2.4} with $(d,P,\alpha,\beta,\mu, h_0,w_0)=(d_1,J_1,r_1a,r_1,\mu_1,s_1(0),u_0)$ if $\mu_1<\ud \mu_1$, and spreading happens for \eqref{2.4} with $(d,P,\alpha,\beta,\mu, h_0,w_0)=(d_1,J_1,r_1a,2r_1,\mu_1,s_1(0),u_0)$ if $\mu_1>\ol \mu_1$. Moreover, it is easy to see that $(u,s_1)$ is a lower solution of \eqref{2.4} with $(d,P,\alpha,\beta,\mu, h_0,w_0)=(d_1,J_1,r_1a,r_1,\mu_1,s_1(0),u_0)$, and meanwhile is an upper solution of \eqref{2.4} with $(d,P,\alpha,\beta,\mu, h_0,w_0)=(d_1,J_1,r_1a,2r_1,\mu_1,s_1(0),u_0)$. Thus, if $\mu_1<\ud \mu_1$, then vanishing happens for $(u,s_1)$, while spreading occurs for $(u,s_1)$ if $\mu_1>\ol \mu_1$.

On the other hand, since \eqref{1.1} is a cooperative system, as in \cite{LL,CT} we can show $(u,s_1)$ is nondecreasing in $\mu_1$ by using a comparison principle for \eqref{1.1}. Then we can argue as in the proof of \cite[Theorem 3.9]{DL2010} to derive (3). The proof is complete.
\end{proof}

\section{More accurate estimates of solution component $(u,v)$ for spreading case}

From Theorem \ref{t3.1}(4) we can only assert that, when spreading happens,
\[\lim_{t\to\yy}(u(t,x),v(t,x))=(\tilde u(x),\tilde v(x))\;~ ~ {\rm  in  ~ }\; [C_{{\rm loc}}([0,\yy))]^2.\]
In this section, we will improve the above convergence result by using a general technical lemma and modifying the usual iteration method.

First, we consider a steady state problem on half space that will be used later. Let $k\in C([0,\yy))$ be nonincreasing, $k_{\yy}:=\lim_{x\to\yy}k(x)>0$, and $d, \alpha>0$ be constants. Consider the problem
\bes\label{4.1}
d\int_{0}^{\yy}P(x-y)U(y)\dy-dU+U(\alpha-k(x)U)=0, ~ ~ x\in[0,\yy).
\ees

\begin{proposition}\label{p4.1}Let $P$ satisfy {\bf (J)}. Then the following statements are valid.
\begin{enumerate}[$(1)$]
  \item \eqref{4.1} has a unique bounded positive solution $U_k\in C([0,\yy))$, and $U_k$ is nondecreasing for $x\ge0$ and $\lim_{x\to\yy}U_k(x)=\alpha/k_{\yy}$.
  \item $U_k$ is nonincreasing for $k$, i.e., $U_{k_1}(x)\ge U_{k_2}(x)$ when $k_1(x)\le k_2(x)$.
  \item Let $k_n\in C([0,\yy))$ be nonincreasing and $\lim_{x\to\yy}k_n(x)>0$. Denote by $U_{k_n}$ the unique bounded positive solution of \eqref{4.1} with $k(x)$ replaced by $k_n(x)$. Then if $k_n$ is nonincreasing for $n\ge1$, i.e., $k_n(x)\ge k_{n+1}(x)$ for $x\ge0$, and $k_n(x)\to k(x)$ in $L^{\yy}([0,\yy))$ as $n\to\yy$, then $U_{k_n}(x)\to U_{k}(x)$ in $L^{\yy}([0,\yy))$ as $n\to\yy$.
\end{enumerate}
 \end{proposition}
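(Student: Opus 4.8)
The plan is to follow closely the pattern already established for the autonomous case in Lemmas \ref{l2.1}, \ref{l2.2} and Theorem \ref{p2.6}, exploiting the monotonicity of $k$ and the two-sided bound $k_\yy\le k(x)\le k(0)$.

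\emph{Part (1).} For existence I would truncate: by \qq{x2.2} we have $\lambda(d,P,\alpha,(0,l))>0$ for $l\gg1$, so problem \qq{4.1} on $[0,l]$ has a unique positive solution $U_{k,l}\in C([0,l])$ with $0<U_{k,l}\le\alpha/k_\yy$ — a subsolution is $\ep$ times the principal eigenfunction and a supersolution is the constant $\alpha/k_\yy$ (using $\int_0^\yy P\le1$ and $k(x)\ge k_\yy$), and uniqueness follows from Lemma \ref{le2.7} and the scaling argument of Lemma \ref{le2.8}. Since enlarging $l$ only enlarges the integral term, $U_{k,l}$ is nondecreasing in $l$, so $U_k:=\lim_{l\to\yy}U_{k,l}$ is well defined, satisfies $0<U_k\le\alpha/k_\yy$ and, by dominated convergence, solves \qq{4.1}. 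Continuity of $U_k$ is immediate from solving the quadratic \qq{4.1} for its positive root, $U_k(x)=\bigl[(\alpha-d)+\sqrt{(\alpha-d)^2+4k(x)d\int_0^\yy P(x-y)U_k(y)\dy}\,\bigr]/(2k(x))$, whose right-hand side is continuous because $k\ge k_\yy>0$ and $x\mapsto\int_0^\yy P(x-y)U_k(y)\dy$ is continuous. Monotonicity: for $\delta>0$ the shift $U_k(\cdot+\delta)$ solves \qq{4.1} with the integral taken over $[-\delta,\yy)$ and $k$ replaced by $k(\cdot+\delta)\le k(\cdot)$; dropping the extra mass of $P$ on $[-\delta,0]$ and using monotonicity of $k$ shows $U_k(\cdot+\delta)$ is a supersolution of \qq{4.1}, hence $U_k(\cdot+\delta)\ge U_k$ by the comparison argument of Theorem \ref{p2.6}, so $U_k$ is nondecreasing; being bounded above by $\alpha/k_\yy$ it has a limit $U_\yy$ at $\yy$, and translating $U_k$ by $x_n\to\yy$ — so $k(\cdot+x_n)\to k_\yy$ locally uniformly (Dini) and the integral becomes one over $\R$ — dominated convergence forces the constant $U_\yy$ to satisfy $U_\yy(\alpha-k_\yy U_\yy)=0$, i.e. $U_\yy=\alpha/k_\yy$.

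\emph{Uniqueness, and Part (2).} Given any bounded positive solution $U$ of \qq{4.1}, restricting to $[0,l]$ shows $U\ge U_{k,l}$ there, so $U\ge U_k$; a supremum argument (test the equation at a point where the supremum is attained, or along a sequence $x_n\to\yy$ where $k(x_n)\to k_\yy$) gives $U\le\alpha/k_\yy$, hence $U(x)\to\alpha/k_\yy$ as $x\to\yy$. Then the usual scaling argument applies: with $k^\ast:=\inf\{\kappa\ge1:\kappa U_k\ge U\text{ on }[0,\yy)\}$ (finite since $U_k\ge U_k(0)>0$ and $U$ is bounded), if $k^\ast>1$ then $k^\ast U_k$ solves \qq{4.1} with $k$ replaced by $k/k^\ast<k$, hence is a strict supersolution; by minimality of $k^\ast$ the infimum of $k^\ast U_k-U$ is $0$, and since $k^\ast U_k-U\to(k^\ast-1)\alpha/k_\yy>0$ at $\yy$ this infimum is attained at some $x_0\in[0,\yy)$, so comparing the two identities at $x_0$ yields $0>0$; thus $k^\ast=1$, $U_k\ge U$, and $U\equiv U_k$. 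Part (2) is the same comparison: if $k_1\le k_2$ then $U_{k_1}(\alpha-k_2U_{k_1})\le U_{k_1}(\alpha-k_1U_{k_1})$, so $U_{k_1}$ is a supersolution of \qq{4.1} with $k=k_2$, and the scaling argument (now well-posed because $(k_1)_\yy\le(k_2)_\yy$, so $U_{k_1}-U_{k_2}$ has a nonnegative limit at $\yy$) gives $U_{k_1}\ge U_{k_2}$.

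\emph{Part (3).} Since $k_n\ge k_{n+1}$ and $k_n\to k$ in $L^\yy$ force $k_n\ge k$, Part (2) gives $U_{k_1}\le U_{k_n}\le U_{k_{n+1}}\le U_k$, so $\hat U(x):=\lim_{n\to\yy}U_{k_n}(x)$ exists, is positive and bounded, and by dominated convergence solves \qq{4.1}; by the uniqueness in Part (1), $\hat U\equiv U_k$, so $U_{k_n}\nearrow U_k$ pointwise, and Dini's theorem upgrades this to uniform convergence on every $[0,X]$. For the tail, put $W_n:=U_k-U_{k_n}\ge0$; subtracting the equations gives
\[
d\int_0^\yy P(x-y)W_n(y)\dy-dW_n(x)+p_n(x)W_n(x)=\bigl(k(x)-k_n(x)\bigr)U_{k_n}(x)^2,
\]
with $p_n(x)=\alpha-k(x)\bigl(U_k(x)+U_{k_n}(x)\bigr)$ and right-hand side $\le0$ of absolute value at most $\ep_n:=\|k-k_n\|_{L^\yy}(\alpha/k_\yy)^2\to0$. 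Because $U_{k_n}\ge U_{k_1}\to\alpha/(k_1)_\yy$, $U_k\to\alpha/k_\yy$ and $k\ge k_\yy$, there exist $X$ and $c_0>0$, independent of $n$, with $p_n(x)\le-c_0$ for $x\ge X$; then for $x\ge X$,
\begin{align*}
(d+c_0)W_n(x)&\le\bigl(d-p_n(x)\bigr)W_n(x)=d\int_0^\yy P(x-y)W_n(y)\dy+\bigl(k_n(x)-k(x)\bigr)U_{k_n}(x)^2\\
&\le d(A_n+B_n)+\ep_n,
\end{align*}
where $A_n:=\sup_{[0,X]}W_n$ and $B_n:=\sup_{x\ge X}W_n$; taking the supremum over $x\ge X$ gives $c_0B_n\le dA_n+\ep_n$, and since $A_n\to0$ we conclude $B_n\to0$, i.e. $\|W_n\|_{L^\yy([0,\yy))}=\max\{A_n,B_n\}\to0$.

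\emph{Main obstacle.} The comparison and monotone-iteration steps are routine variants of arguments already in the paper; the delicate points are the identification $\lim_{x\to\yy}U(x)=\alpha/k_\yy$ for an \emph{arbitrary} bounded positive solution (needed for uniqueness) and, above all, the passage from locally uniform to $L^\yy$ convergence in Part (3). Both hinge on the same quantitative fact: the linearized zeroth-order coefficient $p_n(x)=\alpha-k(x)(U_k(x)+U_{k_n}(x))$ is bounded above by a fixed negative constant for all large $x$, uniformly in $n$, which is precisely what the monotonicity in $x$ and in $k$ established in Parts (1)–(2) delivers.
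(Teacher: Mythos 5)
Your proposal is correct, and it reaches the same conclusions, but it deviates from the paper's proof in three identifiable places, all of them sound.

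\textbf{Existence.} The paper builds $U_k$ by monotone iteration on the half-line: it takes as a lower solution the solution $\underline U$ of \eqref{4.1} with $k(x)$ replaced by the constant $k(0)$ (which exists by \cite[Lemma~2.7]{LLW22}), takes $\bar U=\alpha/k_\yy$ as an upper solution, and iterates the operator $\Gamma(u)=\omega^{-1}\bigl(d\int P u-du+u(\alpha-k(x)u)+\omega u\bigr)$ with $\omega=d+\alpha+1$. You instead truncate to $[0,l]$, solve there (using $\ep\phi$ and $\alpha/k_\yy$ as sub/super-solutions once $\lambda(d,P,\alpha,(0,l))>0$), note monotonicity in $l$, and pass to the limit. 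Both are standard; your route requires establishing uniqueness on bounded intervals up front, which you correctly note follows from the scaling argument of Lemma~\ref{le2.8}.

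\textbf{Limit at $\infty$.} The paper proves $\liminf_{x\to\yy}U(x)\ge\alpha/k_\yy$ via Fatou and $\limsup_{x\to\yy}U(x)\le\alpha/k_\yy$ via dominated convergence, applied to arbitrary minimizing/maximizing sequences. You first obtain monotonicity of $U_k$, so the limit $U_\yy$ exists automatically, and then identify $U_\yy=\alpha/k_\yy$ by translating the equation by $x_n\to\yy$ and applying dominated convergence once. This is a minor streamlining that relies on monotonicity already being in hand; the paper uses the Fatou/DCT argument because it also needs $\lim_{x\to\yy}U(x)=\alpha/k_\yy$ for an \emph{arbitrary} bounded positive solution $U$ (for uniqueness), where monotonicity is not yet known. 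You handle that case separately by sandwiching $U$ between $U_k$ (from truncation) and $\alpha/k_\yy$ (from a sup-test), which is fine.

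\textbf{Part (3), upgrading local uniform to $L^\yy$ convergence.} This is where you genuinely depart. The paper exploits the chain of inequalities $U_{k_n}\ge U_{k_{N_1}}$ for $n\ge N_1$ together with $U_{k_{N_1}}(\yy)=\alpha/k_{N_1,\yy}$ being close to $\alpha/k_\yy$: one fixes $N_1$ large, chooses $X$ so that $U_{k_{N_1}}(x)\ge\alpha/k_\yy-\ep$ for $x\ge X$, and then $U_{k_n}\ge U_{k_{N_1}}\ge U_k-\ep$ on $[X,\yy)$ for all $n\ge N_1$ with an $X$ independent of $n$. You instead subtract the two equations satisfied by $U_k$ and $U_{k_n}$ to get a linear relation for $W_n=U_k-U_{k_n}$ with zeroth-order coefficient $p_n(x)=\alpha-k(x)\bigl(U_k(x)+U_{k_n}(x)\bigr)$, observe that $p_n\le-c_0<0$ on $[X,\yy)$ uniformly in $n$ (because $U_{k_n}\ge U_{k_1}$ and $U_k,U_{k_1}$ have positive limits), and close the estimate with a sup argument. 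This linearization route is arguably more robust (it does not rely on comparing against a fixed $U_{k_{N_1}}$), at the cost of being slightly longer; it gives the same conclusion. The algebra is correct, and the key uniform negativity of $p_n$ is properly justified by Parts (1)--(2), as you observe.

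The rest — the scaling argument for uniqueness and the comparison for Part (2) — matches the paper's method.
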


 \begin{proof}
(1) {\it Step 1: The existence}. We shall show the existence of $U_k$ by a monotone iteration method. In view of \cite[Lemma 2.7]{LLW22}, problem \eqref{4.1} with $k(x)$ replaced by $k(0)$ has a unique bounded positive solution $\ud U\in C([0,\yy))$ and $0<\ud U<\alpha/k(0)$. Let $\bar{U}=\alpha/k_{\yy}$ and $\omega=d+\alpha+1$. Define an operator $\Gamma$ by
 \[\Gamma(u)=\frac{1}{\omega}\left(d\int_{0}^{\yy}P(x-y)u(y)\dy-du+u(\alpha-k(x)u)+\omega u\right), ~ ~ u\in C([0,\yy))\cap L^{\yy}([0,\yy)).\]
It is easy to verify that $\Gamma(u)$ is nondecreasing for $u\in C([0,\yy))$ and $0\le u\le \bar{U}$. Moreover, one readily shows that $\Gamma(\ud U)\ge\ud U$ and $\Gamma(\bar{U})\le \bar{U}$. Then by an iteration argument, problem \eqref{4.1} has a positive solution $U$ with $\ud U\le U\le \bar{U}$. Notice that $\int_{0}^{\yy}P(x-y)U(y)\dy$ and $k(x)$ are continuous for $x\ge0$. From a quadratic formula, it follows that $U$ is also continuous for $x\ge0$.

{\it Step 2: The uniqueness}. To prove the uniqueness, we first show $\lim_{x\to\yy}U(x)=\alpha/k_{\yy}$. Clearly, $U_{{\rm inf}}:=\liminf_{x\to\yy}U(x)>0$. Suppose that $U_{{\rm inf}}<\alpha/k_{\yy}$. Then there exists a sequence $\{x_n\}$ with $x_n\nearrow\yy$ such that $U(x_n)\to U_{{\rm inf}}$ as $n\to\yy$. By \eqref{4.1}, we have
 \bess
 d\int_{0}^{\yy}P(x_n-y)U(y)\dy=dU(x_n)-U(x_n)(\alpha-k(x_n)U(x_n))\to dU_{{\rm inf}}-U_{{\rm inf}}(\alpha-k_{\yy}U_{{\rm inf}})<dU_{{\rm inf}}.
 \eess
 However, by Fatou's Lemma, we see
 \bess
 \liminf_{n\to\yy}\!\!\int_{0}^{\yy}P(x_n-y)U(y)\dy&=&\liminf_{n\to\yy}\!\!\int_{-x_n}^{\yy}P(y)U(x_n+y)\dy\\
 &=&\liminf_{n\to\yy}\!\!\int_{-\yy}^{\yy}P(y)U(x_n+y)\chi_{[x_n,\yy)}(y)\dy\\
 &\ge&\int_{-\yy}^{\yy}\liminf_{n\to\yy}P(y)U(x_n+y)\chi_{[x_n,\yy)}(y)\dy\\
 &\ge&\int_{-\yy}^{\yy}P(y)U_{{\rm inf}}\dy=U_{{\rm inf}},
 \eess
where $\chi_{[x_n,\yy)}$ is the characteristic function of $[x_n,\yy)$. So we derive a contradiction, and further $\liminf_{x\to\yy}U(x)\ge\alpha/k_{\yy}$.

We will prove $U_{{\rm sup}}:=\limsup_{x\to\yy}U(x)\le\alpha/k_{\yy}$. Assume on the contrary that $U_{{\rm sup}}>\alpha/k_{\yy}$. Let $x_n\nearrow\yy$ be such that $U(x_n)\to U_{{\rm sup}}$. Then, due to \eqref{4.1}, we have
 \bess
 d\int_{0}^{\yy}P(x_n-y)U(y)\dy=dU(x_n)-U(x_n)(\alpha-k(x_n)U(x_n))\to dU_{{\rm sup}}-U_{{\rm sup}}(\alpha-k_{\yy}U_{{\rm sup}})>dU_{{\rm sup}}.
 \eess
 On the other hand, using the dominated convergence theorem yields
 \bess
\lim_{n\to\yy} \int_{0}^{\yy}P(x_n-y)U(y)\dy&=&\lim_{n\to\yy}\!\!\int_{-x_n}^{\yy}P(y)U(x_n+y)\dy\\
&=&\lim_{n\to\yy}\!\!\int_{-\yy}^{\yy}P(y)U(x_n+y)\chi_{[x_n,\yy)}(y)\dy\\
&\le&\lim_{n\to\yy}\!\!\int_{-\yy}^{\yy}P(y)\sup_{x\ge x_n+y}U(x)\chi_{[x_n,\yy)}(y)\dy\\
&=&\int_{-\yy}^{\yy}P(y)U_{{\rm sup}}\dy=U_{{\rm sup}}.
 \eess
We get a contradiction. Thus $\lim_{x\to\yy}U(x)=\alpha/k_{\yy}$.

Now we show the uniqueness. Let $V$ be another bounded positive solution of \eqref{4.1}. As above, it can be deduced that $V\in C([0,\yy))$ and $\lim_{x\to\yy}V(x)=\alpha/k_{\yy}$. Then we can define
\[\ud\eta=\inf\{\eta>1: \eta U(x)\ge V(x), ~ ~ x\in[0,\yy)\}.\]
Clearly, $\ud\eta$ is well defined, and $\ud\eta U(x)\ge V(x)$ for $x\ge0$. We next show $\ud\eta=1$. Otherwise, if $\ud\eta>1$, then there must exists some $x_0\ge0$ such that $\ud\eta U(x_0)=V(x_0)$. By virtue of \eqref{4.1}, we have
\[d\int_{0}^{\yy}P(x_0-y)\ud\eta U(y)\dy-\ud\eta k(x_0)U^2(x_0)
=d\int_{0}^{\yy}P(x_0-y)V(y)\dy-k(x_0)V^2(x_0),\]
i.e.,
 \bess
 0\le d\int_{0}^{\yy}P(x_0-y)[\ud\eta U(y)-V(y)]\dy <k(x_0)[\ud\eta^2U^2(x_0)-V^2(x_0)]=0.\eess
This contradiction shows $\ud\eta=1$, and $U(x)\ge V(x)$ for $x\ge0$. Similarly, $V(x)\ge U(x)$ for $x\ge0$. The uniqueness is obtained.

{\it Step 3: The monotonicity}. We first prove that the unique bounded positive solution $U_K(x)$ of \eqref{4.1} is nondecreasing for $x\ge0$. For any $\sigma>0$, set $U^{\sigma}_k(x)=U_k(x+\sigma)$. Then thanks to \eqref{4.1}, we see
\[d\int_{-\sigma}^{\yy}P(x-y)U^{\sigma}_k(y)\dy-dU^{\sigma}_k(x)
+U^{\sigma}_k(x)(\alpha-k(x+\sigma)U^{\sigma}_k(x))=0, ~ ~ x\in[0,\yy).\]
Since $U$ is positive and $k(x)$ is nonincreasing, we have
\[d\int_{0}^{\yy}P(x-y)U^{\sigma}_k(y)\dy-dU^{\sigma}_k(x)
+U^{\sigma}_k(x)(\alpha-k(x)U^{\sigma}_k(x))\le0, ~ ~ x\in[0,\yy).\]
Then we can argue as in the Step 2 to show $U^{\sigma}_k(x)\ge U(x)$ for $x\ge0$. Due to the arbitrariness of $\sigma$, we derive that $U(x)$ is nondecreasing for $x\ge0$.

(2) Now we show $U_k$ is nonincreasing for $k$, i.e., $U_{k_1}(x)\ge U_{k_2}(x)$ for any $k_1(x)\le k_2(x)$.  Owing to \eqref{4.1}, it is easy to see that
   \[d\int_{0}^{\yy}P(x-y)U_{k_1}(y)\dy-dU_{k_1}+U_{k_1}(\alpha-k_2(x)U_{k_1})\le0, ~ ~ x\in[0,\yy).\]
Since $U_{k_1}(x)\to\alpha/k_1(\yy)$ and $U_{k_2}(x)\to\alpha/k_2(\yy)$ as $x\to\yy$ and $\alpha/k_1(\yy)\ge\alpha/k_2(\yy)$, where $k_i(\yy):=\lim_{x\to\yy}k_i(x)$, we can use the similar method as in the Step 2 to prove $U_{k_1}(x)\ge U_{k_2}(x)$ for $x\ge0$. The details are omitted here.

(3) From the properties of $k_n$ and statement (2) for $U_{k_n}$, it follows that $U_{k_n}\le U_{k_{n+1}}\le U_k$ for $n\ge1$. Therefore  $U(x):=\lim_{n\to\yy}U_{k_n}(x)$ is well-defined and positive for $x\ge0$. In view of the dominated convergence theorem, $U$ satisfies \eqref{4.1}. Then by statement (1), $U\equiv U_{k}$ in $[0,\yy)$. Using Dini's theorem, we get $U_{k_n}\to U_{k}$ locally uniformly in $[0,\yy)$ as $n\to\yy$.

Notice that $U_{k_n}\le U_{k}$. Hence it remains to show that for any small $\ep>0$, there exists a large $N$ such that for $n\ge N$, $U_{k}(x)-\ep\le U_{k_n}(x)$ for $x\ge0$. Thanks to $\lim_{n\to\yy}k_n=k$ in $L^{\yy}([0,\yy))$, we have $\lim_{n\to\yy}k_{n\yy}=k_{\yy}$, where $k_{n\yy}:=\lim_{x\to\yy}k_n(x)$. So for small $\ep>0$, there exists a large $N_1$ such that $k_{n\yy}\ge k_{\yy}-\ep/2$ for $n\ge N_1$. This, together with the monotonicity of $U_{k_n}$ on $n$, yields that $U_{k_n}(x)\ge U_{k}(x)-\ep$ for $n\ge N_1$ and $x\ge X>0$, where $X$ does not depend on $n$. Recalling $\lim_{n\to\yy}U_{k_n}(x)=U_{k}(x)$ locally uniformly in $[0,\yy)$, we immediately get the desired result. The proof is complete.
 \end{proof}

\begin{remark}\label{r4.1}By arguing as in the proof of \cite[Lemma 2.7]{LLW22}, \cite[Proposition 3.5]{CDLL} and \cite[Theorem 2.1]{BL}, we can show that there exists a large $L_0$ depending only on $d$, $\alpha$ and $P(x)$ such that for all $l>L_0$, problem
\bes\label{4.2}
\dd d\int_{0}^{L}P(x-y)\ud u_l(t,y)\dy-d\ud u_l+\ud u_l(\alpha-k(x)\ud u_l)=0, ~ ~ x\in[0,l]\ees
has a unique positive solution $u_l\in C([0,l])$ and $\lim_{l\to\yy}u_l(x)=U_k(x)$ locally uniformly in $[0,\yy)$.
\end{remark}

Let $s\in C([0,\yy))$ be strictly increasing, $s(0)>0$ and $s_{{\rm inf}}:=\liminf_{t\to\yy}s(t)/t>0$. We will study the asymptotical behaviors of the following nonlocal diffusion problem defined in a domain with a given moving boundary
\bes\label{4.3}
\begin{cases}
  u_t=\dd d\int_{0}^{s(t)}P(x-y)u(t,y)\dy-du+u(\alpha-k(x)u), & t>0,~ x\in[0,s(t)), \\
  u(t,s(t))=0, & t>0, \\
  u(0,x)>0,~ x\in[0,s(0)); ~ ~ u(0,s(0))=0, ~ u(0,x)\in C([0,s(0)]),
\end{cases}
\ees
where $d$, $\alpha$ and $k(x)$ are given in Proposition \ref{p4.1}.

\begin{proposition}\label{p4.2}Let $u$ be the unique solution of \eqref{4.3}. Then the following statements are valid.\vspace{-2mm}
\begin{enumerate}[$(1)$]
  \item $\lim_{t\to\yy}u(t,x)=U_k(x)$ locally uniformly in $[0,\yy)$.\vspace{-2mm}
  \item  If $P$ satisfies {\bf (J2)}, then $\lim_{t\to\yy}u(t,x)=U_{k}(x)$ uniformly in $[0,ct]$ for any $0\le c<\min\{s_{{\rm inf}}, C_*\}$, where $C_*$ is the minimal speed of \eqref{2.3} with $\beta=k_{\yy}$.\vspace{-2mm}
  \item If $P$ does not satisfy {\bf (J2)}, then $\lim_{t\to\yy}u(t,x)= U_{k}(x)$ uniformly in $[0,ct]$ for any $0\le c< s_{\rm inf}$.
\end{enumerate}\vspace{-2mm}
\end{proposition}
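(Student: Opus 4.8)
The plan is to trap $u$ between a super- and a sub-solution and to read off the three convergence statements. For the \emph{upper bound, valid for all three parts simultaneously}, let $\bar u$ solve the nonlocal evolution problem on $[0,\yy)$ (the equation of \qq{4.3} with $s(t)$ replaced by $\yy$) with the constant initial datum $M:=\max\{\|u(0,\cdot)\|_{L^\yy},\,\alpha/k_\yy\}$. Since $k$ is nonincreasing, $\alpha-k(x)M\le\alpha-k_\yy M\le0$ and $d\int_0^\yy P(x-y)M\,\dy-dM\le0$, so $M$ is a stationary supersolution; hence $\bar u(t,\cdot)$ is nonincreasing in $t$ and stays $\ge U_k$. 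Its limit is a bounded positive solution of \qq{4.1}, so by Proposition~\ref{p4.1}(1) it equals $U_k$; a Dini argument on compact sets together with the behaviour of $\bar u(t,\cdot)$ and of $U_k$ as $x\to\yy$ (the former governed by the scalar ODE $y'=y(\alpha-k_\yy y)$) upgrades this to $\bar u(t,\cdot)\to U_k$ uniformly on $[0,\yy)$. A comparison gives $u\le\bar u$ on the domain of $u$, hence $\limsup_{t\to\yy}u(t,x)\le U_k(x)$ uniformly on $[0,\yy)$, which is the upper half of (1)--(3).

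\textbf{Lower bound for (1).} For $l>L_0$ (the constant of Remark~\ref{r4.1}, enlarged if necessary so that spreading occurs), use $s(t)\to\yy$ to fix $T_l$ with $s(t)>l$ for $t\ge T_l$; then on $[0,l]$ the function $u$ is a supersolution of the evolution problem on the fixed interval $[0,l]$, whose solution converges, uniformly on $[0,l]$, to the unique positive steady state $u_l$ of \qq{4.2}. Hence $\liminf_{t\to\yy}u(t,x)\ge u_l(x)$ uniformly on $[0,l]$, and letting $l\to\yy$ and invoking $u_l\to U_k$ locally uniformly (Remark~\ref{r4.1}) yields $\liminf_{t\to\yy}u(t,x)\ge U_k(x)$ locally uniformly, which together with the upper bound proves (1).

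\textbf{Lower bound for (2)--(3): the main point.} Fix $c$ below the asserted threshold and $\ep>0$, and choose $X$ so large that $k(X)<k_\yy+\ep$ and $|\alpha/k(X)-\alpha/k_\yy|<\ep$. After shifting by $X$, the function $u(t,X+\cdot)$ is (dropping $\int_{-X}^0\ge0$ and using $k(X+\cdot)\le k(X)$) a supersolution on $[0,s(t)-X]$ of the Fisher--KPP nonlocal problem with reaction $w(\alpha-k(X)w)$, vanishing at its moving boundary. I will compare it with the free boundary problem \qq{2.4} for $(d,P,\alpha,\beta,\mu)=(d,P,\alpha,k(X),\mu)$. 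By Proposition~\ref{p2.3}(2)--(3) and the fact that the minimal wave speed $C_*$ is independent of $\beta$, the semi-wave speed $c(d,P,\alpha,k(X),\mu)$ is continuous (and increasing) in $\mu$ and ranges over $(0,C_*)$ when $P$ satisfies {\bf (J2)}, resp. over $(0,\yy)$ when $P$ satisfies {\bf (J1)} but not {\bf (J2)}; choose $\mu$ so that this speed lies in $(c,\,s_{\rm inf})$ --- possible precisely because $c<\min\{s_{\rm inf},C_*\}$ in case (2) and $c<s_{\rm inf}$ in case (3). Fix a small-height initial datum $w_0$ supported on $[0,h_0]$ with $h_0$ large enough for spreading, and then pick the starting time $T$ so large that: $u(T,X+\cdot)\ge w_0$ on $[0,h_0]$ (using the locally uniform bound from (1)), and $\ud h(t)\le s(T+t)-X$ for all $t\ge0$ (using $\ud h(t)/t\to$(semi-wave speed)$<s_{\rm inf}$, the monotonicity of $s$, and $s(T+t)\ge(s_{\rm inf}-\eta)(T+t)$ for large $T+t$). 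A comparison then gives $\ud w(t,\cdot)\le u(T+t,X+\cdot)$ on $[0,\ud h(t)]$, and Proposition~\ref{p2.4} yields $\liminf_{t\to\yy}u(t,X+x)\ge\theta_{k(X)}(x)$ uniformly for $x\in[0,ct]$, where $\theta_{k(X)}(x)\uparrow\alpha/k(X)$. Combining this on the far region with the locally uniform bound from (1) on a fixed initial region $[0,X+X']$, and using $U_k\le\alpha/k_\yy$, $U_k(x)\to\alpha/k_\yy$, one gets $\liminf_{t\to\yy}u(t,x)\ge U_k(x)-C\ep$ uniformly on $[0,ct]$; letting $\ep\to0$ and recalling the upper bound finishes (2)--(3).

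\textbf{The remaining subcase and the main obstacle.} When $P$ satisfies neither {\bf (J1)} nor {\bf (J2)}, no semi-wave exists and the step above must be preceded by a truncation: replacing $P$ by the renormalised truncation $P_R=P\,\chi_{[-R,R]}/\!\int_{-R}^{R}\!P$, one checks that $u$ is still a supersolution of the problem with kernel $P_R$ and slightly perturbed constants, that $P_R$ has compact support (hence satisfies {\bf (J2)}) with minimal speed $C_*^R\to\yy$ as $R\to\yy$, and that the associated steady state converges to $U_k$; applying the case-(2) argument to $P_R$ and letting $R\to\yy$ gives the conclusion. I expect the two delicate points to be: (i) making the inequality $\ud h(t)\le s(T+t)-X$ for \emph{all} $t\ge0$ rigorous from only $\liminf_{t\to\yy}s(t)/t>0$ and $s\in C([0,\yy))$ (there is no information on $s'$), which is exactly the reason one needs the large starting time $T$ rather than a pointwise ODE comparison; and (ii) the uniform-in-$x$ control near $x=\yy$ needed both for the uniform upper bound and for the truncation limit. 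These are where the advertised technical lemma and the modified iteration should enter.
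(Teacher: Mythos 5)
Your overall structure for part (1) and the uniform upper bound matches the paper: a constant super-solution $M=\max\{\|u(0,\cdot)\|_\infty,\alpha/k_\infty\}$ evolved on $[0,\infty)$ gives a nonincreasing-in-$t$ barrier converging to $U_k$, while the lower bound on compacts comes from the steady-state problems on $[0,l]$ of Remark~\ref{r4.1}. The paper separates out the uniform upper bound (its \eqref{x4.4a}) from the first comparison by using $\limsup_{t\to\infty}u\le\alpha/k_\infty$ uniformly, then $U_k(x)\to\alpha/k_\infty$, then the locally uniform convergence of (1); your Dini-on-compacts-plus-tail argument is the same idea.

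For the lower bound in (2)--(3) you take a genuinely different route. The paper does \emph{not} go through the free boundary problem \eqref{2.4}: after shifting to $[L,\infty)$ where $k\le k_\infty+\ep$, it compares $u$ against the explicit traveling sub-solution $\ud u(t,x)=(1-\ep)\phi(x-\ud s(t))$, where $\phi$ is the semi-wave of \eqref{2.2} with $\beta=k_\infty+\ep$ and $\ud s(t)=c_1(t-T_1)+2L$ moves at a \emph{prescribed} rate $c_1<\min\{s_{\rm inf},C_*\}$. Because $\ud s$ is linear with slope $c_1$, the domain inclusion $\ud s(t)\le s(t)$ is immediate from $\liminf_t s(t)/t>c_1$ and monotonicity of $s$. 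This construction requires $P$ to have compact support (so that $\int_{-\infty}^{\ud s(t)}P(x-y)\phi(\cdot)\,\dy=\int_{L}^{\ud s(t)}P(x-y)\phi(\cdot)\,\dy$ for $x\ge 2L$), and the paper handles general $P$ by truncating $P_n=\xi(\cdot/n)P$ and passing to the limit, using the limit $\lim_n U_k^n=U_k$ in $L^\infty$. Your route replaces the traveling-front sub-solution by a full free boundary solution $(\ud w,\ud h)$ of \eqref{2.4}, applies Proposition~\ref{p2.4}, and therefore avoids the compact-support truncation in part (2); the price is exactly the delicate point you flag, namely establishing $\ud h(t)\le s(T+t)-X$ for \emph{all} $t\ge0$ from the merely asymptotic information $\liminf s(t)/t=s_{\rm inf}$. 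This can in fact be completed: under {\bf (J1)} one has the a priori bound $\ud h'(t)\le\mu\|\ud w\|_\infty\int_0^\infty zP(z)\,\dd z=:C_0$, hence $\ud h(t)\le h_0+C_0t$ for all $t$, which together with $\ud h(t)/t\to c^*<s_{\rm inf}$ and $s(T+t)\ge(s_{\rm inf}-\eta)(T+t)$ for $T$ large gives the inclusion after choosing $T\ge\max\{T_2,(X+h_0+C_0T_1)/(s_{\rm inf}-\eta)\}$. When {\bf (J1)} fails, $C_0=\infty$ and $\ud h(t)/t\to\infty$, so the inclusion cannot hold; you correctly fall back to the truncation then. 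So the argument is sound modulo completing this estimate. Two small corrections: in part (3) with {\bf (J1)} true but {\bf (J2)} false, the semi-wave \emph{does} exist (your phrasing ``no semi-wave exists'' applies only when {\bf (J1)} also fails); and the ``modified iteration'' the paper advertises appears in Theorem~\ref{t4.1}, not here, so (ii) is resolved just by Proposition~\ref{p4.1}(3) (the $L^\infty$-stability of $U_k$ in $k$) rather than by any iteration.
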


Due to its length, we will divide the proof into several parts.

\begin{proof}[Proof of Proposition {\rm\ref{p4.2}(1)}] We first prove $\liminf_{t\to\yy}u(t,x)\ge U_k(x)$ locally uniformly in $[0,\yy)$. For any $L>0$ and small $\delta>0$, according to Remark \ref{r4.1}, we can find a large $l>L_0+L$ such that problem \eqref{4.2} has a unique positive solution $u_l$, and $u_l(x)\ge U_k(x)-\delta$ for $x\in[0,L]$.
It is easy to check that for all $\ep\in(0,1)$, $u_l$ satisfies
 \bes\label{4.4}
 d\int_{0}^lP(x-y)\ep u_l(y)\dy-d\ep u_l+\ep u_l(\alpha-k(x)\ep u_l)\ge0, ~ ~ x\in[0,l].
 \ees
Let $\ud u$ be the unique solution of
\bess
\begin{cases}
 \ud u_t=\dd d\int_{0}^lP(x-y)\ud u(t,y)\dy-d\ud u+\ud u(\alpha-k(x)\ud u), & t>0,~ x\in[0,l], \\
  \ud u(0,x)=\ep u_l, & x\in[0,l].
\end{cases}
\eess
Clearly, $0<\ud u<\max\{\|\ep u_l\|_{\yy},\alpha/k_{\yy}\}$. Due to \eqref{4.4} and a comparison argument, we know that $\ud u(t,x)$ is nondecreasing in $t\ge0$. Thus, $\lim_{t\to\yy}\ud u(x,t)=u_l(x)$, and so $\liminf_{t\to\yy}u(t,x)\ge u_l(x)$ uniformly in $[0,l]$. As $u_l(x)\ge U_k(x)-\delta$ for $x\in[0,L]$, there exists $T>0$ such that $u(t,x)\ge U_k(x)-2\delta$ for $t\ge T$ and $x\in[0,L]$, which implies the desired conclusion.

It then remains to prove $\limsup_{t\to\yy}u(t,x)\le U_k(x)$ locally uniformly in $[0,\yy)$. Let $\bar{u}$ be the unique solution of
 \bess\begin{cases}
 \bar u_t=\dd d\int_{0}^{\yy}P(x-y)\bar u(t,y)\dy-d\bar u+\bar u(\alpha-k(x)\bar u), & t>0,~ x\in[0,\yy), \\
  \bar u(0,x)=\max\{\|u(0,x)\|_{\yy},\alpha/k_{\yy}\}, & x\in[0,\yy).
 \end{cases}\eess
By a comparison method (see \cite[Lemma 2.2]{LLW22}), $\bar{u}(t,x)\ge \ep U_k(x)$ for $t,x\ge 0$ and some $\ep>0$, and $\bar{u}(t,x)$ is nonincreasing in $t\ge0$. Thus  $\lim_{t\to\yy}\bar{u}(t,x)=U_k(x)$ locally uniformly in $[0,\yy)$. Besides, the  comparison principle gives $u(t,x)\le \bar{u}(t,x)$ for $t\ge0$ and $x\in[0,s(t)]$. So  the proof is ended.\end{proof}

In order to prove conclusions (2) and (3), we first prove
 \bes
 \limsup_{t\to\yy}u(t,x)\le U_k(x)\;\;\;\mbox{uniformly in}\;\;[0,\yy).
 \lbl{x4.4a}\ees
Clearly, it is sufficient to show that for any small $\delta>0$, there exists a large $T>0$ such that $u(t,x)\le U_k(x)+\delta$ for $t\ge T$ and $x\ge0$.

Using a comparison argument yields that $\limsup_{t\to\yy}u(t,x)\le \alpha/k_{\yy}$ uniformly in $[0,\yy)$. Thus there is a large $T_1>0$ such that $u(t,x)\le \alpha/k_{\yy}+\delta/2$ for $t\ge T_1$ and $x\ge0$. Notice that $U_k(x)\to\alpha/k_{\yy}$ as $x\to\yy$. We can find a large $X>0$ such that $U_k(x)+\delta>\alpha/k_{\yy}+\delta/2$. Hence $u(t,x)\le U_k(x)+\delta$ for $t\ge T_1$ and $x\ge X$. On the other hand, in light of (1), there is a large $T_2>0$ such that $u(t,x)\le U_k(x)+\delta$ for $t\ge T_2$ and $x\in[0,X]$. Then \eqref{x4.4a} is proved.

\begin{proof}[Proof of Proposition {\rm\ref{p4.2}(2)}] Notice that \qq{x4.4a}.  It suffices to prove $\liminf_{t\to\yy}u(t,x)\ge U_k(x)$ uniformly in $x\in[0,ct]$ for all $0\le c<\min\{s_{\rm inf},C_*\}$. Obviously, it is enough to show that for any small $\ep>0$, there exists a large $T>0$ such that
 \bes
 u(t,x)\ge U_k(x)-2\sqrt{\ep}\;\;\;\mbox{ for}\;\; t\ge T,\; x\in[0,ct].
  \lbl{x4.5}\ees
We will handle it in two situations.

{\it Case 1: $P$ has a compact support, i.e., ${\rm supp}\,P\in[-l_0,l_0]$ for some $l_0>0$}. Choose $c$ and $c_1$ satisfying $0<c<c_1<\min\{s_{{\rm inf}},C_*\}$. We can take $0<\ep\ll 1$ such that
  \[\frac{\alpha}{k_{\yy}}
-\frac{\alpha(1-\ep)}{k_{\yy}+\ep}\le \sqrt{\ep}.\]
For such $\ep$, using the properties of $k(x)$ and $U_k(x)$, we can choose $L\gg 2l_0$ such that $k(x)\le k_{\yy}+\ep$ and $U_k(x)\ge\frac{\alpha}{k_{\yy}+\ep}$ for $x\ge L$. By the result (1) and $c_1<s_{{\rm inf}}$, there is $T_1\gg 1$ so that
  \[s(t)\ge c_1t+2L,\;\;ct>L,\;\;u(t,x)\ge(1-\ep)U_k(x)
  \ge\frac{\alpha(1-\ep)}{k_{\yy}+\ep} ~ ~ {\rm for ~ }t\ge T_1, ~ x\in[L,2L].\]
Set $\ud s(t)=c_1(t-T_1)+2L$. In view of \eqref{4.3}, $u$ satisfies
 \bess\begin{cases}
  u_t\ge\dd d\int_{L}^{\ud s(t)}P(x-y)u(t,y)\dy-du+u[\alpha-(k_{\yy}+\ep)u], & t>T_1,~ x\in[2L,\ud s(t)], \\[2mm]
  u(t,\ud s(t))>0, \;\;
  u(t,x)\ge\dd\frac{\alpha(1-\ep)}{k_{\yy}+\ep}, ~ &t\ge T_1, ~ x\in[L,2L].
 \end{cases}\eess
Let $(c^{\mu}_{\ep},\phi)$ be the unique solution pair of \eqref{2.2} with $\beta=k_{\yy}+\ep$. By Proposition \ref{p2.3}, we know that $C_{*}$ is the minimal speed of \eqref{2.3} with $\beta$ replaced by $k_{\yy}+\ep$ and $\lim_{\mu\to\yy}c^{\mu}_{\ep}= C_{*}$. We can choose $\mu\gg 1$ such that $c_1<\min\{s_{{\rm inf}},c^{\mu}_{\ep}\}$. Define
\[\ud u(t,x)=(1-\ep)\phi(x-\ud s(t)) ~ ~ {\rm for ~ }t\ge T_1, ~ x\in[L,\ud s(t)].\]
We now show that $\ud u$ satisfies
\bes\label{4.5}
\begin{cases}
 \ud u_t\le\dd d\int_{L}^{\ud s(t)}P(x-y)\ud u(t,y)\dy-d\ud u+\ud u[\alpha-(k_{\yy}+\ep)\ud u], & t>T_1,~ x\in[2L,\ud s(t)], \\[2mm]
  \ud u(t,\ud s(t))=0, \;\;
 \ud u(t,x)\le u(t,x), ~ &t\ge T_1, ~ x\in[L,2L].
\end{cases}
\ees
Once we have done that, by a comparison method, we have
 \bes\label{4.6}
u(t,x)\ge \ud u(t,x) ~ ~ {\rm for ~ }t\ge T_1, ~ x\in[L,\ud s(t)].
\ees
Obviously, $\ud u(t,\ud s(t))=0$. Besides, we know that, for $t\ge T_1$ and $x\in[L,2L]$,
 \[u(t,x)\ge\frac{\alpha(1-\ep)}{k_{\yy}+\ep}\ge (1-\ep)\phi(x-\ud s(t))=\ud u(t,x).\]
So it remains to show the first inequality of \eqref{4.5}. Set $z=x-\ud s(t)$. Direct computations yield
\bess
\ud u_t&=&-c_1(1-\ep)\phi'(x-\ud s(t))\le -c^{\mu}_{\ep}(1-\ep)\phi'(x-\ud s(t))\\
&=&(1-\ep)\left(d\int_{-\yy}^{\ud s(t)}P(x-y)\phi(y-\ud s(t))\dy-d\phi(z)+\phi(z)[\alpha-(k_{\yy}+\ep)\phi(z)]\right)\\
&=&(1-\ep)\left(d\int_{L}^{\ud s(t)}P(x-y)\phi(y-\ud s(t))\dy-d\phi(z)+\phi(z)[\alpha-(k_{\yy}+\ep)\phi(z)]\right)\\
&\le&d\int_{L}^{\ud s(t)}P(x-y)\ud u(t,y)\dy-d\ud u+\ud u[\alpha-(k_{\yy}+\ep)\ud u].
\eess
Thus \eqref{4.5} holds, so does \qq{4.6}.

Now we prove \qq{x4.5}. Firstly, the direct calculations yields
\bess
\max_{x\in[L,ct]}|\ud u(t,x)-U_k(x)|&\le&\max_{x\in[L,ct]}\kk|\ud u(t,x)-\frac{\alpha(1-\ep)}{k_{\yy}+\ep}\rr|
+\max_{x\in[L,ct]}\kk|\frac{\alpha(1-\ep)}{k_{\yy}+\ep}-U_k(x)\rr|\\[1mm]
&&=(1-\ep)\left(\frac{\alpha(1-\ep)}{k_{\yy}+\ep}-\phi(ct-\ud s(t))\right)+\max_{x\in[L,ct]}\kk|\frac{\alpha(1-\ep)}{k_{\yy}+\ep}-U_k(x)\rr|\\[1mm]
&&\le(1-\ep)\left(\frac{\alpha(1-\ep)}{k_{\yy}+\ep}-\phi(ct-\ud s(t))\right)+\frac{\alpha}{k_{\yy}}-\frac{\alpha(1-\ep)}{k_{\yy}+\ep}\\[1mm]
&&\le(1-\ep)\left(\frac{\alpha(1-\ep)}{k_{\yy}+\ep}-\phi(ct-\ud s(t))\right)+\sqrt{\ep}\to\sqrt{\ep}
 \eess
as $t\to\yy$. There is $T_2\gg T_1$ such that $\max_{x\in[L,ct]}|\ud u(t,x)-U_k(x)|\le 2\sqrt{\ep}$, which, combined with \eqref{4.6}, implies that
\[u(t,x)\ge\ud u(t,x)\ge U_k(x)-2\sqrt{\ep} ~ ~ {\rm for ~ }t\ge T_2, ~ x\in[L,ct].\]
Secondly, owing to statement (1), there exists $T_3>0$ such that $u(t,x)\ge U_k(x)-2\sqrt{\ep}$ for $t\ge T_3$ and $x\in[0,L]$. Set $T=\max\{T_2,T_3\}$. Then \qq{x4.5} holds.

{\it Case 2: $P$ is not supported compactly}. Let
\begin{align*}
\xi(x)=\left\{\begin{aligned}
&1,& &|x|\le1,\\
&2-|x|,& &1\le|x|\le2,\\
&0, & &|x|\ge2
\end{aligned}\right. ~ ~ {\rm and ~ ~ }P_n(x)=\xi\kk(\frac{x}{n}\rr)P(x).
  \end{align*}
Clearly, $P_n$ are supported compactly for all $n\ge1$ and nondecreasing in $n$, $P_n\le P$, and $\lim_{t\to\yy}P_n=P$ in $L^1(\mathbb{R})\cap C_{\rm loc}(\mathbb{R})$. Take $n\gg 1$ such that $\alpha+d\|P_n\|_{L^1}-d>0$. Let $u_n$ be the unique solution of \eqref{4.3} with $P=P_n$. Then $u(t,x)\ge u_n(t,x)$ for $t\ge0$ and $x\in[0,s(t)]$, and $u_n$ satisfies
  \bess
\begin{cases}
  \partial_t u_n=\dd d_n\int_{0}^{s(t)}\hat P_n(x-y)u_n(t,y)\dy-d_nu_n+u_n(\alpha+d_n-d-k(x)u_n), & t>0,~ x\in[0,s(t)), \\
  u_n(t,s(t))=0, & t>0, \\
  u_n(0,x)=u(0,x),
\end{cases}
\eess
where $d_n=d\|P_n\|_{L^1}$ and $\hat{P}_n=P_n\|P_n\|^{-1}_{L^1}$.

For any small $\ep>0$, it follows from Proposition \ref{p2.2} that semi-wave problem
\begin{eqnarray*}\left\{\begin{array}{lll}
 d_n\displaystyle\int_{-\infty}^{0}\hat P_n(x-y)\phi(y){\rm d}y-d_n\phi+c\phi'+\phi[\alpha+d_n-d-(k_{\yy}+\ep)\phi]=0,\;\; x\in(-\infty,0),\\[3mm]
\phi(-\infty)=\dd\frac{\alpha+d_n-d}{k_{\yy}+\ep},\ \ \phi(0)=0, \ \ c=\mu\int_{-\infty}^{0}\int_0^{\infty}\!\hat P_n(x-y)\phi(x){\rm d}y{\rm d}x
 \end{array}\right.
 \end{eqnarray*}
has a unique solution pair $(c^{\mu,n}_{\ep},\phi_{n})$ with $c^{\mu,n}_{\ep}>0$ and $\phi'_n<0$. Moreover, by \cite[Theorem 4.4]{LLW22}, we know that if {\bf (J1)} holds, then $\lim_{n\to\yy}c^{\mu,n}_{\ep}=c^{\mu}_{\ep}$; if {\bf (J1)} does not hold, then $\lim_{n\to\yy}c^{\mu,n}_{\ep}=\yy$. For any given $0<c<c_1<\min\{s_{{\rm inf}},C_*\}$, we can choose $0<\ep\ll 1$ and $n\gg 1$ such that $c_1<c^{\mu,n}_{\ep}$. Then we can argue as in the step 1 to show that $\liminf_{t\to\yy}\ud u(t,x)\ge U^{n}_k(x)$ uniformly in $[0,ct]$, so
 \bes
 \liminf_{t\to\yy}u(t,x)\ge U^{n}_k(x)\;\;\;\mbox{ uniformly\, in}\;\;[0,ct],
 \lbl{x4.7}\ees
where $U^n_k\in C([0,\yy))$ is the unique bounded positive solution of
 \bess
d\int_{0}^{\yy}P_n(x-y)U(y)\dy-dU+U(\alpha-k(x)U)=0, ~ ~ x\in[0,\yy).
 \eess
And $U^n_k$ is strictly increasing to $(\alpha+d_n-d)/k_{\yy}$ as $x\to\yy$.

In the following we prove
 \bes
 \lim_{n\to\yy}U^n_k=U_k\;\;\;\mbox{uniformly\, in}\;\; [0,\yy).
 \lbl{x4.9}\ees
Once this is done, then by \qq{x4.7}, we can complete the proof of Step 2. In order to prove \qq{x4.9}, since $U^n_k\le U_k$, it is enough to show that for any given $0<\ep\ll 1$, there exists $N\gg 1$ such that $U^n_k(x)\ge U_k(x)-\ep$ for all $n\ge N$ and $x\ge0$.

As $P_n$ is nondecreasing in $n$ and $P_n\le P$, we can argue as in the proof of Proposition \ref{p4.1} to show that $U^n_k$ is also nondecreasing in $n$ and $U^n_k\le U_k$ for $x\ge0$. Thus $U^{\yy}(x):=\lim_{n\to\yy}U^n_k(x)$ is well defined, and $U^{\yy}\le U_{k}$. By the dominated convergence theorem, $U^{\yy}$ is a bounded positive solution of \eqref{4.1}. Then, by Proposition \ref{p4.1}, $U^{\yy}=U_k$. So  $\lim_{n\to\yy}U^n_k=U_k$ locally uniformly in $[0,\yy)$ by Dini's theorem.

On the other hand, it is clear that there exists $N_1$ such that
  \[\frac{\alpha+d_n-d}{k_{\yy}}>\frac{\alpha}{k_{\yy}}-\frac{\ep}{2}, ~ ~ \forall ~ n\ge N_1.\]
Since $U^{N_1}_k(x)\to(\alpha+d_{N_1}-d)/k_{\yy}$ and $U_k(x)\to \alpha/k_{\yy}$ as $x\to\yy$, one can find $X\gg 1$ such that $U^{N_1}_k(x)\ge U_k(x)-\ep$ for $x\ge X$. Recall that $U^n_k$ is also nondecreasing in $n$. Thus $U^{n}_k(x)\ge U_k(x)-\ep$ for $x\ge X$ and $n\ge N_1$.
Since we have shown that $\lim_{n\to\yy}U^n_k= U_k$ uniformly in $[0,X]$, there is $N_2$ such that $U^n_k(x)\ge U_k(x)-\ep$ for $n\ge N_2$ and $x\in[0,X]$. Thus, \qq{x4.9} holds.
\end{proof}

\begin{proof}[Proof of Proposition {\rm\ref{p4.2}(3)}] Remember that $P$ violates {\bf (J2)}. If {\bf (J1)} holds, then $\lim_{n\to\yy}c^{\mu,n}_{\ep}=c^{\mu}_{\ep}$ and $\lim_{\mu\to\yy}c^{\mu}_{\ep}=\yy$; if {\bf (J1)} does not hold, then $\lim_{n\to\yy}c^{\mu,n}_{\ep}\to\yy$. In either case, for any given $c\in[0, s_{\rm inf})$, we can choose $0<\ep\ll 1$ and $n,\mu\gg 1$ such that $c<c^{\mu,n}_{\ep}$. Then arguing as above, we can show that $\liminf_{t\to\yy}u(t,x)\ge U_k(x)$ uniformly in $[0,ct]$ for all $c\in[0, s_{\rm inf})$. This combined with \qq{x4.4a} finishes the proof.
\end{proof}

\begin{theorem}\label{t4.1}Let $(u,v,s_1,s_2)$ be the unique solution of \eqref{1.1}, and $(\tilde u,\tilde v)$ be the unique bounded positive solution of \eqref{2.11}. Let $\ud c_1$ and $ \ud c_2$ be given by \qq{x3.1}. Suppose that spreading occurs for \eqref{1.1}.
\begin{enumerate}[$(1)$]
  \item If $J_i$ with $i=1,2$ satisfy {\bf (J1)}, then
  \[\lim_{t\to\yy}(u(t,x),v(t,x))=(\tilde u(x),\tilde v(x)) ~ ~ {\rm uniformly ~ in ~ }[0,ct],\;\;\forall\;0\le c<\min\{\ud c_1,\,\ud c_2\}.\]
  \item If $J_1$  satisfies {\bf (J1)} and $J_2$ violates {\bf (J1)}, then
  \[\lim_{t\to\yy}(u(t,x),v(t,x))=(\tilde u(x),\tilde v(x)) ~ ~ {\rm uniformly ~ in ~ }[0,ct],\;\;\forall\;0\le c<\ud c_1.\]
 \item If $J_1$  does not satisfy {\bf (J1)} and $J_2$ satisfies {\bf (J1)}, then
  \[\lim_{t\to\yy}(u(t,x),v(t,x))=(\tilde u(x),\tilde v(x)) ~ ~ {\rm uniformly ~ in ~ }[0,ct],\;\;\forall\;0\le c<\ud c_2.\]
  \item  If both $J_i$ with $i=1,2$ violate {\bf (J1)}, then
  \[\lim_{t\to\yy}(u(t,x),v(t,x))=(\tilde u(x),\tilde v(x)) ~ ~ {\rm uniformly ~ in ~ }[0,ct],\;\;\forall\;c\ge0.\]
\end{enumerate}
\end{theorem}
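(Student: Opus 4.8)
The plan is to run a monotone iteration tailored to the cooperative structure of \eqref{1.1}. Since spreading for \eqref{1.1} means $s_1(\yy)=s_2(\yy)=\yy$, Theorem \ref{t3.1}(4) already gives $(u,v)(t,x)\to(\tilde u(x),\tilde v(x))$ locally uniformly in $[0,\yy)$ together with $\liminf_{t\to\yy}s_i(t)/t\ge\ud c_i$ (and $=\yy$ when $J_i$ violates {\bf (J1)}); what has to be upgraded is ``locally uniformly'' to ``uniformly on $[0,ct]$''. Because $f_1(u,v)$ is nondecreasing in $v$ and $f_2(u,v)$ is nondecreasing in $u$, I will squeeze $(u,v)$ between a nondecreasing sequence of lower approximations and a nonincreasing sequence of upper approximations, each built as a function $U_k$ of Proposition \ref{p4.1} from the scalar half-space problem \eqref{4.1}, and each converging monotonically to the unique bounded positive solution $(\tilde u,\tilde v)$ of \eqref{2.11} (Theorem \ref{p2.6}). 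Uniformity on $[0,ct]$ at each step comes from Proposition \ref{p4.2}, continuity in the (space-dependent) coefficient from Proposition \ref{p4.1}(2)--(3), and the admissible range of $c$ from Lemma \ref{l2.1} and the lower bounds on $s_i(t)/t$. I carry out case (1) in detail; cases (2)--(4) follow because $J_i$ violating {\bf (J1)} also violates {\bf (J2)}, so Proposition \ref{p4.2}(3) together with $\liminf s_i(t)/t=\yy$ removes every speed restriction coming from the $i$-th equation.

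For the lower bound, set $\ud u_0\equiv0$ and define inductively $\ud v_n:=U_{k_{v,n}}$ with $k_{v,n}(x)=r_2(1+(1+q\ud u_n(x))^{-1})$, then $\ud u_{n+1}:=U_{k_{u,n+1}}$ with $k_{u,n+1}(x)=r_1(1+(1+b\ud v_n(x))^{-1})$. Each $\ud u_n,\ud v_n$ is nondecreasing in $x$, so the coefficients are nonincreasing in $x$ and Proposition \ref{p4.1} applies; by monotonicity of $U_k$ in $k$ the two sequences increase in $n$, stay below $(\tilde u,\tilde v)$ (comparing with the equations for $\tilde u,\tilde v$ via the uniqueness in Proposition \ref{p4.1}(1)), and hence increase to a bounded positive solution of \eqref{2.11}, i.e.\ to $(\tilde u,\tilde v)$. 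The core is the induction claim: $\liminf_{t\to\yy}u(t,x)\ge\ud u_n(x)$ and $\liminf_{t\to\yy}v(t,x)\ge\ud v_n(x)$ uniformly on $[0,ct]$ for all $c<\min\{\ud c_1,\ud c_2\}$. Given the bound for $u$ at level $n$, fix such a $c$, choose $\rho\in(c,\min\{\ud c_1,\ud c_2\})$, and note that for small $\ep>0$ one has $u(t,x)\ge\ud u_n(x)-\ep$ on $[0,\rho t]$ for $t$ large; since $\rho<\ud c_2\le\liminf s_2(t)/t$, $[0,\rho t]\subset[0,s_2(t))$ for $t$ large, and there $f_2(u,v)\ge v(r_2-r_2(1+(1+q(\ud u_n(x)-\ep))^{-1})v)$, so, discarding the part of the nonlocal term outside $[0,\rho t]$ (which only helps), $v$ is a supersolution of \eqref{4.3} posed on the slower domain $[0,\rho t)$ with $P=J_2$ and coefficient $r_2(1+(1+q(\ud u_n(x)-\ep))^{-1})$. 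Proposition \ref{p4.2} applied to this moving-boundary problem (for which $\liminf s(t)/t=\rho$, and $\rho\le C_*$ when $J_2$ satisfies {\bf (J2)}) yields $\liminf v(t,x)\ge U_k(x)$ uniformly on $[0,ct]$; letting $\ep\downarrow0$ (Proposition \ref{p4.1}(3): the coefficient decreases to $k_{v,n}$ in $L^\yy$) gives $\liminf v\ge\ud v_n$ uniformly on $[0,ct]$, and the step from $\ud v_n$ to $\ud u_{n+1}$ is symmetric. Sending $n\to\yy$, with $\ud u_n\to\tilde u$ and $\ud v_n\to\tilde v$ uniformly (Dini's theorem, together with the fact that all $\ud u_n(\yy)$ increase to $u^*$ and all $\ud v_n(\yy)$ to $v^*$), one obtains $\liminf_{t\to\yy}(u,v)(t,x)\ge(\tilde u(x),\tilde v(x))$ uniformly on $[0,ct]$.

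For the upper bound the geometry is easier, because first $\limsup_{t\to\yy}(u,v)(t,x)\le(u^*,v^*)$ \emph{uniformly} in $[0,\yy)$: comparing $(u,v)$ with the spatially constant supersolution given by the (cooperative) kinetic system $U'=f_1(U,V),\ V'=f_2(U,V)$ started above $(u^*,v^*)$ and above $(\|u_0\|_\yy,\|v_0\|_\yy)$, one checks that $\{U\ge u^*,V\ge v^*\}$ is positively invariant (on $U=u^*$, $\dot U=f_1(u^*,V)\ge f_1(u^*,v^*)=0$, and similarly on $V=v^*$) and that the monotone planar flow there converges to the unique positive equilibrium $(u^*,v^*)$ of \eqref{1.2}. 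Then I iterate downward: $\ol u_0\equiv u^*$, $\ol v_0\equiv v^*$, $\ol v_n:=U_{r_2(1+(1+q\ol u_n)^{-1})}$, $\ol u_{n+1}:=U_{r_1(1+(1+b\ol v_n)^{-1})}$. From $v\le v^*+o(1)$ and the monotonicity of $f_1$, $u$ is a subsolution of \eqref{4.3} with $P=J_1$, coefficient $r_1(1+(1+bv^*)^{-1})$ and boundary $s_1(t)$ itself, so Proposition \ref{p4.2} gives $\limsup u(t,x)\le\ol u_1(x)$ uniformly on $[0,ct]$ for $c<\ud c_1$; crucially, for $x>ct$ the inequality still holds because $u\le u^*+o(1)$ while $\ol u_1(\yy)=u^*$, so passing to the $v$-equation the comparison coefficient $r_2(1+(1+q(\ol u_1(x)+o(1)))^{-1})$ is legitimate on all of $[0,s_2(t))$ and Proposition \ref{p4.2} can be used on the true boundary $s_2(t)$. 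Iterating and letting $n\to\yy$ (the decreasing families converge to a bounded positive solution of \eqref{2.11}, hence to $(\tilde u,\tilde v)$, uniformly since all $\ol u_n(\yy)=u^*$, $\ol v_n(\yy)=v^*$) gives $\limsup_{t\to\yy}(u,v)(t,x)\le(\tilde u(x),\tilde v(x))$ uniformly on $[0,ct]$. Together with the lower bound this proves (1), and the reduction noted in the first paragraph yields (2)--(4).

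The main obstacle is the lower bound. The induction hypothesis controls $u$ (resp.\ $v$) only on $[0,ct]$ and $u$ vanishes past $s_1(t)$, so one cannot build the subsolution for $v$ on the true free boundary; working on the slower domain $[0,\rho t)$ forces $\rho$ to obey simultaneously $\rho<\ud c_1$ (so the previous step's estimate holds on $[0,\rho t]$), $\rho<\ud c_2$ (so $[0,\rho t]\subset[0,s_2(t))$) and $c<\rho$ (so Proposition \ref{p4.2} returns uniformity on $[0,ct]$), and one must verify these reconcile to exactly $c<\min\{\ud c_1,\ud c_2\}$ at every iteration — which is possible because the coefficients $k_{v,n},k_{u,n}$ stay in $(0,2r_i]$ at $x=\yy$, so by Lemma \ref{l2.1} the semi-wave speeds entering Proposition \ref{p4.2} never drop below $\ud c_i$. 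A secondary, routine, point is to promote the several monotone convergences to uniform ones, which Dini's theorem handles once one observes that all approximants share the common value $u^*$ (resp.\ $v^*$) at $x=\yy$.
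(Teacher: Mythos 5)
Your proof is correct and runs on the same engine as the paper's: a monotone alternating iteration through the scalar half-space problem (Proposition \ref{p4.1}), each step passed through the moving-boundary comparison problem (Proposition \ref{p4.2}) on a linearly-moving sub-domain $[0,\rho t]$ with $c<\rho<\min\{\ud c_1,\ud c_2\}$ (the paper phrases this with an auxiliary decreasing sequence $c_n\searrow c$, which amounts to the same thing), followed by the same ``tail at $x=\yy$ plus Dini'' upgrade from locally uniform to uniform convergence of the iterates to $(\tilde u,\tilde v)$. The only real divergence is the upper bound: the paper's Step~1 obtains $\limsup_{t\to\yy}(u,v)\le(\tilde u,\tilde v)$ uniformly on all of $[0,\yy)$ in one shot, by combining the ODE comparison $\limsup\le(u^*,v^*)$ with the locally uniform convergence already supplied by Theorem \ref{t3.1}(4) and the fact that $(\tilde u,\tilde v)(\yy)=(u^*,v^*)$. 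You instead run a second, downward iteration $\ol u_n,\ol v_n$ parallel to the lower one. That works — the key observation that $\ol u_n(\yy)=u^*$ and $\ol v_n(\yy)=v^*$ for every $n$ is correct and is what makes the uniform upgrade go through at each step — but it is more machinery than needed, since you already invoke Theorem \ref{t3.1}(4), which gives the upper bound directly. Two small points to tidy: replace $\ud u_n(x)-\ep$ by $\max\{\ud u_n(x)-\ep,0\}$ so the coefficient $k(x)$ is well-defined for small $x$, and the first line of your downward iteration is written inconsistently (you declare $\ol v_0\equiv v^*$ as a constant, yet the recursion $\ol v_n:=U_{r_2(1+(1+q\ol u_n)^{-1})}$ applied at $n=0$ would give a nonconstant function; state plainly that $(\ol u_0,\ol v_0)\equiv(u^*,v^*)$ and the recursion starts at $n=1$).
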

\begin{proof}(1) {\it Step 1}. We first prove
 \bes
 \limsup_{t\to\yy}(u(t,x),v(t,x))\le (\tilde u(x),\tilde v(x))\;\;\;\mbox{uniformly \,in}\;\; [0,\yy).
 \lbl{x4.12}\ees
Clearly, it suffices to show that for any small $\ep>0$, there exists a $T>0$ such that
   \[(u(t,x),v(t,x))\le (\tilde u(x)+\ep,\tilde v+\ep) ~ ~ {\rm for ~ }t\ge T, ~ x\in[0,\yy).\]
Let $(\bar{u}(t),\bar{v}(t))$ be the unique solution of the following ODE system
\bess
\begin{cases}
\bar{u}_t=f_1(\bar{u},\bar{v}), ~\; \bar{v}_t=f_2(\bar{u},\bar{v}), ~ \;t>0,\\
\bar{u}(0)=K_1, ~ \bar{v}(0)=K_2,
\end{cases}
\eess
where $(K_1,K_2)$ is large enough such that $(K_1,K_2)\gg(\max\{\|u_0\|_{\yy},u^*\},\max\{\|v_0\|_{\yy},v^*\})$. A phase plane analysis arrives at $\lim_{t\to\yy}(\bar{u}(t),\bar{v}(t))=(u^*,v^*)$.
By a comparison argument, we have $\limsup_{t\to\yy}(u(t,x),v(t,x))\le (u^*,v^*)$ uniformly in  $x\in[0,\yy)$. For any small $\ep>0$, there exists some $T_1>0$ such that $(u(t,x),v(t,x))\le (u^*+\frac{\ep}{2},v^*+\frac{\ep}{2})$ for $t\ge T_1$ and $x\ge0$. On the other hand, since $\lim_{x\to\yy}(\tilde u(x),\tilde v(x))=(u^*,v^*)$, there exists a large $X>0$ such that $(\tilde u(x)+\ep,\tilde v(x)+\ep)\ge(u^*+\frac{\ep}{2},v^*+\frac{\ep}{2})$ for $x\ge X$. It then follows that
\[(u(t,x),v(t,x))\le (u^*+\ep/2,v^*+\ep/2)\le(\tilde u(x)+\ep,\tilde v(x)+\ep) ~ ~ {\rm for ~ }t\ge T_1, ~x\in[X,\yy).\]

For $x\in[0,X]$, by Theorem \ref{t3.1}(4), we can find a $T_2>0$ such that
\[(u(t,x),v(t,x))\le (\tilde u(x)+\ep,\tilde v(x)+\ep) ~ ~ {\rm for ~ }t\ge T_2, ~ x\in[0,X].\]
Setting $T=\max\{T_1,T_2\}$, we get \qq{x4.12}.

{\it Step 2}. We will show that for any given $0\le c<\min\{\ud c_1,\,\ud c_2\}$, there holds:
 \bes
 \liminf_{t\to\yy}(u(t,x),v(t,x))\ge(\tilde u(x),\tilde v(x)) ~ ~ {\rm uniformly ~ in ~ }[0,ct].\lbl{x4.13}\ees
To this aim, we first construct a sequence $\{(\ud U_n,\ud V_n)\}$ satisfying $\ud U_1=\theta_{1\yy}$ and
 \bes
\begin{cases}
 \dd d_2\int_{0}^{\yy}J_2(x-y)\ud V_n(y)\dy-d_2\ud V_n+f_2(\ud U_n,\ud V_n)=0, ~ ~ x\in[0,\yy),\\[3mm]
\dd d_1\int_{0}^{\yy}J_1(x-y)\ud U_{n+1}(y)\dy-d_1\ud U_{n+1}+f_1(\ud U_{n+1},\ud V_n)=0, ~ ~ x\in[0,\yy),
 \end{cases}\lbl{x4.14}\ees
and $\ud U_n,\ud V_n\in C([0,\yy))$, such that
 \bes
 &&\liminf_{t\to\yy}(u(t,x),v(t,x))\ge (\ud U_n(x), \ud V_n(x)) ~ ~ {\rm uniformly ~ in ~ }[0,ct],\label{x4.15}\\[2mm]
 &&\lim_{n\to\yy}(\ud U_n(x),\ud V_n(x))=(\tilde u(x),\tilde v(x)) ~ ~ {\rm locally \, uniformly\, in}\;\;[0,\yy).\lbl{x4.16}\ees

Choose $c<c_n<\min\{\ud c_1,\,\ud c_2\}$ satisfying $c_n\searrow c$ as $n\to\yy$. Note that $u$ satisfies
\bess
\begin{cases}
u_t \ge\dd d_1\int_{0}^{s_1(t)}J_1(x-y)u(t,y)\dy-d_1u+r_1u(a-2u), &t>0, ~ x\in[0,s_1(t)),\\
u(t,s_1(t))=0, &t>0,\\
u(0,x)=u_0(x).
\end{cases}
\eess
Making use of Proposition \ref{p4.2} and a comparison method, we have  $\liminf_{t\to\yy}u(t,x)\ge \ud U_1$ uniformly in $x\in[0,c_1t]$, where $\ud U_1\in C([0,\yy))$ is the unique bounded positive solution of
 \[d_1\int_{0}^{\yy}J_1(x-y)\ud U_1(y)\dy-d_1\ud U_1+r_1\ud U_1(a-2\ud U_1)=0, ~ ~ x\in[0,\yy).\]
For the given $0<\ep\ll 1$, there exists $T_\ep\gg 1$ such that $s_2(t)>c_2t+s_{20}$ and $u(t,x)\ge U_1(x)-\ep$ for $t\ge T_\ep$ and $x\in[0,c_2t+s_{20}]$. Thus $v$ satisfies
\bess
\begin{cases}
v_t\ge \dd d_2\int_{0}^{c_2t+s_{20}}J_2(x-y)v(t,y)\dy-d_2v+f_2(\ud U_1-\ep,v), &t>T_\ep, ~ x\in[0,c_2t+s_{20}],\\
v(t,c_2t+s_{20})>0, &t>T_\ep,\\
v(T_\ep,x)>0, &x\in[0,c_2 T_\ep+s_{20}].
\end{cases}
\eess
Making use of a comparison consideration and Proposition \ref{p4.2}, we have
$\liminf_{t\to\yy}v(t,x)\ge \ud V^{\ep}_1(x)$ uniformly in $x\in[0,c_3t]$,
where $\ud V^{\ep}_1\in C([0,\yy))$ is the unique bounded positive solution of
  \[d_2\int_{0}^{\yy}J_2(x-y)\ud V^{\ep}_1(y)\dy-d_2\ud V^{\ep}_1+f_2(\ud U_1-\ep,\ud V^{\ep}_1)=0, ~ ~ x\in[0,\yy).\]
Due to Proposition \ref{p4.1}, we have $\liminf_{t\to\yy}v(t,x)\ge \ud V_1(x)$ uniformly in $x\in[0,c_3t]$, where $\ud V_1\in C([0,\yy))$ is the unique bounded positive solution of
 \[d_2\int_{0}^{\yy}J_2(x-y)\ud V_1(y)\dy-d_2\ud V_1+f_2(\ud U_1,\ud V_1)=0, ~ ~ x\in[0,\yy).\]

For the given $0<\ep\ll 1$, there exists $T_\ep\gg 1$ such that $s_1(t)>c_4t+s_{10}$ and $v(t,x)\ge \ud V_1(x)-\ep$ for $t\ge T_\ep$ and $x\in[0,c_4t+s_{10}]$. Then $u$ satisfies
\bess
\begin{cases}
u_t\ge \dd d_1\int_{0}^{c_4t+s_{10}}J_1(x-y)u(t,y)\dy-d_1u+f_1(u,\ud V_1-\ep), &t>T_\ep, ~ x\in[0,c_4t+s_{10}],\\
u(t,c_4t+s_{10})>0, &t>T_\ep,\\
u(T_\ep,x)>0, &x\in[0,c_4T_\ep+s_{10}].
\end{cases}
\eess
Again, by a comparison argument and Proposition \ref{p4.2}, $\liminf_{t\to\yy}u(t,x)\ge \ud U^{\ep}_2(x)$ uniformly in $x\in[0,c_5t]$,
where $\ud U^{\ep}_2\in C([0,\yy))$ is the unique bounded positive solution of
\[d_1\int_{0}^{\yy}J_1(x-y)\ud U^{\ep}_2(y)\dy-d_1\ud U^{\ep}_2+f_1(\ud U_2,\ud V_1-\ep)=0, ~ ~x\in[0,\yy).\]
Again owing to Proposition \ref{p4.1}, we have $\liminf_{t\to\yy}u(t,x)\ge \ud U_2(x)$ uniformly in $x\in[0,c_5t]$, where $\ud U_2\in C([0,\yy))$ is the unique bounded positive solution of
   \[d_1\int_{0}^{\yy}J_1(x-y)\ud U_2(y)\dy-d_1\ud U_2+f_1(\ud U_2,\ud V_1)=0, ~ ~ x\in[0,\yy).\]
Similarly, we can derive $\liminf_{t\to\yy}v(t,x)\ge \ud V_2(x)$ uniformly in  $x\in[0,c_7t]$, where $\ud V_2\in C([0,\yy))$ is the unique bounded positive solution of
  \[d_2\int_{0}^{\yy}J_2(x-y)\ud V_2(y)\dy-d_2\ud V_2+f_2(\ud U_2,\ud V_2)=0, ~ ~  x\in[0,\yy).\]

Repeating the above process, we can get a sequence $\{(\ud U_n,\ud V_n)\}$ satisfying \qq{x4.14} and \qq{x4.15}, and $(\ud U_n,\ud V_n)\in [C([0,\yy))]^2$. Moreover, by virtue of Proposition \ref{p4.1}, we can prove that
\[\ud U_1(x)\le \ud U_n(x)\le \ud U_{n+1}(x)\le u^*, ~ ~ ~ \ud V_1(x)\le \ud V_n(x)\le \ud V_{n+1}(x)\le v^*, ~ ~x\in[0,\yy).\]
By the dominated convergence theorem, Theorem \ref{p2.6} and Dini's theorem, we see that \qq{x4.16} holds.

Recall \qq{x4.15}. To prove \qq{x4.13}, it suffices to show that $\lim_{n\to\yy}(\ud U_n(x),\ud V_n(x))=(\tilde u(x),\tilde v(x))$ uniformly in $[0,\yy)$. For clarity, we set $(\ud U_n(\yy),\ud V_n(\yy)):=\lim_{x\to\yy}(\ud U_n(x),\ud V_n(x))$. By the definition of $(\ud U_n,\ud V_n)$ and Proposition \ref{p4.1}, we have
\bess
\ud U_1(\yy)=\frac{a}{2}, ~1-\ud V_n(\yy)-\frac{\ud V_n(\yy)}{1+q\ud U_n(\yy)}=0, ~ a-\ud U_{n+1}(\yy)-\frac{\ud U_{n+1}(\yy)}{1+b\ud V_n(\yy)}=0.
\eess
Then it is easy to derive that $\lim_{n\to\yy}(\ud U_n(\yy),\ud V_n(\yy))\to(u^*,v^*)$. Then for the given $0<\ep\ll 1$, we can find $N_1\gg 1$ such that $(\ud U_{N_1}(\yy),\ud V_{N_1}(\yy))\ge(u^*-\frac{\ep}{2},v^*-\frac{\ep}{2})$. Noticing that $\lim_{x\to\yy}(\tilde u(x),\tilde v(x))=(u^*,v^*)$. By the continuity, there exists $X\gg 1$ such that $(\ud U_{N_1}(x),\ud V_{N_1}(x))\ge (\tilde u(x)-\ep,\tilde v(x)-\ep)$ for $x\ge X$. Since $(\ud U_n,\ud V_n)$ is nondecreasing in $n\ge1$, we conclude
 \[(\ud U_n(x),\ud V_n(x))\ge (\tilde u(x)-\ep,\tilde v(x)-\ep) ~ ~ {\rm for ~ }n\ge N_1, ~ x\in[X,\yy).\]
On the other hand, by \qq{x4.16}, there exists $N_2\gg 1$ such that
  \[(\ud U_n(x),\ud V_n(x))\ge (\tilde u(x)-\ep,\tilde v(x)-\ep) ~ ~ {\rm for ~ }n\ge N_2, ~ x\in[0, X].\]
Taking $N=\max\{N_1,N_2\}$, we obtain that
  \[(\ud U_n(x),\ud V_n(x))\ge (\tilde u(x)-\ep,\tilde v(x)-\ep) ~ ~ {\rm for ~ }n\ge N, ~ x\in[0,\yy),\]
which, combined with $(\ud U_n(x),\ud V_n(x))\le (\tilde u(x),\tilde v(x))$ for $x\ge0$, proves the desired result.

(2) Since $J_2$ does not satisfy {\bf (J1)}. According to Theorem \ref{t3.1}, we have $\lim_{t\to\yy}s_2(t)/t=\yy$. Thus similar to the arguments in the proof of statement (1), we can derive that
\[\liminf_{t\to\yy}(u(t,x),v(t,x))\ge (\tilde u(x), \tilde v(x)) ~ ~ {\rm uniformly ~ in ~ }[0,ct]\]
for any $0\le c<\ud c_1$. Note that \eqref{x4.15} holds in the present situation. The statement (2) is proved.

(3)-(4)  Using Theorem \ref{t3.1} and the analogous lines as above, one can prove statements (3) and (4). The details are omitted here.
The proof is ended.
\end{proof}

\section{Discussion--biological significance of the conclusions}
\setcounter{equation}{0}

In this paper, we investigated a free boundary problem which describes the interactions of two mutually beneficial species in a one-dimensional habitat. In this model, one species occupies the spatial domain $[0,s_1(t)]$, while the other is distributed over $[0,s_2(t)]$. The two free boundaries $x=s_1(t)$ and $x=s_2(t)$,  describing the spreading fronts of two mutually beneficial species, respectively, may intersect each other as time evolves. Moreover, it is easy to verify that the nonlinear term $(f_1(u,v),f_2(u,v))$ is reducible at $(0,0)$. Hence it seems very difficult to understand the whole dynamics of this model, especially for the following problems.

{\bf (A)}\, When $u$ and $v$ spread successfully, the precise spreading speeds of $s_i$ for $i=1,2$ are left open in this paper. The difficulty comes from a lack of understanding of the corresponding semi-wave problem since \eqref{1.1} is reducible. But we conjecture that $u$ and $v$ will spreads at different speeds, and even one speed is finite and the other is infinite.

{\bf (B)}\, In Theorem \ref{t3.1}, we proved that if {\bf (J1)} holds for $J_i$ with $i=1,2$ and spreading happens for both $u$ and $v$, then the spreading speed of their spreading fronts $s_1(t)$ and $s_2(t)$ have different upper and lower bounds:
 \bess
&\ud c_1\le\dd\liminf_{t\to\yy}\frac{s_1(t)}{t}
\le\limsup_{t\to\yy}\frac{s_1(t)}{t}\le \bar{c}_1,\\
 & \ud c_2\le\dd\liminf_{t\to\yy}\frac{s_2(t)}{t}\le \limsup_{t\to\yy}\frac{s_2(t)}{t}\le \bar{c}_2.
  \eess

For each $c\in(0,\,\min\{\ud c_1,\ud c_2\})$, in the spatial domain $[0,ct]$ two species $u$ and $v$ help each other and survive together, and $(u,v)$ converges to a positive steady state $(\tilde u,\tilde v)$ uniformly in $[0,ct]$ as $t\to\yy$ (see Theorem \ref{t4.1}). However, for the case $\bar c_1<\ud c_2$, which can be guaranteed by letting $\mu_1$ small enough and $\mu_2$ sufficiently large, then for all large $t$, only the species $v$ survives in the spatial domain $[c_1t,c_2t]$ with $\bar c_1<c_1<c_2<\ud c_2$, i.e., the species $v$ will not receive the help of $u$. We guess that $v$ will converge to the relatively small positive steady state $\theta_{2\yy}$ uniformly in $[c_1t,c_2t]$ as $t\to\yy$. Similar situations will appear if $J_1$ satisfies {\bf (J1)}, but $J_2$ violates {\bf (J1)}.

We would like mention that if the above conjectures hold true, then the propagation phenomenon here will be analogous to those of the L-V competition model with different free boundaries, which has been systematically studied by the authors of \cite{DW2018,Wu2019,KSTD,DW2022}.

\end{document}